\theoremstyle{plain}
\newtheorem{theorem*}{Theorem}
\newtheorem{theorem}{Theorem}
\newtheorem{prp}[theorem]{Proposition}
\newtheorem{lemma}[theorem]{Lemma}
\newtheorem{corollary}[theorem]{Corollary}
\newtheorem{claim}[theorem]{Claim}
\numberwithin{equation}{section}
\newtheorem{theoremi}{Theorem}
\theoremstyle{definition}
\newtheorem{definition}[theorem]{Definition}
\newtheorem{notation}[theorem]{Notation}
\newtheorem{question}[theorem]{Question}
\theoremstyle{remark}
\newtheorem{remark}[theorem]{Remark}
\DeclareMathOperator{\CH}{\mathsf{CH}}
\DeclareMathOperator{\ZFC}{\mathsf{ZFC}}
\DeclareMathOperator{\Fin}{Fin}
\tikzset{
 symbol/.style={
 draw=none,
 every to/.append style={
 edge node={node [sloped, allow upside down, auto=false]{$#1$}}}
 }
}
\numberwithin{theorem}{section}
\newcommand{\bbN}{{\mathbb N}}
\newcommand{\cM}{{\mathcal M}}
\newcommand{\SI}{{\mathscr I}}
\newcommand{\SJ}{{\mathscr J}}
\newcommand{\wep}{\mathsf{wEP}}
\newcommand{\ncwep}{\mathsf{ncwEP}}
\newcounter{my_enumerate_counter}
\newcommand{\pushcounter}{\setcounter{my_enumerate_counter}{\value{enumi}}}
\newcommand{\popcounter}{\setcounter{enumi}{\value{my_enumerate_counter}}}
\newcommand{\norm}[1]{\left\lVert #1 \right\rVert}
\DeclareMathOperator{\OCA}{{\mathsf {OCA}}}
\newcommand{\supp}{\mathrm{supp}}
\DeclareMathOperator{\MA}{{\mathsf {MA}_{\aleph_1}}}
\newcommand{\cstar}{$\mathrm{C^*}$}
\newcommand{\N}{{\mathbb N}}
\title{The noncommutative weak Extension Principle}
\author{Alessandro Vignati}
\address[AV]{
 Institut de Math\'ematiques de Jussieu - Paris Rive Gauche (IMJ-PRG)\\
 Universit\'e Paris Cit\'e\\
 B\^atiment Sophie Germain\\
 8 Place Aur\'elie Nemours \\ 75013 Paris, France}
\email{vignati@imj-prg.fr}
\urladdr{http://www.automorph.net/avignati}
\author{Deniz Yilmaz}
\address[DY]{
Institut de Recherche en Informatique Fondamentale (IRIF)\\
 Universit\'e Paris Cit\'e\\
 B\^atiment Sophie Germain\\
 8 Place Aur\'elie Nemours \\ 75013 Paris, France}
\email{deniz.yilmaz@irif.fr}
\urladdr{https://denizyilmaz.fr}
\date{\today}
\begin{document}
\begin{abstract}
We introduce and study the noncommutative weak Extension Principle, a lifting principle aiming to characterise $^*$-homomorphisms between coronas of nonunital separable $\mathrm{C}^*$-algebras. While this principle fails if the Continuum Hypothesis is assumed, we show that this principle holds under mild forcing axioms such as the Open Colouring Axiom and Martin's Axiom. Further, we introduce and study the notion of nonmeagre ideals in multipliers and coronas of noncommutative $\mathrm{C}^*$-algebras, generalising the usual notion of nonmeagre ideals in $\mathcal P(\mathbb N)$.
\end{abstract}
\maketitle
\section{Introduction}
Given a \cstar-algebra $A$, its \emph{multiplier algebra} $\cM(A)$ is the unital \cstar-algebra such that when a unital $B$ contains $A$ as an essential ideal, the identity map on $A$ extends uniquely to a $^*$-homomorphism from $B$ to $\cM(A)$ (\cite[II.7.3.1]{Black:Operator}). $\mathcal M(A)$ is in a sense the largest unital \cstar-algebra in which $A$ sits densely. To be precise, if $A$ is nonunital and separable, the multiplier algebra $\mathcal M(A)$ is never separable in norm, yet its unit ball carries a Polish topology, the \emph{strict topology}, in which $A$ is dense. The \emph{corona algebra} $\mathcal Q(A)$ is the quotient $\mathcal M(A)/A$, and we always denote by $\pi_A\colon \mathcal M(A)\to\mathcal Q(A)$ the canonical quotient map. We refer to~\cite[II.7.3]{Black:Operator} and~\cite[\S 13]{Fa:Combinatorial} for a rigorous presentation and a variety of equivalent definitions of $\cM(A)$.

If $X$ is a locally compact topological space and $A = C_0(X)$, then $\cM(C_0(X)) \allowbreak \cong C(\beta X)$ and $\mathcal Q(A)=C(X^*)$, where $\beta X$ is the \v{C}ech--Stone compactification of $X$ and $X^*=\beta X\setminus X$ is its remainder.
Thanks to this correspondence, multipliers and coronas can be viewed as noncommutative analogues of \v{C}ech--Stone compactifications and remainders.

Multipliers and coronas are crucial objects in the modern development of \cstar-algebra theory, as they are capable of coding in a unique way certain asymptotic properties of \cstar-algebras. For example, they are indispensable tools in extension theory and the associated operator theory (after \cite{BrDoFi} and \cite{Arv:Notes}), and they are key in the study of lifting and perturbation properties (e.g., \cite{OlsenPed.Coronas}). Their structure as \cstar-algebras on their own right has been studied from multiple points of view. To mention a few notable ones, the work of Lin, Ng, and others (see for example \cite{lin1991simple}, \cite{lin2004simple}, or \cite{KaftalNgZhang.Minimal}) focused on the ideal structure of multipliers and coronas, and there has been significant work on purely operator algebraic properties such as proper infiniteness and real rank (\cite{KucNgPerera} and \cite{Lin2016corona}), and recently strong self-absorption (\cite{farah2025coronassa}).

Our main focus is on $^*$-homomorphisms between corona algebras of separable nonunital \cstar-algebras. Ideally, to understand a $^*$-homomorphism between $\Phi\colon \mathcal Q(A)\to\mathcal Q(B)$ one desires to find a \emph{well-behaved} lifting, i.e., a map $\tilde\Phi\colon \mathcal M(A)\to\mathcal M(B)$ making the following diagram commute:

\begin{center}
\begin{tikzpicture}[scale=0.7]
\node (A) at (0,0) {$\mathcal M(A)$};
\node (B) at (0,-2) {$\mathcal Q(A)$};
\node (C) at (4,0) {$\mathcal M(B)$};
\node (D) at (4,-2) {$\mathcal Q(B)$.};

\draw (A)edge[->] node [left] {$\pi_A$} (B);
\draw (A)edge[->] node [above] {$\tilde\Phi$} (C);
\draw (C)edge[->] node [right] {$\pi_B$} (D);
\draw (B)edge[->] node [above] {$\Phi$} (D);


\end{tikzpicture}
\end{center}

There are different notions of well-behavedness: one can require $\tilde\Phi$ to preserve some of the algebraic or the topological (in strict topology) properties of the multipliers involved. We investigate if, and when, well-behaved liftings exist.

A full classification of all $^*$-homomorphisms between coronas cannot escape set-theoretic considerations. In fact, results of Rudin (\cite{Ru}) on nontrivial autohomeomorphisms of $\omega^*$ (and dually automorphisms of $\ell_\infty/c_0$) and of Phillips and Weaver (\cite{PhWe:Calkin}) on the existence of outer automorphisms of the Calkin algebra $\mathcal Q(H)$, show that assuming the Continuum Hypothesis $\CH$ it is not possible to classify automorphisms of corona \cstar-algebras in any meaningful way. To add to this, Farah, Hirshberg and first-named author proved in \cite{farah2017calkin} that if one assumes $\CH$ then all \cstar-algebras of density at most $2^{\aleph_0}$ embeds into the Calkin algebra $\mathcal Q(H)$. This is the noncommutative analogue of (the dual of) Parovi\v{c}enko's theorem, asserting that under $\CH$ all abelian \cstar-algebras of density at most $2^{\aleph_0}$ embed into $\ell_\infty/c_0$. Such $^*$-homomorphisms constructed from $\CH$ are often intractable (that is, they are not trivial in any meaningful way). In general, under $\CH$ one can use model-theoretic saturation or diagonalisation techniques to produce intractable isomorphisms of corona \cstar-algebras. For more on this, see \cite[\S6]{CoronaRigiditySurvey}.

Here we focus on the situation assuming Forcing Axioms like the Open Colouring Axiom $\OCA$ and Martin's Axiom at level $\aleph_1$, $\MA$. The combination of these two axioms (both incompatible with $\CH$) gives the perfect context for stating and proving rigidity results for massive quotients arising in algebra, topology, and operator algebras. We refer to \cite{CoronaRigiditySurvey} for a thorough discussion on the applications of $\OCA$ and $\MA$ to the theory of liftings. 

In this article, we state the \emph{noncommutative weak Extension Principle}, denoted $\ncwep$, a lifting principle for $^*$-homomorphisms between coronas of separable nonunital \cstar-algebras asserting that these maps are tractable. In layman terms, the $\ncwep$ asserts that all $^*$-homomorphisms between coronas arise as the direct sum of two parts, one tractable (\emph{trivial}, one may say), and one with a large kernel. 

To define our extension principle, we take inspiration from the commutative setting. Farah in \cite{Fa:AQ} introduced the weak Extension Principle $\wep$ to fully characterise maps between \v{C}ech--Stone remainders of zero-dimensional topological spaces, and proved this principle holds assuming $\OCA$ and $\MA$ (see \cite[\S3]{Fa:AQ} and \cite{FaMcK:Homeomorphisms}). Following the new development of lifting techniques (under $\OCA$ and $\MA$) for $^*$-homomorphisms between corona \cstar-algebras (see \cite{vignati2018rigidity} and \cite{mckenney2018forcing}), the authors in \cite{VY:WEP} stated the $\wep$ for maps between remainders of not necessarily zero-dimensional spaces, and showed its validity under Forcing Axioms. These principles are usually stated in terms of topological spaces, and then brought in algebraic form via either Stone or Gel'fand's duality, when they become statements about homomorphisms of massive quotients of algebraic structures (Boolean or \cstar-algebras), and one can take advantage of the strong lifting theorems holding in presence of $\OCA$ and $\MA$. 

In case of noncommutative \cstar-algebras, we jump right away to the search of reasonable liftings for $^*$-homomorphisms between coronas. For this, we need to identify well-behaved maps. As already hinted above, there are two notions of triviality here: a strong algebraic one, isolated in \cite{vignati2018rigidity}, that asks for a lifting preserving as much algebra as possible
, and a (potentially) weaker, topological triviality, focusing on the strict topology of multiplier algebras. We call topologically trivial homomorphisms between coronas simply \emph{Borel} (see Definition~\ref{defin:TopTrivialNC}). All algebraically trivial $^*$-homomorphisms are Borel, but it is not known whether the converse holds\footnote{If we were brave, we would dare to conjecture it.}. Whether there is a Borel non algebraically trivial $^*$-homomorphism between coronas cannot be changed by reasonable forcings, as shown in \cite{vignati2018rigidity}; this statement is strongly tied to Ulam stability perturbation phenomena. We do not focus on this problem here, and stick to topological triviality from now on, but we refer to the end of \cite[\S5]{vignati2018rigidity}, \cite[\S3]{mckenney2018forcing} or \cite[\S5]{CoronaRigiditySurvey} for more information on this subject.

The following is our noncommutative extension principle.

\begin{definition}\label{defin:ncwepIntro}
Let $A$ and $B$ be separable nonunital \cstar-algebras. Let $\Phi\colon\mathcal Q(A)\to\mathcal Q(B)$ be a $^*$-homomorphism. We say that $\Phi$ satisfies the noncommutative weak Extension Principle, and write $\ncwep(\Phi)$, if the following holds: there exists a projection $p\in\mathcal Q(B)$ such that 
\begin{enumerate}[label=(wEP \roman*)]
\item $p$ commutes with the image of $\Phi$,
\item $\Phi_{1-p}\colon \mathcal Q(A)\to (1-p)\mathcal Q(B)(1-p)$ has nonmeagre kernel, and 
\item $\Phi_p\colon\mathcal Q(A)\to p\mathcal Q(B)p$ is Borel.
\end{enumerate}
The principle $\ncwep(\Phi)$ is described by the following diagram:
\begin{center}
\begin{tikzcd}
 & (1-p)\mathcal Q(B)(1-p) \arrow[dr] & \\
\mathcal Q(A) \arrow[dr, "\Phi_{p}" ] \arrow[ur, "\Phi_{1-p}"] \arrow[rr, "\Phi = \Phi_{1-p} \oplus \Phi_{p}"] & & \mathcal Q(B). \\
 & p\mathcal Q(B)p \arrow[ur] &
\end{tikzcd}
\end{center}

We say that the \emph{noncommutative weak Extension Principle} holds, and write $\ncwep$, if $\ncwep(\Phi)$ holds for every $^*$-homomorphism between pairs of coronas of separable nonunital \cstar-algebras.
\end{definition}

The $\ncwep$ cannot follow from $\ZFC$ alone, as nontrivial automorphisms of $\ell_\infty/c_0$ and outer automorphisms of the Calkin algebra do not satisfy the $\ncwep$. As these might exist (e.g., under $\CH$) for the $\ncwep$ to hold we need additional set theoretic assumptions. Further,  this is the best principle one can hope for, as the $\ZFC$ example of Dow (\cite{dow2014non}) of an everywhere nontrivial copy of $\omega^*$ inside $\omega^*$ shows that one cannot get rid of the nontrivial summand $\Phi_{1-p}$ (see Remark~\ref{remark:wep}).
The following is proved in \S\ref{S.ProofWEP}.

\begin{theoremi}\label{thm:mainIntro}
Assume $\OCA$ and $\MA$. Then the noncommutative weak Extension Principle $\ncwep$ holds.
\end{theoremi}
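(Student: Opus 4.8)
The plan is to reduce $\ncwep(\Phi)$ to a lifting statement for $^*$-homomorphisms between reduced products, to feed it into the $\OCA$ and $\MA$ lifting technology of \cite{vignati2018rigidity,mckenney2018forcing}, and to read off the projection $p$ from the resulting dichotomy. As a first step, fix quasicentral approximate identities $(e_n)$ of $A$ and $(e_n')$ of $B$ with $e_{n+1}e_n=e_n$, put $f_n=e_{n+1}-e_n$, and use the induced ``column'' presentations of the multiplier and corona algebras: these exhibit canonical unital copies of the reduced products $\prod_n f_nAf_n/\Fin$ and $\prod_n f_nBf_n/\Fin$ inside $\mathcal{Q}(A)$ and $\mathcal{Q}(B)$, with enough control on off-diagonal terms to recover the full coronas, so that $\Phi$ can be analysed block by block modulo the ideal. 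The role of this step is organisational: it puts the problem in the form to which the $\sigma$-lifting analysis (a metric Feferman--Vaught-type argument) of \cite{vignati2018rigidity} applies, and in which the combinatorics of nonmeagre ideals developed in the earlier sections can be used.

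The second step is the lifting input. Under $\OCA$ and $\MA$ --- using $\OCA$ in the form $\OCAinfty$ that drives the arguments of \cite{vignati2018rigidity,mckenney2018forcing} and their refinements in the earlier sections --- a $^*$-homomorphism $\Phi$ between coronas of separable nonunital \cstar-algebras admits \emph{local Borel liftings}: there is an increasing sequence of projections $q_n\in\mathcal{M}(B)$, with $q_n-q_{n-1}$ supported near the $n$-th block, together with compatible partial liftings of $\Phi$ that are strictly continuous, hence Borel in the sense of Definition~\ref{defin:TopTrivialNC}, on the corners on which they are defined. The obstruction to gluing these partial liftings into a single Borel lifting of $\Phi$ is an open colouring whose ``open'' part records the failure of local strict continuity near a point; applying $\OCA$ to it yields either a single $\sigma$-strictly-continuous lifting of $\Phi$ on a corner of ``full size'', or an uncountable $0$-homogeneous set of counterexamples to continuity concentrated on the complementary corner, which $\MA$ then lets us organise into a single coherent obstruction.

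From this one extracts $p$. If $\ker\Phi$ is nonmeagre, take $p=0$: then $\Phi_{1-p}=\Phi$ has nonmeagre kernel, $\Phi_p$ is the zero map, and (wEP~i)--(wEP~iii) hold trivially. Otherwise, let $p$ be the strict limit of the projections $q_n$ produced above, coming from the corner $p\mathcal{Q}(B)p$ that carries the $\sigma$-strictly-continuous lifting obtained from the dichotomy (so $p=1$ if that lifting is global). Because the $q_n$ are built from blocks adapted to a fixed, essentially block-diagonal lifting $\tilde\Phi$ of $\Phi$, every commutator $[p,\Phi(a)]$ vanishes in $\mathcal{Q}(B)$, which is (wEP~i); and restricting the $\sigma$-strictly-continuous lifting to the $p$-corner witnesses that $\Phi_p\colon\mathcal{Q}(A)\to p\mathcal{Q}(B)p$ is Borel, which is (wEP~iii). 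When $p=1$, $\Phi$ itself is Borel and $\ker\Phi_0=\mathcal{Q}(A)$ is (trivially) nonmeagre.

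The remaining point, (wEP~ii) --- that $\ker\Phi_{1-p}=\{a\in\mathcal{Q}(A):(1-p)\Phi(a)(1-p)=0\}$ is a nonmeagre ideal of $\mathcal{Q}(A)$ --- is where the notion of nonmeagre ideals in multipliers and coronas introduced in this paper is needed, and it is the main obstacle. One must convert the $\OCA$-obstruction living on the $(1-p)$-corner, an uncountable and ``continuity-free'' family of witnesses, into a genuinely non-meagre (not merely nonzero, or strictly dense) set of $a$ with $(1-p)\Phi(a)(1-p)=0$, via a Baire-category computation in the strict topology of the unit ball of $\mathcal{M}(A)$; the $\MA$-diagonalisation of the previous step is exactly what upgrades this family from uncountable to non-meagre. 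This phenomenon has no analogue in the commutative setting of \cite{Fa:AQ,VY:WEP}, where $p$ is a central clopen projection that commutes with everything automatically; it is precisely the commutation requirement (wEP~i), forcing $p$ to be built as a block-adapted strict limit rather than by simply discarding a bad set of coordinates, that must be dovetailed with the non-meagreness of the complementary kernel. Finally, the reduced-product case is transferred back to arbitrary separable nonunital $A$ and $B$ through the column presentations of the first step.
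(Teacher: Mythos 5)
Your proposal has the right general flavour (block decompositions, the OCA/MA lifting technology of \cite{vignati2018rigidity,mckenney2018forcing}, a projection $p$ built from block images, a Borel lifting on the $p$-corner), but the logical architecture differs from what actually works, and the genuinely hard steps are asserted rather than proved. First, you posit honest projections $q_n\in\mathcal M(B)$ supported near the blocks whose strict limit is $p$. In a general corona there is no reason such projections exist; what the lifting theorem (applied to the completely positive order-zero map $\chi_S\mapsto\Phi(\pi_A(q_S))$) actually gives is positive contractions $r_n\in B$ with $r_nr_m=0$ for $|n-m|\geq 2$, lifting $\Phi$ only on an \emph{everywhere nonmeagre ideal} $\SI\subseteq\mathcal P(\N)$, and proving that $p=\pi_B(\sum_n r_n)$ is even a projection is itself a nontrivial lemma using $\SI$. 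Second, (wEP i) is claimed to hold ``because the blocks are adapted to an essentially block-diagonal lifting $\tilde\Phi$''; but a set-theoretic lift of $\Phi$ has no a priori block-diagonality, and the commutation of $p$ with the range requires real work: one uses nonmeagreness of $\SI$ to produce sets $T\in\SI$ with $q_Ta=0$ (or $q_T$ commuting with $a$) for elements $a$ supported on sparse sequences, and then the stratification of an arbitrary multiplier, modulo $A$, into four sparsely supported summands.

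Third, the dichotomy you describe (``either a $\sigma$-strictly-continuous lifting on a full-size corner, or an uncountable homogeneous obstruction that MA upgrades to a nonmeagre kernel on the complementary corner'') misrepresents both halves. The nonmeagre kernel of $\Phi_{1-p}$ is not extracted from a failed-continuity family by a Baire-category computation in the strict unit ball of $\mathcal M(A)$ --- note that ``nonmeagre kernel'' here refers to the combinatorial Definition~\ref{def:noncommnonmeagre} (good approximate identities and interval partitions), not Baire-category nonmeagreness of a subset of $\mathcal M(A)_{\leq1}$ --- and it falls out directly from the construction: for $S\in\SI$ one has $\Phi(\pi_A(q_S))=\pi_B(r_S)$, which lies under $p$, so $\pi_A(q_S)\in\ker\Phi_{1-p}$, and everywhere-nonmeagreness of $\SI$ yields the required combinatorial condition. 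Finally, Borelness of $\Phi_p$ is not just ``restrict the lifting to the $p$-corner'': one must first remove the ideal $\SI$ (using that Borel nonmeagre ideals containing $\Fin$ are improper), then invoke the coordinate-respecting lifting theorem of \cite{TrivIsoMetric} on each block-diagonal subalgebra $\mathcal F(\bar I)$ separately, then run a second OCA colouring over the $\sigma$-directed poset of interval partitions (with $\mathfrak b>\omega_1$ ruling out uncountable $0$-homogeneous sets, so there is no genuine ``obstruction branch'') to coherently uniformise these local liftings, and finally exhibit $\Gamma_{\Phi_p}$ as simultaneously analytic and coanalytic. Your step 2 essentially assumes this output as its input, so as written the proposal does not constitute a proof.
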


The proof of Theorem~\ref{thm:mainIntro} relies on powerful lifting theorems proved by McKenney and the first author (\cite{mckenney2018forcing}) and the first author (\cite{vignati2018rigidity}), and follows the strategy employed in the commutative setting (see \cite{VY:WEP}). The main difficulties arise in that the noncommutative setting is (as one can expect) technically more demanding than the topological one, and every step requires involved computations. Given a $^*$-homomorphism between coronas $\Phi\colon\mathcal Q(A)\to \mathcal Q(B)$ we at first isolate the projection $p$ required by the $\ncwep$ and prove its main properties (e.g., that $p$ is a projection, that it commutes with the range of the starting $^*$-homomorphisms, and that $\Phi_{1-p}$ has large kernel). This part of the work requires a lifting theorem proved in \cite{mckenney2018forcing}. We then focus on the \emph{trivial} summand $\Phi_p$, aiming to show it is Borel. To do this, we follow closely the strategy of \cite{vignati2014algebra}, yet recent work of De Bondt and the first author (\cite{TrivIsoMetric}) comes in help, as it allows us to skip certain technical steps.

In \S\ref{S.LargeIdeals} we focus on nonmeagre ideals in multipliers and coronas (see Definition~\ref{def:noncommnonmeagre}), those that can arise as kernels of the nontrivial summand $\Phi_{1-p}$ given by the $\ncwep$. These generalise the usual notion of nonmeagre ideals in $\mathcal P(\mathbb N)$ as well as nowhere density in topology (see Proposition~\ref{prop:nonmeagreid}), and thus are a strengthening of essential ideals. Notably, we prove that such ideals cannot exist in coronas of stable algebras (Proposition~\ref{prop:stablenonmeagre}), and that some of the most studied ideals in coronas (as those constructed by Lin in \cite{lin1991simple}, or those arising from traces) are meagre (if improper). In fact, the question of whether nonmeagre proper ideals in coronas of simple \cstar-algebras might exist remains open (see Question~\ref{ques:largesimple}). As a consequence, we obtain a substantial generalisation of the main result of \cite{vaccaro2019trivial} (Theorem 1.3 in there), which characterised endomorphisms of the Calkin algebra under Forcing Axioms.

\begin{theoremi}
Assume $\OCA$ and $\MA$. Let $A$ and $B$ be separable nonunital \cstar-algebras, and assume that $A$ is stable. Then all $^*$-homomorphisms from $\mathcal Q(A)$ to $\mathcal Q(B)$ are Borel.
\end{theoremi}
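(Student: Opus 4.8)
The plan is to obtain this as a short consequence of Theorem~\ref{thm:mainIntro} together with the nonexistence of proper nonmeagre ideals in coronas of stable algebras (Proposition~\ref{prop:stablenonmeagre}). Fix a $^*$-homomorphism $\Phi\colon\mathcal Q(A)\to\mathcal Q(B)$. Under $\OCA$ and $\MA$, Theorem~\ref{thm:mainIntro} furnishes a projection $p\in\mathcal Q(B)$ such that $p$ commutes with the image of $\Phi$, the map $\Phi_{1-p}\colon\mathcal Q(A)\to(1-p)\mathcal Q(B)(1-p)$ has nonmeagre kernel, and $\Phi_p\colon\mathcal Q(A)\to p\mathcal Q(B)p$ is Borel. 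So the only thing to establish is that in this situation the summand $\Phi_{1-p}$ must vanish, leaving $\Phi=\Phi_p$.

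To see this, I would first note that $\ker\Phi_{1-p}$ is a norm-closed, two-sided ideal of $\mathcal Q(A)$, and by (wEP ii) it is nonmeagre in the sense of Definition~\ref{def:noncommnonmeagre}. Since $A$ is stable, Proposition~\ref{prop:stablenonmeagre} says $\mathcal Q(A)$ carries no proper nonmeagre ideal; hence $\ker\Phi_{1-p}=\mathcal Q(A)$, i.e.\ $\Phi_{1-p}=0$. Using that $p$ commutes with the range of $\Phi$, for every $x\in\mathcal Q(A)$ one has $\Phi(x)=\Phi(x)p+\Phi(x)(1-p)$, where $\Phi(x)p=p\Phi(x)p=\Phi_p(x)$ and $\Phi(x)(1-p)=(1-p)\Phi(x)(1-p)=\Phi_{1-p}(x)=0$. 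Thus $\Phi=\Phi_p$ as maps into $\mathcal Q(B)$.

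It then remains to transfer ``Borel'' from $\Phi_p$, viewed as a map into the corner $p\mathcal Q(B)p$, to $\Phi$, viewed as a map into $\mathcal Q(B)$; this is where the only genuine care is needed. The proof of Theorem~\ref{thm:mainIntro} produces $p$ as the image under $\pi_B$ of a projection $q\in\mathcal M(B)$, so $p\mathcal Q(B)p$ is canonically the corona of the hereditary subalgebra $qBq$, with the strict/Polish structure inherited from that of $\mathcal M(B)$; compression by $q$ is a strictly-to-strictly continuous (in particular Borel) map on the unit ball of $\mathcal M(B)$, and the inclusion $\mathcal M(qBq)\hookrightarrow\mathcal M(B)$ is strictly continuous on bounded sets. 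Composing a Borel lift of $\Phi_p$ with this inclusion yields a Borel lift of $\Phi$, so $\Phi$ is Borel in the sense of Definition~\ref{defin:TopTrivialNC}.

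The main obstacle, such as it is, does not lie in this final deduction but in the two black boxes it rests on: Theorem~\ref{thm:mainIntro}, i.e.\ the full strength of the $\ncwep$ under $\OCA$ and $\MA$, and Proposition~\ref{prop:stablenonmeagre}, i.e.\ that stability forbids proper nonmeagre ideals in the corona. Once these are in place the present statement is essentially a bookkeeping exercise, and in particular it uses neither unitality nor simplicity of $A$ or $B$, only the stability of $A$.
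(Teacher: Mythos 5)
Your proposal is correct and follows essentially the same route as the paper, which deduces the theorem from the $\ncwep$ (Theorem~\ref{thm:mainIntro}) together with the absence of proper nonmeagre ideals for stable $A$ (Proposition~\ref{prop:stablenonmeagre}, via Proposition~\ref{prop:nononmeagre}), so that $\ker\Phi_{1-p}$ being nonmeagre forces $\Phi_{1-p}=0$ and $\Phi=\Phi_p$ is Borel. The only quibble is your claim that the proof of Theorem~\ref{thm:mainIntro} lifts $p$ to a projection $q\in\mathcal M(B)$: in the paper $p=\pi_B(r)$ where $r=\sum_n r_n$ is merely a positive contraction, but this does not affect your final transfer of Borelness, which in the paper is handled directly by Lemma~\ref{lem:Borel}.
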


Among the consequence of \cite[Theorem 1.3]{vaccaro2019trivial} one has the (under suitable axioms) the class of \cstar-algebras embedding in the Calkin algebra is not closure under tensor product and countable unions (Theorem 1.2 in \cite{vaccaro2019trivial}). We prove the correspondent of the first result in Corollary~\ref{cor:tensors}, while the second one relies on Ulam stability considerations for maps between matrix algebras that are not necessarily true for $^*$-homomorphisms between arbitrary separable \cstar-algebras (once again, see \cite[\S5]{vignati2018rigidity}, \cite[\S3]{mckenney2018forcing} or \cite[\S5]{CoronaRigiditySurvey} for more on this). Even though we have some partial results extending Theorem 1.2(2) \cite{vaccaro2019trivial} to general coronas, we currently do not have neat nor sharp statements.

The article ends with remarks on dimension phenomena (\S\ref{S.Dimension}).

\subsection*{Acknowledgements}
We thank Ilijas Farah for useful comments and conversations.  AV is partially funded by the Institut Universitaire de France (IUF) and by the ANR JCJC (Jeunes Chercheuses et Jeunes Chercheurs) project ROAR.

\section{The noncommutative weak Extension Principle}\label{s.Preliminaries}

We start by recalling the weak Extension Principle in the commutative setting.
\begin{definition}
Let $X$ and $Y$ be locally compact noncompact second countable topological spaces. We say that $X$ and $Y$ satisfy the weak Extension Principle, and write $\wep(X,Y)$, if the following happens:

For every continuous map $F\colon X^*\to Y^*$ there exists a partition into clopen sets $X^* =U_0\cup U_1$ and an open with compact closure $V_X\subseteq X$ such that
\begin{itemize}
 \item $F[U_0]$ is nowhere dense in $Y^*$, and
 \item there is a continuous proper function $G\colon X\setminus V_X\to Y$ such that $\beta G\restriction U_1=F\restriction U_1$.
\end{itemize}

By $\wep$ we denote the statement ``$\wep(X,Y)$ holds whenever $X$ and $Y$ are locally compact noncompact second countable spaces".
\end{definition}

Both in its original version (\cite{Fa:AQ}) and its generalisation outside the zero-dimensional case (\cite{VY:WEP}), the principle was stated for maps between powers of \v{C}ech--Stone remainders, but we decided to stick to simplest case for clarity, see \S\ref{S.Dimension} for more on this.

When trying to extend the $\wep$ to the noncommutative setting, we want to translate topological terms to algebraic ones. We write $\pi_X$ (resp. $\pi_Y$) for the canonical quotient map $C_b(X)\to C(X^*)$ (resp. $C_b(Y)\to C(Y^*)$).

We write $\pi_X$ (resp. $\pi_Y$) for the canonical quotient map $C_b(X)\to C(X^*)$ (resp. $C_b(Y)\to C(Y^*)$).

\begin{lemma}\label{lemma:trivial}
Let $X$ and $Y$ be locally compact noncompact second countable topological spaces, and let $\Phi\colon C(Y^*)\to C(X^*)$ be a unital $^*$-homomorphism with dual $F\colon X^*\to Y^*$. The following are equivalent:
\begin{enumerate}
\item there is an open set with compact closure $V_X\subseteq X$ and a continuous proper function $G\colon X\setminus V_X\to Y$ such that $\beta G\restriction X^*=F$;
\item there are positive contractions $a\in C_b(Y)$ and $b\in C_b(X)$ such that $1-b\in C_0(X)$, $1-a\in C_0(Y)$ and a nondegenerate $^*$-homomorphism $\overline{aC_0(Y)a} \to \overline{bC_0(X)b}$ which extends to a $^*$-homomorphism $\tilde\Phi\colon \overline{aC_b(Y)a} \allowbreak \to \overline{bC_b(X)b}$ such that for all $f\in \overline{aC_b(Y)a}$ we have that $
\Phi(\pi_Y(f))=\pi_X(\tilde\Phi(f))$.
\end{enumerate}
\end{lemma}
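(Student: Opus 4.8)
The plan is to dualise both conditions via Gelfand's theorem and reduce the equivalence to the standard correspondence between nondegenerate $^*$-homomorphisms of commutative $\mathrm{C}^*$-algebras and proper continuous maps, together with the behaviour of $\beta(\cdot)$ on cocompact subsets. I will use the following translation of the algebraic data throughout. If $b\in C_b(X)=C(\beta X)$ is a positive contraction with $1-b\in C_0(X)$, then $b\equiv 1$ on $X^*$, so $K_b:=\{b=0\}$ is a compact subset of $X$; setting $W:=X\setminus K_b$ one has $\overline{bC_0(X)b}=\{f\in C_0(X):f|_{K_b}=0\}\cong C_0(W)$, while $\overline{bC_b(X)b}=\{f\in C_b(X):f|_{K_b}=0\}$ is a closed ideal of $C_b(X)$ containing $C_0(W)$, on which $\pi_X$ restricts to a surjection onto $C(X^*)$ with kernel $C_0(W)$ (surjectivity because $\pi_X(b)=1$). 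Symmetrically $a$ yields a compact $K_a:=\{a=0\}\subseteq Y$ and $Y':=Y\setminus K_a$.

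\textbf{$(2)\Rightarrow(1)$.} The nondegenerate map $\overline{aC_0(Y)a}\cong C_0(Y')\to C_0(W)\cong\overline{bC_0(X)b}$ is $g\mapsto g\circ H$ for a unique proper continuous $H\colon W\to Y'$. Since $\overline{aC_0(Y)a}$ is an ideal of $\overline{aC_b(Y)a}$, for $f\in\overline{aC_b(Y)a}$ the relations $\tilde\Phi(f)\cdot\phi_0(g')=\phi_0(fg')$ ($g'\in\overline{aC_0(Y)a}$) together with nondegeneracy force $\tilde\Phi(f)|_W=f\circ H$. Taking $f=a$ and using that $\Phi$ is unital, $\pi_X\tilde\Phi(a)=\Phi\pi_Y(a)=\Phi(1)=1$, so $a\circ H\to1$ along $W$ as one approaches $X^*$; taking $f=g'\in\overline{aC_0(Y)a}$ gives $g'\circ H\to0$ there. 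Fix $\xi\in X^*$. From $\tilde\Phi(aga)|_W=(a\circ H)^2(g\circ H)$ and $a\circ H\to1$ it follows that $\lim_{w\to\xi}g(H(w))$ exists for every $g\in C_b(Y)$, so the pushforward along $H$ of the neighbourhood filter of $\xi$ converges in $\beta Y$ to some $\eta_\xi$, which lies in $Y^*$ (it kills every $g'\in\overline{aC_0(Y)a}$ but not $a$). The compatibility equation applied to $aga$ now reads $g(\eta_\xi)=\pi_Y(g)(F(\xi))$ for all $g\in C_b(Y)$, hence $\eta_\xi=F(\xi)$, i.e.\ $\lim_{w\to\xi,\,w\in W}H(w)=F(\xi)$. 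Finally pick a relatively compact open $V_X$ with $K_b\subseteq V_X$: then $Z:=X\setminus V_X\subseteq W$ is closed (hence $C^*$-embedded) and cocompact in $X$, so $Z^*=X^*$, and $G:=H|_Z$ followed by $Y'\hookrightarrow Y$ is continuous with $\beta G(\xi)=\lim_{z\to\xi,\,z\in Z}H(z)=F(\xi)$ (since $Z\subseteq W$ and $\xi\in\overline Z$). As $F(X^*)\subseteq Y^*$, $G$ is proper; this is $(1)$.

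\textbf{$(1)\Rightarrow(2)$.} Let $V_X$ and $G\colon Z\to Y$ be as in $(1)$, $Z:=X\setminus V_X$. Being proper, $G$ is closed, so $L:=G(\partial V_X)$ is compact. Fix a relatively compact open $N\subseteq Y$ with $L\subseteq N$ and set
\[
 Y':=Y\setminus\overline N,\qquad P:=G^{-1}(\overline N),\qquad W:=X\setminus(\overline{V_X}\cup P).
\]
Then $P$ is compact and $\partial V_X\subseteq P$, so $\overline{V_X}\cup P$ is compact and $W$ is cocompact open with $W\subseteq Z$. The key point is that $G|_W$ maps into $Y'$ and is \emph{proper} as a map $W\to Y'$: for compact $C\subseteq Y'$ one has $(G|_W)^{-1}(C)=G^{-1}(C)$, this set being automatically disjoint from $P=G^{-1}(\overline N)$ and from $\overline{V_X}$ (as $G^{-1}(C)\cap\overline{V_X}=G^{-1}(C)\cap\partial V_X\subseteq P$), hence compact. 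Now pick, by Urysohn, positive contractions $b\in C_b(X)$, $a\in C_b(Y)$ equal to $1$ on $X^*$, $Y^*$ with $\{b=0\}=\overline{V_X}\cup P$ and $\{a=0\}=\overline N$; then $1-b\in C_0(X)$, $1-a\in C_0(Y)$, and the corners are as in the first paragraph with exactly this $W$ and $Y'$. Let $\phi_0$ be the nondegenerate $^*$-homomorphism $\overline{aC_0(Y)a}\cong C_0(Y')\to C_0(W)\cong\overline{bC_0(X)b}$ dual to $G|_W$, and for $f\in\overline{aC_b(Y)a}=\{f\in C(\beta Y):f|_{\overline N}=0\}$ set $\tilde\Phi(f):=f\circ G$ on $W$ and $\tilde\Phi(f):=0$ on $\overline{V_X}\cup P$. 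Continuity on $X$ can fail only on $\partial(\overline{V_X}\cup P)$, and there it holds because each such boundary point lies in $Z$ and is carried by $G$ into $N\subseteq\{a=0\}$, so $f\circ G\to0$; thus $\tilde\Phi(f)\in\overline{bC_b(X)b}$. Clearly $\tilde\Phi$ is a $^*$-homomorphism extending $\phi_0$, and for $\xi\in X^*$ and $f\in\overline{aC_b(Y)a}$, using $\beta G|_{X^*}=F$ and $F(\xi)\in Y^*$,
\[
 \pi_X\tilde\Phi(f)(\xi)=\lim_{w\to\xi,\,w\in W}f(G(w))=f(\beta G(\xi))=f(F(\xi))=\pi_Y(f)(F(\xi))=\big(\Phi\pi_Y(f)\big)(\xi).
\]
This gives $(2)$.

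\textbf{Main obstacle.} The genuinely delicate steps are the identity $\tilde\Phi(f)|_W=f\circ H$ in $(2)\Rightarrow(1)$, which is what converts the abstract lifting $\tilde\Phi$ into an honest continuous map, and the passage in $(1)\Rightarrow(2)$ from a proper map defined on the \emph{closed} cocompact set $Z$ to one on an \emph{open} cocompact set: one cannot merely restrict $G$ to $\operatorname{int}(Z)$, since properness is destroyed near $\partial V_X$, and this is precisely why the target must be shrunk to $Y'=Y\setminus\overline N$ and the domain cut to $W=X\setminus(\overline{V_X}\cup G^{-1}(\overline N))$. Everything else is routine Gelfand duality and bookkeeping with \v{C}ech--Stone remainders of cocompact subsets.
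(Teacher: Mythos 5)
The paper states Lemma~\ref{lemma:trivial} without proof, treating it as a routine Gel'fand-duality translation, so there is no argument of the authors to compare against; your write-up supplies exactly that translation and, as far as I can check, it is correct. Both genuinely delicate points are handled properly: in $(2)\Rightarrow(1)$ you recover the map $H$ on $W=X\setminus\{b=0\}$ from the ideal relation $\tilde\Phi(f)\phi_0(g')=\phi_0(fg')$ plus nondegeneracy, and then identify the boundary limits with $F$ via the elements $aga$ and $\pi_Y(a)=1$; in $(1)\Rightarrow(2)$ you correctly notice that one cannot just restrict $G$ to an open cocompact set, and you restore properness by excising $G^{-1}(\overline N)$ and shrinking the codomain to $Y\setminus\overline N$. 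Three small presentational remarks: compactness of $L=G(\partial V_X)$ needs only continuity of $G$ and compactness of $\partial V_X$, not closedness of $G$; the ``Urysohn'' step really uses that in a locally compact second countable (hence metrizable) space every compact set is the exact zero set of a function that is $1$ outside a compact neighbourhood, which is slightly more than Urysohn's lemma verbatim; and the final step of $(2)\Rightarrow(1)$ rests on the standard equivalence ``$G$ proper $\Leftrightarrow$ $\beta G$ maps the remainder into the remainder'', which you invoke without justification --- it is true and classical, but a one-line proof or a reference would make the argument self-contained.
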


Homomorphisms as in Lemma~\ref{lemma:trivial} as known as algebraically trivial. This is the best one can ask for, and we cannot ask for the well-behaved lift $\tilde\Phi$ to be a $^*$-homomorphism, even in simple cases. Let for example $X=\mathbb R$ and $Y=(-\infty,0]\cup [1,\infty)$. $X^*$ and $Y^*$ are homeomorphic via the identity map $F$. On the other hand, there is no $^*$-homomorphism $C_0(Y)\to C_0(X)$ inducing the dual of $F$ as in Lemma~\ref{lemma:trivial}.

\vspace{5pt}

If $X$ is a locally compact noncompact topological space and $p\in C(X^*)$ is a projection, there is an open set $U_p\subseteq X$ such that $U_p^*\subseteq X^*$ is the clopen set of which $p$ is the characteristic function of. In this case, $pC(X^*)p=C(U_p^*)$ and $(X\setminus U_p)^*=X^*\setminus U_p^*=U_{1-p}^*$. If $\Phi\colon C(Y^*)\to C(X^*)$ is a $^*$-homomorphism and $p\in C(X^*)$ is a projection, we write $\Phi_p\colon C(Y^*)\to C(U_p^*)$ for the cut-down of $\Phi$ by $p$, that is, $
\Phi_p=p\Phi p$.

Altogether, we have the following operator algebraic description of the $\wep$.

\begin{prp}\label{prop:algWEP}
Let $X$ and $Y$ be two locally compact noncompact second countable topological spaces. The principle $\wep(X,Y)$ is equivalent to the following statement: 

\noindent For every $*$-homomorphism $\Phi \colon C(Y^*)\to C(X^*)$ there exists a projection $p \in C(X^*)$ with associated clopen set $U_p^*$ such that 
\begin{itemize}
\item $\Phi_{1-p}\colon C(Y^*)\to C(U_{1-p}^*)$ has essential kernel, and
\item $\Phi_p\colon C(Y^*)\to C(U_p^*)$ is trivial.
\end{itemize}.
With $\Phi_p=p\Phi p$, we have the following diagram.
\begin{center}
\begin{tikzpicture}[scale=0.7]
\node (A) at (0,2) {$C(U_{1-p}^*)$};
\node (B) at (4,0) {$C(X^*)$};
\node (C) at (-4,0) {$C(Y^*)$};
\node (D) at (0,-2) {$C(U_p^*)$};
\node (E) at (-2.5,1.5) {$\Phi_{1-p}$};
\node (F) at (-2.5,-1.5) {$\Phi_p$};
\draw (C)edge[->] node [above] {$\Phi=\Phi_{1-p}\oplus\Phi_p$} (B);
\draw (C) edge[->] node [left] {} (A);
\draw (A) edge[->] node [left] {} (B);
\draw (C) edge[->] node [left] {} (D);
\draw (D) edge[->] node [below] {} (B);
\end{tikzpicture}
\end{center}
\end{prp}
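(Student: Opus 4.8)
The plan is to prove the equivalence by a dictionary translation under Gel'fand duality, with Lemma~\ref{lemma:trivial} supplying the only non-formal ingredient. First I would set up the dictionary. By Gel'fand duality a unital $^*$-homomorphism $\Phi\colon C(Y^*)\to C(X^*)$ is the same datum as a continuous map $F\colon X^*\to Y^*$; we may assume $\Phi$ is unital, the general case reducing to this by corestricting to $\Phi(1)C(X^*)\Phi(1)$ and absorbing $1-\Phi(1)$ into the complementary projection. As recalled in the text, a projection $p\in C(X^*)$ is the characteristic function of a clopen set $U_p^*\subseteq X^*$ arising from an open $U_p\subseteq X$, with $U_{1-p}^*=X^*\setminus U_p^*$; writing $U_0:=U_{1-p}^*$ and $U_1:=U_p^*$, this is exactly a clopen partition $X^*=U_0\cup U_1$, and every clopen partition arises this way. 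Under this dictionary the cut-downs correspond to restrictions of $F$: the map $\Phi_p=p\Phi p\colon C(Y^*)\to pC(X^*)p=C(U_p^*)$ is dual to $F\restriction U_1$, and $\Phi_{1-p}\colon C(Y^*)\to C(U_{1-p}^*)$ is dual to $F\restriction U_0$.

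The argument then rests on two elementary equivalences. The first is that $\Phi_{1-p}$ has essential kernel if and only if $F[U_0]$ is nowhere dense in $Y^*$: indeed $\ker\Phi_{1-p}=\{g\in C(Y^*): g\restriction\overline{F[U_0]}=0\}$ is, under Gel'fand duality, the ideal of functions vanishing on the closed set $\overline{F[U_0]}$, and such an ideal is essential in $C(Y^*)$ precisely when $Y^*\setminus\overline{F[U_0]}$ is dense, i.e.\ when $\overline{F[U_0]}$ has empty interior. The second is that $\Phi_p$ is trivial, in the sense of Lemma~\ref{lemma:trivial}, if and only if there are an open $V_X\subseteq X$ with compact closure and a continuous proper $G\colon X\setminus V_X\to Y$ with $\beta G\restriction U_1=F\restriction U_1$. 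For this I would apply Lemma~\ref{lemma:trivial} not to $(X,Y)$ but to $(U_p,Y)$ --- or, more conveniently, to $(X_1,Y)$ where $X=X_0\sqcup X_1\sqcup K$ is a decomposition into disjoint open pieces and a compact set with $X_1^*=U_p^*$. Since the dual of $\Phi_p$, regarded as a map into $C(U_p^*)$, is $F\restriction U_1$, clause~(1) of Lemma~\ref{lemma:trivial} for $\Phi_p$ is exactly the assertion that $F\restriction U_1$ equals $\beta G_1\restriction U_1$ for some continuous proper $G_1$ defined on $X_1$ off an open set with compact closure; transferring this datum between the piece $X_1$ and the ambient space $X$ gives the form required by $\wep(X,Y)$.

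With these two equivalences in hand, both implications are immediate and symmetric. If the operator-algebraic statement holds for $\Phi$, take the witnessing projection $p$ and set $U_0:=U_{1-p}^*$, $U_1:=U_p^*$; the first equivalence makes $F[U_0]$ nowhere dense, the second produces $V_X$ and $G$, and hence $\wep(X,Y)$ holds for the dual map $F$. Conversely, given $\wep(X,Y)$ applied to the $F$ dual to $\Phi$, let $p$ be the projection whose clopen set is $U_1$; the two equivalences then read the two bullets of $\wep(X,Y)$ as ``$\Phi_{1-p}$ has essential kernel'' and ``$\Phi_p$ is trivial''. Commutativity of $C(X^*)$ makes $p$ commute with the range of $\Phi$ automatically, so $\Phi=\Phi_{1-p}\oplus\Phi_p$ is the decomposition drawn in the diagram.

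The only step I expect to require genuine care is the bookkeeping in the second equivalence: triviality of the cut-down $\Phi_p$ is intrinsically a statement about the space $U_p$, so one must check that it lines up cleanly with the global datum $V_X\subseteq X$, $G\colon X\setminus V_X\to Y$ demanded by $\wep(X,Y)$, and that it is independent of the choice of open set realizing the clopen set $U_p^*$ (this is where passing to a decomposition $X=X_0\sqcup X_1\sqcup K$ helps). One should also dispose separately of the degenerate cases $p=0$, where $\Phi_p=0$ is vacuously trivial, $\Phi_{1-p}=\Phi$, and $U_1=\emptyset$, and $p=1$, where $\Phi_{1-p}=0$ has full and hence essential kernel, $\Phi_p=\Phi$, and $U_0=\emptyset$; in each of these the corresponding clause of $\wep(X,Y)$ is vacuous.
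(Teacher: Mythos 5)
Your proposal is correct and follows essentially the same route the paper intends for this proposition, which it treats as an immediate translation: Gel'fand duality, the dictionary between projections $p\in C(X^*)$ and clopen sets $U_p^*$ (so that cut-downs correspond to restrictions of $F$), the observation that $\ker\Phi_{1-p}$ is essential exactly when $\overline{F[U_0]}$ has empty interior, and Lemma~\ref{lemma:trivial} to identify triviality of $\Phi_p$ with the existence of the proper map $G$. The one point you flag --- reconciling the intrinsically local triviality datum on $U_p$ with the globally defined $G\colon X\setminus V_X\to Y$ in the statement of $\wep(X,Y)$, and independence of the choice of $U_p$ --- is precisely the bookkeeping the paper also leaves implicit, so your treatment matches (and slightly elaborates on) the intended argument.
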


We now move to the noncommutative setting, where \v{C}ech--Stone compactification and remainder corresponds to multiplier and corona algebras. While referring to \cite[\S II.7]{Black:Operator} for a detailed treatment of these objects, we record here a few useful facts and definitions.

\begin{itemize}
\item If $A$ is a \cstar-algebra, $\mathcal M(A)$ is its multiplier algebra and $\mathcal Q(A):=\mathcal M(A)/A$ is its corona algebra. In case $A$ is unital then $\mathcal M(A) = A$, therefore in what follows we shall assume that $A$ is nonunital. By 
$\pi_A\colon\mathcal M(A)\to \mathcal Q(A)$ we denote the canonical quotient map. For $a,a'\in\mathcal M(A)$, we write $a=_A a'$ if $a-a'\in A$. 
\item In case $A$ is abelian, meaning that $A=C_0(X)$ for some locally compact noncompact topological space $X$, then $\mathcal M(A)=C(\beta X)$ and $\mathcal Q(A)=C(X^*)$. 
\item In case $A=\bigoplus A_n$ for some sequence of unital \cstar-algebras $(A_n)$, then $\mathcal M(A)=\prod A_n$. The corona algebra $\prod A_n/\bigoplus A_n$ is known as the reduced product of the sequence $(A_n)$.
\item The algebra $\mathcal M(A)$ is never separable (unless $A$ is unital), nevertheless it often carries a separable topology. Let $A$ be a separable and nonunital \cstar-algebra, and let $(e_n)\subseteq A$ be an increasing approximate identity of positive contractions for $A$ (this always exists, see \cite{Pede:Corona}). The strict topology on $\mathcal M(A)$ is the topology induced by the seminorms
\[
\ell_n=\norm{ae_n} \text{ and }r_n=\norm{e_na},
\]
for $n\in\bbN$. This is a separable topology, which turns $\mathcal M(A)$ into a standard Borel space and its unit ball, $\mathcal M(A)_{\leq1}$, into a Polish space, in which the unit ball of $A$ sits densely as a Borel subset.
\end{itemize}

The following is the topological notion of triviality we shall focus on.

\begin{definition}\label{defin:TopTrivialNC}
Let $A$ and $B$ be separable nonunital \cstar-algebras, and let $\Phi\colon \mathcal Q(A)\to\mathcal Q(B)$ be a $^*$-homomorphism. We call $\Phi$ Borel if
 \[
\Gamma_\Phi= \{(a,b)\in \mathcal M(A)_{\leq 1}\times\mathcal M(B)_{\leq 1}\mid \Phi(\pi_A(a))=\pi_B(b)\}
 \]
 is Borel in the product of strict topologies.
\end{definition}

Having obtained our running notion of triviality, we want to focus on the nontrivial part of our homomorphisms.

\begin{definition}\label{def:good}
Let $A$ be a separable nonunital \cstar-algebra. We say that a sequence $(e_n)\subseteq A$ is a \emph{good approximate identity} for $A$ if $(e_n)$ is an approximate identity of positive contractions such that 
\begin{enumerate}[label=(AId \arabic*)]
 \item\label{approxid1} for every $n\in\N$ we have that $e_{n+1}e_n=e_n$ and $\norm{e_{n+1}-e_n}=1$, and
 \item\label{approxid2} for every finite interval $I\subseteq\bbN$ there is a positive contraction $h_I\leq (e_{\max I+1}-e_{\min I-2})$ such that $h_I(e_{\max I}-e_{\min I -1})=(e_{\max I}-e_{\min I-1})$ and with the property that $h_Ih_J=0$ whenever $\max I+1<\min J$.
\end{enumerate}
\end{definition}

Every separable \cstar-algebra has such an approximate identity, which can be easily be obtained by taking an approximate identity satisfying condition~\ref{approxid1} and going to a subsequence (see for example \cite[\S1.4]{Pede:C*}). 
The following strengthens the notion of essential ideal (see Proposition~\ref{prop:nonmeagreid}).

\begin{definition}\label{def:noncommnonmeagre}
Let $A$ be a nonunital separable \cstar-algebra. An ideal $\SI\subseteq\mathcal M(A)$ containing $A$ is called \emph{nonmeagre} if for every good approximate identity $(e_n)\subseteq A$ and every partition of $\mathbb N$ into consecutive finite intervals $\bar I = (I_n)$ there is an infinite $L\subseteq\mathbb N$ such that 
\[
\sum_{n\in L}(e_{\max I_n}-e_{\min I_n-1})\in\SI.
\]
We abuse notation and say that an ideal $\SJ\subseteq\mathcal Q(A)$ is nonmeagre if its lifting $\{a\in\mathcal M(A)\mid \pi_A(a)\in \SJ\}$ is a nonmeagre ideal in $\mathcal M(A)$.
\end{definition}

We record the following fact, whose proof is deferred to \S\ref{S.LargeIdeals} (see Lemma~\ref{lem:approxid}).

\begin{lemma}\label{lem:approxid1}
Let $A$ be a nonunital separable \cstar-algebra, and suppose that $\SI\subseteq\mathcal M(A)$ is an ideal containing $A$. Assume that there is a good approximate identity $(E_n)\subseteq A$ such that for every partition of $\mathbb N$ into consecutive finite intervals $\bar I = (I_n)$ there is an infinite $L\subseteq\mathbb N$ such that 
\[
\sum_{n\in L}(e_{\max I_n}-e_{\min I_n-1})\in\SI.
\] 
Then $\SI$ is nonmeagre.
\end{lemma}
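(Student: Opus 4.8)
The plan is to reduce nonmeagreness with respect to an arbitrary good approximate identity to the single distinguished one $(E_n)$ by a diagonal/reindexing argument. So, let $(e_n)\subseteq A$ be any good approximate identity and let $\bar I = (I_n)$ be a partition of $\mathbb N$ into consecutive finite intervals; we must produce an infinite $L\subseteq\mathbb N$ with $\sum_{n\in L}(e_{\max I_n}-e_{\min I_n-1})\in\SI$. First I would record the standard comparison fact: given two approximate identities $(e_n)$ and $(E_m)$ of the same separable algebra $A$, one can interleave them, i.e.\ find a strictly increasing sequence of indices along which each is, up to small norm error, dominated by an element of the other. Concretely, since each $e_n\in A$ and $(E_m)$ is an approximate identity, for each $n$ there is $m(n)$ with $\norm{E_{m(n)}e_n - e_n}$ as small as we like; conversely each $E_m$ is approximated by some $e_{k(m)}$. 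This lets me build strictly increasing sequences of indices so that the blocks $e_{\max I_n}-e_{\min I_n-1}$ are, modulo $A$ (or modulo arbitrarily small norm), ``sandwiched'' by the blocks $E_{\max J_k}-E_{\min J_k-1}$ coming from a suitable coarsening $\bar J=(J_k)$ of a partition adapted to the $m(n)$'s.

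The key step is then: choose the coarsening $\bar J$ of $\mathbb N$ so that for each $k$, the interval $J_k$ (in the $E$-indexing) is large enough to contain, in the sense above, a whole block of the form $e_{\max I_{n(k)}}-e_{\min I_{n(k)-1}}$ for some increasing choice $n(k)$, with control on the cross terms coming from condition~\ref{approxid2} of \emph{both} good approximate identities (this is exactly why the $h_I$ with the disjointness property $h_Ih_J=0$ for well-separated intervals are built into the definition: they guarantee that summing blocks over a sparse index set behaves like an orthogonal sum, so membership of the $E$-sum in the ideal $\SI$ is not destroyed by small perturbations and can be transferred to the $e$-sum). Apply the hypothesis to $\bar J$: there is an infinite $K\subseteq\mathbb N$ with $\sum_{k\in K}(E_{\max J_k}-E_{\min J_k-1})\in\SI$. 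Set $L=\{n(k) : k\in K\}$, an infinite subset of $\mathbb N$.

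It remains to check that $\sum_{n\in L}(e_{\max I_n}-e_{\min I_n-1})\in\SI$. Here I would argue that the difference
\[
\sum_{n\in L}(e_{\max I_n}-e_{\min I_n-1}) - \sum_{k\in K}(E_{\max J_k}-E_{\min J_k-1})
\]
lies in $A\subseteq\SI$, or more precisely can be written as an element of $A$ plus a term of arbitrarily small norm, using that $A$ is closed and that $\SI$ is norm-closed (it is an ideal containing $A$; closing under norm limits, which one may assume, or arranging the estimates to telescope): the sandwiching gives $e_{\max I_n}-e_{\min I_n-1} \approx h\,(E_{\max J_k}-E_{\min J_k-1})\,h$ for an appropriate multiplier $h$ supported on the corresponding block, and since $\SI$ is an ideal the right-hand sum, being $h'(\sum_{k\in K}(E_{\max J_k}-E_{\min J_k-1}))h'$ for a suitable strict-limit multiplier $h'$, stays in $\SI$; the orthogonality built into \ref{approxid2} ensures these block-wise approximations assemble into a genuine approximation of the infinite sums. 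Then $\sum_{n\in L}(e_{\max I_n}-e_{\min I_n-1})\in\SI$, as desired, so $\SI$ is nonmeagre.

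The main obstacle I anticipate is the bookkeeping in the interleaving step: one must simultaneously coarsen in the $E$-indexing and sparsify in the $I$-indexing while keeping the cross terms between non-adjacent blocks genuinely zero (not merely small), which forces a careful inductive construction that alternates between the two good approximate identities and invokes \ref{approxid2} for each at every stage. Verifying that the resulting infinite block sums differ by an element of $A$ (rather than merely a strictly-convergent multiplier that happens not to be in $A$) is the delicate point, and is where the precise form of Definition~\ref{def:good}---especially the existence of the auxiliary contractions $h_I$ dominating the ``fattened'' blocks $e_{\max I+1}-e_{\min I-2}$---is used to absorb the approximation errors into the ideal.
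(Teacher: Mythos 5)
Your proposal takes essentially the same route as the paper's proof: interleave the two approximate identities so that each block of the arbitrary one is compressed, up to a summable (say $2^{-k}$) error, by a block of a coarsened partition in the distinguished indexing, apply the hypothesis to that coarsened partition, and transfer membership to $\SI$ via the ideal property, with the summable errors assembling into an element of $A\subseteq\SI$. Two small adjustments: do not assume $\SI$ is norm-closed (the telescoping/summable-error variant you mention is the one to use, since then the difference of the two block sums is genuinely an element of $A$), and condition (AId 1) alone suffices for the interleaving -- the compression is by the distinguished blocks themselves, so the auxiliary contractions $h_I$ from (AId 2) are not needed.
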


We will return to study nonmeagre ideals and their properties in \S\ref{S.LargeIdeals}. For now, we just use their definition to introduce the noncommutative (topological) analogue of the $\wep$. When choosing the projection $p$ as in Proposition~\ref{prop:algWEP}, we want to make sure that $\Phi_p$, the cut-down of $\Phi$ by $p$, is still a $^*$-homomorphism. This can only happen if $p$ commutes with the range of $\Phi$.

\begin{definition}\label{defin:ncwep}
Let $A$ and $B$ be separable nonunital \cstar-algebras. Let $\Phi\colon\mathcal Q(A)\to\mathcal Q(B)$ be a $^*$-homomorphism. We say that $\Phi$ satisfies the noncommutative weak Extension Principle, and write $\ncwep(\Phi)$ if the following holds: there exists a projection $p\in\mathcal Q(B)$ such that 
\begin{enumerate}[label=(wEP \roman*)]
\item\label{wep01} $p$ commutes with the image of $\Phi$,
\item\label{wep02} $\Phi_{1-p}\colon \mathcal Q(A)\to (1-p)\mathcal Q(B)(1-p)$ has nonmeagre kernel, and 
\item\label{wep03} $\Phi_p\colon\mathcal Q(A)\to p\mathcal Q(B)p$ is Borel.
\end{enumerate}
The principle $\ncwep(\Phi)$ is described by the following diagram:
\begin{center}
\begin{tikzcd}
 & (1-p)\mathcal Q(B)(1-p) \arrow[dr] & \\
\mathcal Q(A) \arrow[dr, "\Phi_{p}" ] \arrow[ur, "\Phi_{1-p}"] \arrow[rr, "\Phi = \Phi_{1-p} \oplus \Phi_{p}"] & & \mathcal Q(B). \\
 & p\mathcal Q(B)p \arrow[ur] &
\end{tikzcd}
\end{center}

We say that the \emph{noncommutative weak Extension Principle} holds, and write $\ncwep$, if $\ncwep(\Phi)$ holds for every $^*$-homomorphism between pairs of coronas of separable nonunital \cstar-algebras.
\end{definition}

\begin{remark}\label{remark:wep}
\begin{enumerate}
\item All Borel $^*$-homomorphisms between coronas of abelian \cstar-algebras are trivial (see Theorem~2.8 in \cite{vignati2018rigidity}). This, and the fact that nonmeagre ideals in coronas are essential (Proposition~\ref{prop:nonmeagreid}) show that the $\ncwep$ implies the $\wep$. In fact, as there are essential ideals which are not nonmeagre (e.g., Remark~\ref{rem:differencennmess}), this new principle is a strengthening of the $\wep$. 
\item By \cite[Lemma 7.2]{CoFa:Automorphisms} all Borel automorphisms of the Calkin algebra are inner. The existence of outer automorphisms of the Calkin algebra (or of nontrivial automorphisms of the Boolean algebra $\mathcal P(\N)/\Fin$, a result dating back to the '50s, see \cite{Ru}) gives the failure of the $\ncwep$ under $\CH$. We can therefore at best hope for consistency.
\item Dow in \cite{dow2014non} constructed (in $\ZFC$!) an everywhere nontrivial copy of $\omega^*$ inside itself. Dualising his construction, we get a surjective unital $^*$-homomorphism $\ell_\infty/c_0\to\ell_\infty/c_0$ which does not have a Borel nontrivial cut-down. This shows the necessity of the projection $p$ in the statement of the $\ncwep$, and that a stronger extension principle cannot hold. 
\end{enumerate}
\end{remark}

\section{Proving the \texorpdfstring{$\ncwep$}{ncwep}}\label{S.ProofWEP}
In this section we prove Theorem~\ref{thm:mainIntro}, restated for convenience.

\begin{theorem}\label{thm:ncwep}
Assume $\OCA$ and $\MA$. Then the noncommutative weak Extension Principle $\ncwep$ holds.
\end{theorem}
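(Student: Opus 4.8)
The plan is to import the architecture of the commutative weak Extension Principle from \cite{Fa:AQ,VY:WEP} and to split the argument into two phases: first produce the projection $p$ and establish clauses \ref{wep01} and \ref{wep02} of Definition~\ref{defin:ncwep}, then show that the complementary summand $\Phi_p$ is Borel, i.e.\ \ref{wep03}. Fix a $^*$-homomorphism $\Phi\colon\mathcal{Q}(A)\to\mathcal{Q}(B)$ and good approximate identities $(e_n)\subseteq A$, $(f_n)\subseteq B$ in the sense of Definition~\ref{def:good}; these induce the block decompositions of $\mathcal{M}(A)$ and $\mathcal{M}(B)$ into corners cut down by differences $e_{\max I}-e_{\min I-1}$, and elements supported on widely separated blocks span, modulo $A$ resp.\ $B$, the ``block subalgebras'' of the coronas on which $\Phi$ is analysed locally. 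The decisive external input, and the only place $\OCA$ and $\MA$ enter directly, is the lifting theorem of \cite{mckenney2018forcing}: applied to $\Phi$ relative to our approximate identities and an interval partition of $\bbN$, it shows that $\Phi$ agrees, on the block subalgebra cut out by a suitably large family of blocks, with a map implemented by a sequence of honest $^*$-homomorphisms between corners of $A$ and $\mathcal{M}(B)$ whose ranges are pairwise approximately orthogonal and approximately block-diagonal in $\mathcal{M}(B)$ --- the point being that $\OCA$ forces these local lifts to be chosen coherently on such a large family.

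\textbf{Phase 1: the projection.} Let $\mathcal{T}$ be a family of blocks of $A$, maximal with respect to the property that the corresponding local lifts cohere to a single lift on the block subalgebra they span --- the ``trivial'' blocks --- and for $n\in\mathcal{T}$ let $q_n\in\mathcal{M}(B)$ be the (approximate) support projection of the $n$-th local homomorphism. Since the $q_n$ live in essentially orthogonal corners of $\mathcal{M}(B)$, the sum $P:=\sum_{n\in\mathcal{T}}q_n$ converges strictly; set $p:=\pi_B(P)$. Three verifications remain. (i) $p$ is a projection: the approximate orthogonality and approximate idempotency of the $q_n$ collapse in the quotient once the error terms --- which now lie in $B$ --- are controlled. (ii) $p$ commutes with $\ran(\Phi)$: the image of $\Phi$ is, modulo $B$, block-diagonal with respect to the very decomposition from which $P$ is assembled, hence commutes with $P$ modulo $B$. (iii) $\Phi_{1-p}$ has nonmeagre kernel: by Lemma~\ref{lem:approxid1} it suffices to verify the condition of Definition~\ref{def:noncommnonmeagre} for the single approximate identity $(e_n)$ used to build $p$; given an interval partition $\bar I=(I_n)$ of it, $\OCA$, re-applied through the lifting theorem against $\bar I$, produces an infinite $L$ along which the blocks $I_n$ ($n\in L$) are coherently trivial, so by maximality of $\mathcal{T}$ their images lie below $p$, whence $\Phi(\pi_A(\sum_{n\in L}(e_{\max I_n}-e_{\min I_n-1})))\in p\mathcal{Q}(B)p$ and this element is killed by $\Phi_{1-p}$.

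\textbf{Phase 2: $\Phi_p$ is Borel.} On the $p$-corner the local homomorphisms of Phase 1 glue, using $\MA$, to a lift $\tilde\Phi_p\colon\mathcal{M}(A)\to P\mathcal{M}(B)P$ of $\Phi_p$. Carrying out the estimates of \cite{vignati2014algebra} one can show directly that this glued map is strict-Borel, but the recent work of De Bondt and the first author \cite{TrivIsoMetric} lets us bypass the heaviest of those computations by supplying that a $^*$-homomorphism between coronas admitting a blockwise-trivial lift of this shape already has a strict-Borel lift defined on all of the multiplier algebra. Granting this, the graph
\[
\Gamma_{\Phi_p}=\{(a,b)\in\mathcal{M}(A)_{\le1}\times\mathcal{M}(B)_{\le1}\mid b\in P\mathcal{M}(B)P\ \text{and}\ b-\tilde\Phi_p(a)\in B\}
\]
is Borel in the product of strict topologies, since $\tilde\Phi_p$ is strict-Borel and both conditions $b\in P\mathcal{M}(B)P$ and $b-\tilde\Phi_p(a)\in B$ define Borel subsets of the unit ball of $\mathcal{M}(B)$. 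Thus $\Phi_p$ is Borel in the sense of Definition~\ref{defin:TopTrivialNC}; together with Phase 1 the projection $p$ witnesses $\ncwep(\Phi)$, and as $\Phi$ was arbitrary, $\ncwep$ holds.

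\textbf{Where the difficulty lies.} The argument is conceptually parallel to the Boolean case, and the recourse to \cite{TrivIsoMetric} removes the historically heaviest part of Phase 2; what remains genuinely demanding is Phase 1. Converting the \emph{approximate} $^*$-algebraic data handed over by the lifting theorem into \emph{exact} statements in $\mathcal{Q}(B)$ --- that $P$ is truly idempotent, that it truly commutes with $\ran(\Phi)$, and that coherently trivial blocks are truly annihilated by $\Phi_{1-p}$ --- requires sustained $\varepsilon$-bookkeeping along the block structure, and non-commutativity makes every such estimate heavier than its counterpart in \cite{Fa:AQ}. The subtlest single point is clause (iii): reconciling the approximate identity hard-wired into $p$ with the arbitrary one quantified in Definition~\ref{def:noncommnonmeagre} --- which is exactly the role of Lemma~\ref{lem:approxid1} --- and locating, via $\OCA$, the infinite coherently-trivial sub-family $L$ inside an arbitrary interval partition.
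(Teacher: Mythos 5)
Your Phase 2 contains the essential gap. You assert that the blockwise lifts ``glue, using $\MA$, to a lift $\tilde\Phi_p\colon\mathcal M(A)\to P\mathcal M(B)P$'' and that \cite{TrivIsoMetric} supplies a strict-Borel lift defined on \emph{all} of $\mathcal M(A)$. That is not what \cite{TrivIsoMetric} gives, and no such global lift is ever produced in the actual argument. What the coordinate-respecting theorem of \cite{TrivIsoMetric} yields (Lemma~\ref{lem:usingDBV}) is, for each \emph{fixed} sparse sequence $\bar I$, a Borel lift $\alpha_{\bar I}=\sum_n\alpha_{\bar I,n}$ of $\Phi_p$ on the block algebra $\mathcal F(\bar I)$ only. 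The passage from this family of local lifts, indexed by the $\sigma$-directed poset $\mathbb P$ of interval partitions, to Borelness of $\Phi_p$ is precisely the hard part: one first shows any two such lifts agree approximately on overlaps (Lemma~\ref{lemma:agreeing}), then runs an $\OCA$ colouring argument (the space $\mathcal X$, the open partitions $L_0^n\cup L_1^n$, and $\mathfrak b>\omega_1$ via Lemma~\ref{lem:noL0hom}) to extract countably many $L_1$-homogeneous sets whose first coordinates are $\leq$-cofinal in $\mathbb P$, and finally one proves Borelness of $\Gamma_{\Phi_p}$ not by exhibiting a Borel lift but by giving an analytic description and a co-analytic description of the graph (conditions \eqref{Borelc12} and \eqref{Borelc13} of Lemma~\ref{lem:Borel}). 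Your ``granting this'' step simply assumes away this uniformisation problem; nothing in $\MA$ lets you glue lifts over an uncountable, merely $\sigma$-directed family of partitions into one strictly Borel map, and whether Borel homomorphisms admit well-behaved global lifts at all is exactly the kind of question the paper deliberately avoids.

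Phase 1 also has a structural weakness. The device that actually powers clauses \ref{wep01} and \ref{wep02} is the \emph{everywhere nonmeagre ideal} $\SI\subseteq\cP(\bbN)$ delivered together with the positive contractions $r_n$ by the lifting theorem (Lemma~\ref{lem:liftrig}, via \cite[Theorem 2.17]{vignati2018rigidity}): one sets $p=\pi_B(\sum_n r_n)$ over \emph{all} blocks, and for an arbitrary interval partition $(J_n)$ nonmeagreness of $\SI$ in the classical Jalali--Naini/Talagrand sense directly produces an infinite $L$ with $\bigcup_{n\in L}J_n\in\SI$, on which $\Phi$ agrees with the lift and hence is killed by $\Phi_{1-p}$. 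Your replacement --- a ``maximal coherent family $\mathcal T$'' plus a fresh application of $\OCA$ to the new partition --- does not close: a second application of the lifting machinery against $\bar I$ produces its own lifts and support projections, and neither maximality of $\mathcal T$ nor coherence gives any reason why the images of these new blocks should lie below the previously fixed $p$. Likewise, commutation of $p$ with $\ran(\Phi)$ is not the one-line ``the image is block-diagonal modulo $B$'': an arbitrary element of $\mathcal M(A)$ must first be written, modulo $A$, as a sum of four elements supported on sparse sequences (Lemma~\ref{lem:stratification}), and then $[r,\tilde\Phi(a)]\in B$ is proved by a genuine argument that again invokes nonmeagreness of $\SI$ repeatedly (Lemma~\ref{lemma:commuting}). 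So while your overall two-phase outline matches the intended architecture, both the construction of $p$'s key properties and, above all, the Borelness of $\Phi_p$ are asserted rather than proved.
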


The whole section is dedicated to the proof of Theorem~\ref{thm:ncwep}. We fix some notation. 

\begin{notation}\label{notation1}
We fix two nonunital separable \cstar-algebras $A$ and $B$, together with $(e_n)_{n\in\N}$ and $(e_n^B)_{n\in\N}$, two good  approximate identities (see Definition~\ref{def:good}) for $A$ and $B$ respectively. For $n\in \N$, we let 
\[
q_n=e_{n}-e_{n-1}
\]
and, for $S\subseteq \bbN$,
\[
q_S=\sum_{n\in S}q_n.
\]
This sum converges in strict topology for every $S\subseteq\bbN$ and $q_\bbN=1_{\mathcal M(A)}$. 

We denote $\pi_A\colon\mathcal M(A)\to\mathcal Q(A)$ the canonical quotient map. For $a,a'\in\mathcal M(A)$, we write $a=_Aa'$ for $a-a'\in A$. The quotient map $\pi_B$ and the equivalence relation $=_B$ are defined in the same way.

We also fix a $^*$-homomorphism $\Phi\colon\mathcal Q(A)\to \mathcal Q(B)$, and we let $\tilde\Phi\colon \mathcal M(A)\to\mathcal M(B)$ be a set theoretic lift for $\Phi$, that is, a map making the following diagram commute:
\begin{center}
\begin{tikzpicture}
 \matrix[row sep=1cm,column sep=2cm] {
\node (A1) {$\mathcal M(A)$};
& \node (A2) {$\mathcal M(B)$};
\\
\node (B1) {$\mathcal Q(A)$};
& \node (B2) {$\mathcal Q(B)$};
\\
};
\draw (A1) edge[->] node [above] {$\tilde\Phi$} (A2) ;
\draw (A1) edge[->] node [left] {$\pi_A$} (B1) ;
\draw (A2) edge[->] node [right] {$\pi_B$} (B2) ;
\draw (B1) edge[->] node [above] {$\Phi$} (B2) ;
\end{tikzpicture}.
\end{center}
\end{notation}

The following is derived from \cite{vignati2018rigidity}. A subset $\SI\subseteq\mathcal P(\bbN)$ is \emph{everywhere nonmeagre} if $\SI\cap\mathcal P(S)$ is nonmeagre whenever $S\subseteq\bbN$. A function between \cstar-algebras is a completely positive contraction it is a contraction whose matrix amplifications preserve positivity, and it is order zero if it preserves orthogonality.

\begin{lemma}\label{lem:liftrig}
Assume $\OCA$ and $\MA$. Then there are a partition of $\bbN$ into consecutive finite intervals $(I_n)$, positive contractions $r_n\in B$, and an everywhere nonmeagre ideal $\SI$ on $\mathbb N$ containing all finite sets and such that
\begin{enumerate}
\item\label{cond1} for every $S\subseteq\mathbb N$ the sum $\sum_{n\in S} r_n$ strictly converges in $\mathcal M(B)$, 
\item\label{cond2} for every $S\in\SI$ we have that 
\[
\Phi(\pi_A(\sum_{n\in S}q_{I_n}))=\pi_B(\sum_{n\in S}r_n),
\]
and
\item\label{cond3} if $|n-m|\geq 2$, $r_nr_m=0$.
\end{enumerate}
\end{lemma}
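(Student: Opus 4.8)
The plan is to apply the main lifting theorem of \cite{mckenney2018forcing} to the $^*$-homomorphism $\Phi\colon\mathcal Q(A)\to\mathcal Q(B)$ under $\OCA+\MA$, and then to massage the data it produces into the precise shape claimed here. First I would recall what that theorem gives: for a suitable approximate identity on $A$ with associated "block" projections $q_{I_n}$ (obtained by coarsening $(e_n)$ along a partition $(I_n)$ of $\bbN$ into consecutive finite intervals), one obtains a countable family of $^*$-homomorphisms (or at least completely positive, order zero contractions) $\Psi_n$ defined on the corners $q_{I_n}\mathcal M(A)q_{I_n}$, with ranges living in uniformly-localised corners of $\mathcal M(B)$, that together "compute" $\Phi$ on a large set of coordinates — large in the sense of being everywhere nonmeagre. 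Concretely, the content is that $\Phi\circ\pi_A$ agrees with the strict sum $\pi_B(\sum \Psi_n(\cdot))$ when restricted to $\sum_{n\in S}q_{I_n}$ for $S$ ranging over an everywhere nonmeagre ideal. I would then set $r_n := \Psi_n(q_{I_n})$, a positive contraction, and check it lies in $B$ (not merely $\mathcal M(B)$): this is where one uses that each $\Psi_n$ has range in a corner of the form $p_n\mathcal M(B)p_n$ with $p_n\in B$ a genuine element of the algebra, so that $r_n\in B$.

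The three bulleted conditions are then obtained as follows. For \eqref{cond1}, strict convergence of $\sum_{n\in S}r_n$ follows from the uniform localisation of the $r_n$'s: after passing to a further coarsening of the interval partition one arranges that $r_n$ is (approximately) supported in a fixed finite block $e^B_{\max J_n}-e^B_{\min J_n-1}$ of the approximate identity on $B$, so the partial sums are strictly Cauchy. For \eqref{cond3}, i.e. $r_nr_m=0$ when $|n-m|\geq 2$: this is a standard "thinning" argument — one replaces the original partition $(I_n)$ by grouping its intervals into bigger ones and inserting gaps, using property~\ref{approxid2} of the good approximate identity on $B$ to produce the mutually orthogonal cutoffs $h_J$, then compresses each $r_n$ by the appropriate $h_{J_n}$; one must check the compression changes $r_n$ only by an element of $B$ and only on a set of coordinates still in the ideal, so \eqref{cond2} survives. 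For \eqref{cond2} itself, one simply transports the agreement statement from the lifting theorem through these coarsenings/compressions, noting that everywhere nonmeagreness of $\SI$ is preserved under restriction to a coarser coordinate system (this is essentially the definition, plus the observation that coarsening induces a map of power sets under which preimages of everywhere nonmeagre sets are everywhere nonmeagre).

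The main obstacle I expect is the bookkeeping needed to make the three coarsening steps compatible: the lifting theorem delivers one partition of $\bbN$, but achieving \eqref{cond3} (orthogonality of nonadjacent $r_n$) and simultaneously the clean strict-convergence in \eqref{cond1} may each want to coarsen the partition differently, and one has to interleave them while keeping the ideal $\SI$ everywhere nonmeagre and keeping \eqref{cond2} exact rather than merely approximate. Getting an \emph{exact} equality in \eqref{cond2} (as opposed to agreement up to $B$, which is automatic, or up to small norm) requires care: one uses that $\Phi$ is an honest $^*$-homomorphism to upgrade approximate agreement on the generating sum $\sum_{n\in S}q_{I_n}$ to exact agreement, since $\pi_B(\sum_{n\in S}r_n)$ and $\Phi(\pi_A(\sum_{n\in S}q_{I_n}))$ are both projections (or at least idempotents) in $\mathcal Q(B)$ that are close, hence equal. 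A secondary technical point is verifying $r_n\in B$ rather than $\mathcal M(B)$; this should follow from the explicit form of the localised lifts in \cite{mckenney2018forcing}, but it is the kind of statement that needs to be extracted carefully from that reference, together with \cite{vignati2018rigidity} for the order-zero structure of the pieces.
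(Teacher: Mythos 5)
Your overall shape (a black-box lifting theorem applied on an everywhere nonmeagre ideal, followed by regrouping/localisation to get orthogonality and strict convergence) matches the paper, but the way you invoke the black box and the way you repair exactness both contain genuine problems. No available theorem gives, at this stage, corner-wise maps $\Psi_n$ on $q_{I_n}\mathcal M(A)q_{I_n}$ computing $\Phi$ on a nonmeagre set of coordinates: statements of that kind (coordinatewise liftings on $\mathcal F(\bar I)$) are exactly what the rest of \S\ref{S.ProofWEP} labours to prove, and only for the cut-down $\Phi_p$ after the projection $p$ has been constructed. What is actually used here is much more modest: one restricts $\Phi\circ\pi_A$ to the abelian ``diagonal'' family $\{q_S\}$, viewed as a map $\ell_\infty\to\mathcal Q(B)$, and applies the lifting theorem for completely positive order zero contractions $\ell_\infty\to\mathcal Q(B)$ (Theorem~2.17 of \cite{vignati2018rigidity}). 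Even for that a step you omit is needed: since consecutive $q_n$'s are not orthogonal (the $e_n$ are positive contractions, not projections), the map $\chi_S\mapsto\Phi(\pi_A(q_S))$ is not order zero, so one must split into the even- and odd-indexed families $\{q_{2n+i}\}$, lift each separately, and recombine the two everywhere nonmeagre ideals (via the Jalali--Naini/Talagrand characterisation).

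Two of your specific mechanisms would also fail as stated. The ``close, hence equal'' upgrade for \eqref{cond2} is wrong: $q_{I_S}$, and hence $\Phi(\pi_A(q_{I_S}))$, are positive contractions rather than projections, and even for genuine projections closeness yields unitary equivalence, not equality; fortunately no upgrade is needed, because the lifting theorem already gives exact agreement modulo $B$ for supports in the ideal. More seriously, your thinning/compression scheme for \eqref{cond3} --- compressing each $r_n$ by cutoffs $h_{J_n}$ obtained from \ref{approxid2} in $B$ --- changes $r_n$ by an element of $B$ whose norm need not tend to $0$, so the sum of these errors over an infinite $S$ need not lie in $B$, and \eqref{cond2} can be destroyed. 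The paper never modifies the $r_n$ at all: the lifting theorem produces pieces already supported in corners $\overline{(e^B_{k_n}-e^B_{j_n})B(e^B_{k_n}-e^B_{j_n})}$ with $j_n\to\infty$ (up to a small perturbation making them positive), and one only regroups the indices into consecutive intervals $I_n$ so that non-adjacent groups sit in orthogonal blocks; this yields \eqref{cond3} and the strict convergence in \eqref{cond1} for free while keeping \eqref{cond2} exact.
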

\begin{proof}
For $i\in\{0,1\}$, consider the map $\rho_i\colon \ell_\infty\to\mathcal Q(B)$ induced by extending linearly the function
\[
\chi_S\mapsto\Phi(\pi_A(\sum_{n\in S}q_{2n+i})),
\]
where $\chi_S\in\ell_\infty$ is the characteristic function on $S$. The map $\rho_i$ is a completely positive order zero contraction. By Theorem 2.17 in \cite{vignati2018rigidity} we can find for each $i\in\{0,1\}$ an everywhere nonmeagre ideal $\SI_i\subseteq\mathcal P(\bbN)$ which contains all finite sets and a strictly continuous map
\[
\tilde\rho_i=\sum\rho_{i,n}\colon\ell_\infty\to\mathcal M(B)
\]
such that $\tilde\rho_i$ lifts $\rho_i$ on elements of $\ell_\infty$ whose support is in $\SI_i$. These maps can be constructed in such a way that there are natural numbers $j_n<k_n$ such that the range of $\rho_{i,n}$ is contained in $\overline{(e_{k_n}^B-e_{j_n}^B)B(e_{k_n}^B-e_{j_n}^B)}$, where $\lim j_n=\infty$. Since almost positive elements of \cstar-algebras are close to positive elements\footnote{This has a precise meaning: for every $\varepsilon>0$ there is $\delta>0$ such that for any \cstar-algebra $C$ if $c\in C$ is such that $\norm{c-c^*}<\delta$ and the spectrum of $(c+c^*)/2$ is contained in $[-\delta,\infty)$ then there is a positive $d\in C$ with $\norm{d-c}<\varepsilon$.}, we can assume that each $\rho_{i,n}(\chi_{2n+i})$ is positive.
Let 
\[
\tilde\rho=\tilde\rho_{0}+\tilde\rho_1\colon\ell_\infty\to\mathcal M(B)
\]
be defined extending linearly the map 
\[
\chi_S\mapsto \tilde\rho_0(\chi_{S_0})+\tilde\rho_1(\chi_{S_1})
\]
where $S_i=S\cap \{2n+i\mid n\in\bbN\}$ for $i\in\{0,1\}$. By the discussion in page 1705 of \cite{vignati2018rigidity} (specifically, Lemma 3.2 and crucially its proof), we can partition $\bbN$ into a sequence of consecutive intervals $(I_n)$ for every $n$ there a finite interval $[k_n,j_n]$ such that 
\[
(e_{k_n}^B-e_{j_n}^B)\tilde\rho(\chi_{I_n})(e_{k_n}^B-e_{j_n}^B)=\tilde\rho(\chi_{I_n})
\]
and if $j_n < k_{n+2}$ for every $n$. Note that this implies that $\tilde\rho(\chi_{I_n})\tilde\rho(\chi_{I_m})=0$ whenever $|n-m|\geq 2$.

Since $\SI_0$ and $\SI_1$ are everywhere nonmeagre, the ideal
\[
\SI = \{S\mid S_0\in\SI_0\text{ and }S_1\in \SI_1\}
\]
is everywhere nonmeagre (this can be viewed using the classical characterisation of nonmeagreness by Jalali--Naini and Talagrand, see \cite[\S3.10]{Fa:AQ}).
Note that 
$\tilde\rho$ lifts $\Phi\restriction \{\pi_A(q_S)\mid S\subseteq\bbN\}$ on $\SI$. Setting $r_n=\tilde\rho(\chi_{I_n})$ we have conditions ~\eqref{cond1}--\eqref{cond3}. 
\end{proof}
We now change our approximate identity in light of Lemma~\ref{lem:liftrig}, and let $e'_n=e_{\max I_n}$ for all $n$. For ease of notation, we rename it back as $e_n=e_n'$, and let again
\[
q_n=e_n-e_{n-1}\text{ and }q_I=e_{\max I}-e_{\min I-1}, \text{ for }I\subseteq\bbN.
\]
The elements $q_n$ and $r_n$ are fixed until the end of this section. 

Lemma~\ref{lem:liftrig} gives an everywhere nonmeagre ideal $\SI$ such that for every $S\in\SI$ we have that 
\[
\Phi(\pi_A(\sum_{n\in S}q_{n}))=\pi_B(\sum_{n\in S}r_n).
\]
Let
\begin{equation}\label{theP}\tag{$\ast$}
    r=\sum_nr_n \text{ and }p=\pi_B(r).
\end{equation}
For $S\subseteq\bbN$, we let $r_S=\sum_{n\in S}r_n$ and $p_S=\pi_B(r_S)$.

We claim that $p$ is the projection witnessing that the principle $\ncwep(\Phi)$ holds. The remainder of this section is dedicated to prove conditions \ref{wep01}--\ref{wep03} from Definition~\ref{defin:ncwep}. 
\begin{lemma}\label{lem:pworks1}
For every $\varepsilon>0$ there is $n$ such that for all $m>n$ we have that $\norm{r_{[m-1,m+1]}r_m-r_m}<\varepsilon$.
Consequently
\begin{enumerate}
\item\label{projcond1} for every $S\subseteq\bbN$
\[
\sum_{n\in S} r_{[n-1,n+1]}r_n=_B r_S,\text{ and}
\]
\item \label{projcond2}$p$ is a projection.
\end{enumerate}
\end{lemma}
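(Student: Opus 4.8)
Write $b_m:=r_{[m-1,m+1]}r_m-r_m\in B$. By \eqref{cond3} these are uniformly bounded and satisfy $b_mb_{m'}^*=b_m^*b_{m'}=0$ whenever $|m-m'|\geq 4$ (each term of such a product carries a factor $r_ir_j$ with $|i-j|\geq 2$). The plan is to prove the displayed estimate, i.e. $\norm{b_m}\to 0$, and then read off \eqref{projcond1} and \eqref{projcond2}. Granting $\norm{b_m}\to 0$: for arbitrary $S\subseteq\bbN$, split $S$ into its four residue classes modulo $4$; on each class the $b_k$ are pairwise orthogonal with $\norm{b_k}\to 0$, so each subsum converges in norm to an element of $B$, whence $\sum_{k\in S}(r_{[k-1,k+1]}r_k-r_k)\in B$, i.e. $\sum_{k\in S}r_{[k-1,k+1]}r_k=_B r_S$; this is \eqref{projcond1}. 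Taking $S=\bbN$ and using $rr_k=r_{[k-1,k+1]}r_k$ (by \eqref{cond3}) together with strict continuity of left multiplication by $r$, one has $\sum_k r_{[k-1,k+1]}r_k=r\bigl(\sum_kr_k\bigr)=r^2$, so $r^2=_B r$; since $r$ is a strict limit of positive operators it is positive, so $p=\pi_B(r)$ is a projection, which is \eqref{projcond2}.

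The content is therefore $\norm{b_m}\to 0$, which I would prove by contradiction. Suppose $\norm{b_m}\geq\varepsilon_0>0$ for all $m$ in some infinite set; thin it to an infinite $M_0$ whose consecutive elements lie at distance $\geq 4$, so that the triples $\{m-1,m,m+1\}$ ($m\in M_0$) are pairwise disjoint, and set $M_0^+=\bigcup_{m\in M_0}\{m-1,m,m+1\}$. Since $\SI$ is everywhere nonmeagre, $\SI\cap\mathcal P(M_0^+)$ is nonmeagre, so by the Jalali-Naini--Talagrand characterisation of meagre ideals there is $T\in\SI$ containing infinitely many of these triples; letting $L\subseteq M_0$ consist of the midpoints of the triples that lie inside $T$, both $L$ and $\bar J:=(L-1)\cup L\cup(L+1)\subseteq T$ belong to $\SI$, and $L$ is $4$--separated.

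The reason this $L$ works is the exact identity $q_{[k-1,k+1]}q_k=q_k$, valid for every $k$: expanding $q_{k-1}q_k$, $q_k^2$ and $q_{k+1}q_k$ with the relations $e_{n+1}e_n=e_n$ of the approximate identity one gets $(e_{k-1}-e_{k-1}^2)+(e_k^2-2e_{k-1}+e_{k-1}^2)+(e_k-e_k^2)=e_k-e_{k-1}=q_k$. As $q_nq_k=0$ for $|n-k|\geq 2$ and $\{k-1,k,k+1\}\subseteq\bar J$ for $k\in L$, this gives $q_{\bar J}q_L=\sum_{k\in L}q_{[k-1,k+1]}q_k=q_L$; applying $\Phi$ and condition~\eqref{cond2} (legitimate as $\bar J,L\in\SI$) yields $\pi_B(r_{\bar J})\pi_B(r_L)=\pi_B(r_L)$, and by \eqref{cond3} again $r_{\bar J}r_L=\sum_{k\in L}r_{[k-1,k+1]}r_k=rr_L$, so $x:=\sum_{k\in L}b_k=rr_L-r_L\in B$. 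Now I use that $L$ is $4$--separated: the $c_k:=b_kb_k^*$ ($k\in L$) are pairwise orthogonal positives with $\sum_{k\in L}c_k=xx^*\in B$. By the construction in the proof of Lemma~\ref{lem:liftrig} each $r_n$ is supported on levels $\geq\ell_n$ of the good approximate identity $(e_m^B)$ of $B$ (that is, $e_N^Br_n=r_ne_N^B=0$ once $N\leq\ell_n$), with $\ell_n\to\infty$; hence $e_N^Bc_k=c_ke_N^B=0$ for all large $k\in L$. Given $\varepsilon>0$, choose $N$ with $\norm{xx^*-e_N^B\,xx^*\,e_N^B}<\varepsilon$ and $K$ with $e_N^Bc_k=c_ke_N^B=0$ for $k\geq K$ in $L$; writing $xx^*=z+w$ with $z=\sum_{k\in L,\,k\geq K}c_k$ and $w=\sum_{k\in L,\,k<K}c_k$ we get $e_N^B\,xx^*\,e_N^B=e_N^Bwe_N^B$, while $z$ and $w-e_N^Bwe_N^B$ are self-adjoint and orthogonal, so $\norm{xx^*-e_N^B\,xx^*\,e_N^B}=\max\bigl(\norm{z},\norm{w-e_N^Bwe_N^B}\bigr)\geq\norm{z}=\sup_{k\geq K}\norm{c_k}$. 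Thus $\norm{b_k}^2<\varepsilon$ for all large $k\in L$, contradicting $L\subseteq M_0$; hence $\norm{b_m}\to 0$.

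I expect the delicate points to be the combinatorial selection in the second step — producing from everywhere-nonmeagreness of $\SI$ a member of the exact shape $(L-1)\cup L\cup(L+1)$ via the Jalali-Naini--Talagrand picture of meagre ideals — and pinning down, from the construction behind Lemma~\ref{lem:liftrig}, the localisation property of the $r_n$ (support on levels tending to $\infty$) used in the final orthogonal-corners estimate.
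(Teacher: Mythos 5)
Your proof is correct and follows essentially the same route as the paper: argue by contradiction, use nonmeagreness of $\SI$ to find an infinite $L$ with the relevant $q$-sums in $\SI$, apply conditions \eqref{cond2}--\eqref{cond3} together with the exact identity $q_{[k-1,k+1]}q_k=q_k$ to get $\sum_{k\in L}(r_{[k-1,k+1]}r_k-r_k)\in B$, and then contradict the lower bound on the norms; parts \eqref{projcond1} and \eqref{projcond2} are deduced exactly as in the paper (your mod-$4$ split versus the paper's even/odd split is immaterial). The only difference is that you spell out, via the $e^B$-localisation of the $r_n$ coming from the construction in Lemma~\ref{lem:liftrig}, the estimate that the paper compresses into the identity $\norm{\pi_B(\sum_{k\in L}b_{n_k})}=\limsup_k\norm{b_{n_k}}$, which is a legitimate (and welcome) elaboration rather than a different argument.
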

\begin{proof}
We argue by contradiction, and suppose that there is $\varepsilon>0$ and an infinite increasing sequence $(n_k)$ such that for all $k\in\bbN$ we have that
\[
\norm{r_{[n_k-1,n_k+1]}r_{n_k}-r_{n_k}}>\varepsilon.
\]
We can assume that $n_{k+1}>n_{k}+3$. Let $J_k=[n_k-1,n_{k+1}-1)$. Since the ideal $\SI$ is nonmeagre, we can find an infinite $L$ such that $\bigcup_{k\in L}J_k\in\SI$, and therefore
\[
\tilde\Phi(\sum_{k\in L}q_{[n_k-1,n_k+1]})=_B\sum_{k\in L} r_{[n_k-1,n_k+1]},
\text{ and }
\tilde\Phi(\sum_{k\in L}q_{n_k})=_B\sum_{k\in L}r_{n_k}.
\]
Using that $\sum_{k\in L}q_{[n_k-1,n_k+1]}\sum_{k\in L}q_{n_k}=\sum_{k\in L}q_{n_k}$, we have that 
\[
\sum_{k\in L}r_{[n_k-1,n_k+1]}r_{n_k}=\sum_{k\in L} r_{[n_k-1,n_k+1]}\sum_{k\in L} r_{n_k}=_B\sum_{k\in L} r_{n_k},
\]
where the first equality is given by that $n_k+3<n_{k+1}$ and the fact that if $|n-m|\geq 2$ then $r_nr_m=0$. Bringing everywhere together we get that
\[
0=\norm{\pi_B(\sum_{k\in L}r_{[n_k-1,n_k+1]}r_{n_k}-\sum_{k\in L}r_{n_k})}=\limsup_k\norm{r_{[n_k-1,n_k+1]}r_{n_k}-r_{n_k}}>\varepsilon.
\]
This is a contradiction.

Let us now show \eqref{projcond1} and \eqref{projcond2}. \eqref{projcond1}: Fix $S\subseteq\bbN$ and let $S_0=S\cap \{2n\mid n\in \bbN\}$ and $S_1=S\setminus S_0$. Note that
\[
\sum_{n\in S} r_{[n-1,n+1]}r_n=\sum_{n\in S_0}r_{[n-1,n+1]}r_n+\sum_{n\in S_1} r_{[n-1,n+1]}r_n \text{ and }r_S=r_{S_0}+r_{S_1}.
\]
Since $r_nr_m=0$ whenever $|n-m|\geq 2$, then
\[
\sum_{n\in S_0} r_{[n-1,n+1]}r_n-r_{S_0} = \sum_{n\in S_0}(r_{[n-1,n+1]}r_n-r_n).
\]
Since $\norm{\pi_B(\sum_{n\in S_0}(r_{[n-1,n+1]}r_n-r_n))}=\limsup_{n\in S_0}\norm{r_{[n-1,n+1]}r_n-r_n}=0$, we get that $\sum_{n\in S_0} r_{[n-1,n+1]}r_n=_Br_{S_0}$. The same reasoning gives that $\sum_{n\in S_1} r_{[n-1,n+1]}r_n=_Br_{S_1}$, we have proved \eqref{projcond1}. 

\eqref{projcond2}: Since $r=r_{2\bbN}+r_{2\bbN+1}$ then $r^2=r_{2\bbN}^2+r_{2\bbN}r_{2\bbN+1}+r_{2\bbN+1}r_{2\bbN}+r_{2\bbN+1}^2$. Since $r_nr_m=0$ if $|n-m|\geq 2$, then $
r_{2\bbN}^2=\sum_nr_{2n}^2$, $r_{2\bbN+1}^2=\sum_nr_{2n+1}^2$, $
r_{2\bbN}r_{2\bbN+1}=\sum_n (r_{2n}+r_{2n+2})r_{2n+1}$, and $r_{2\bbN+1}r_{2\bbN}=\sum (r_{2n-1}+r_{2n+1})r_{2n}$.
Putting everything together we get that
\[
r^2=\sum r_{[2n-1,2n+1]}r_{2n}+\sum r_{[2n,2n+2]}r_{2n+1}=_Br_{2\bbN}+r_{2\bbN+1}=r.
\]
This shows that $p$ is a projection.
\end{proof}
\begin{notation}\label{notation2}
A sequence $\bar I=(I_n)$ of consecutive finite nonempty intervals in $\bbN$ is called a \emph{sparse sequence} if $\max I_n+1<\min I_{n+1}$ for all $n\in\bbN$. For $S\subseteq\bbN$ we write $I_S$ for $\bigcup_{n\in S}I_n$.

Let $\bar I=(I_n)$ be a sparse sequence. Define
\[
\mathcal F(\bar I)_n=\{a\in A\mid q_{I_n}a=aq_{I_n}=a\} \text{ and } \mathcal F(\bar I)=\prod \mathcal F(\bar I)_n.
\]
If $a\in\mathcal F(\bar I)$ we say that $a$ is supported on $\bar I$. Note that if $S\subseteq\N$ is disjoint from $\bigcup_nI_n$ and $a$ is supported on $\bar I$, then $q_Sa=0=aq_S$.

When we write `Let $a=\sum a_n\in\mathcal F(\bar I)$' we implicitly mean that $a_n\in\mathcal F(\bar I)_n$. In this case, if $S\subseteq\bbN$, we let $a_S=\sum_{n\in S}a_n=q_{I_S}aq_{I_S}$. Note that 
\[
\mathcal F(\bar I)\cap A=\bigoplus\mathcal F(\bar I)_n,
\]
meaning that 
\[
\pi_A[\mathcal F(\bar I)]=\prod \mathcal F(\bar I)_n/\bigoplus \mathcal F(\bar I)_n.
\]
Define the sets $\mathcal D(\bar I)\subseteq\mathcal M(B)$ by setting
\[
\mathcal D(\bar I)_n=r_nBr_n \text{ and }
\mathcal D(\bar I)=\prod\mathcal D(\bar I)_n
\]
Once again, we have that
\[
\mathcal D(\bar I)\cap B=\bigoplus \mathcal D(\bar I)_n\text{ and }
\pi_B[\mathcal D(\bar I)]=\prod\mathcal D(\bar I)_n/\bigoplus\mathcal D(\bar I)_n.
\]
\end{notation}

\begin{lemma}\label{lemma:commuting}
Let $\bar I$ be a sparse sequence, and let $a=\sum a_n\in\mathcal F(\bar I)$. The following assertions hold:
\begin{enumerate}
\item\label{commuting11} $r\tilde\Phi(a)=_Br_{I_\N}\tilde\Phi(a)$,
\item\label{commuting12} $r\tilde\Phi(a)r=_B\sum_n (r_{I_n}\tilde\Phi(a)r_{I_n})$,
\item\label{commuting13} $[r,\tilde\Phi(a)]\in B$, and
\item\label{commuting14} for every $S\subseteq \N$ we have that $r\tilde\Phi(a_S)=_Br_{I_S}\tilde\Phi(a)$.
\end{enumerate}
\end{lemma}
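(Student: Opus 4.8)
The plan is to deduce items~\ref{commuting12}, \ref{commuting13} and~\ref{commuting14} from item~\ref{commuting11}, and to prove item~\ref{commuting11} by a reflection argument against the everywhere nonmeagre ideal $\SI$, following the template of the proof of Lemma~\ref{lem:pworks1}.

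First I would record the algebraic bookkeeping forced by sparseness of $\bar I$: the $q_{I_n}$ are pairwise orthogonal, $q_{I_n}a_m=a_mq_{I_n}=\delta_{nm}a_m$ and $q_{I_S}a=aq_{I_S}=a_S$; dually the $r_{I_n}$ are pairwise orthogonal, $r_{I_S}\le r$, $r_{I_S}r_{I_{\N\setminus S}}=0$, and $r_{I_S}r=_Br_{I_S}$ by Lemma~\ref{lem:pworks1}\eqref{projcond1}. Applying item~\ref{commuting11} to $a^*$ and taking adjoints yields the ``right-hand'' version $\tilde\Phi(a)r=_B\tilde\Phi(a)r_{I_\N}$. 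For item~\ref{commuting14} I would note that $a_S$ and $a_{\N\setminus S}$ are again supported on sparse subsequences, so item~\ref{commuting11} applies to each; the instance for $a_{\N\setminus S}$ gives $p\,\Phi(\pi_A(a_{\N\setminus S}))=p_{I_{\N\setminus S}}\Phi(\pi_A(a_{\N\setminus S}))$, hence $p_{I_S}\Phi(\pi_A(a_{\N\setminus S}))=p_{I_S}p\,\Phi(\pi_A(a_{\N\setminus S}))=p_{I_S}p_{I_{\N\setminus S}}\Phi(\pi_A(a_{\N\setminus S}))=0$ (using $0\le p_{I_S}\le p$ and $p_{I_S}p_{I_{\N\setminus S}}=0$), and since $\tilde\Phi(a)-\tilde\Phi(a_S)$ lifts $\Phi(\pi_A(a_{\N\setminus S}))$ this means $r_{I_S}\tilde\Phi(a)=_Br_{I_S}\tilde\Phi(a_S)=_Br\tilde\Phi(a_S)$, which is item~\ref{commuting14}; item~\ref{commuting11} is its $S=\N$ case. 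Combining item~\ref{commuting11}, its right-hand version and item~\ref{commuting14}, items~\ref{commuting12} and~\ref{commuting13} reduce respectively to $\sum_{n\ne m}r_{I_n}\tilde\Phi(a)r_{I_m}\in B$ and $[r_{I_\N},\tilde\Phi(a)]\in B$, i.e.\ to the statement that $\tilde\Phi(a)$ is block-diagonal modulo $B$ with respect to $(r_{I_n})_n$; I would prove this by the same reflection technique as item~\ref{commuting11}, run in both indices at once.

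For item~\ref{commuting11} I would first reduce to the case where $\bar I$ is \emph{super-sparse}, say $\min I_{n+1}-\max I_n\ge5$: any sparse $\bar I$ splits as $\bigsqcup_{i\le3}\bar I^{(i)}$ with each $\bar I^{(i)}=(I_{4n+i})_n$ super-sparse, $a=\sum_{i\le3}a^{(i)}$ and $\tilde\Phi(a)=_B\sum_i\tilde\Phi(a^{(i)})$, and applying item~\ref{commuting11} to each $a^{(i)}$ while killing the finitely many cross terms $r_{I^{(i)}_\N}\tilde\Phi(a^{(j)})=_Br_{I^{(i)}_\N}r_{I^{(j)}_\N}\tilde\Phi(a^{(j)})=_B0$ ($i\ne j$; blocks from distinct $\bar I$-intervals are $\ge2$ apart) recovers the general case. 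Then, assuming $\bar I$ super-sparse and $r_{\N\setminus I_\N}\tilde\Phi(a)\notin B$ towards a contradiction, I would proceed as follows. Each $r_m$ is supported on a band $[j_m,k_m]$ of $(e^B_n)$ with $j_m\to\infty$ (Lemma~\ref{lem:liftrig} and its proof); using $x\in B\Leftrightarrow\lVert(1-e^B_n)x\rVert\to0$ and passing to $\tilde\Phi(a)\tilde\Phi(a)^*$ for two-sided control, extract $\varepsilon>0$ and consecutive finite intervals $M_k\subseteq\N\setminus I_\N$, at least $2$ apart and with pairwise disjoint associated $B$-bands, such that $\lVert r_{M_k}\tilde\Phi(a)\rVert\ge\varepsilon$. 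Thickening $\bar I$ (legitimate, since it is super-sparse), writing $a=ha$ with $h=\sum_n h_{I_n}$ supported on the thickened intervals, and using the padding identities of Lemma~\ref{lem:pworks1} and the bump functions of \ref{approxid2} to absorb boundary interactions, arrange that each $M_k$ lies in the complement of the thickened intervals, so that $q_{M_k}a=aq_{M_k}=0$ — the collar between thickened and un-thickened intervals being dealt with separately by comparing the $\SI$-relations for $q_{I_S}$ and $q_{I'_S}$, using $q_{I'_S}a=q_{I_S}a=a_S$. Now apply the Jalali--Naini--Talagrand criterion to the partition of $\bbN$ into the $M_k$'s and the gaps between them: since $\SI$ is nonmeagre there is an infinite $L$ with $W:=\bigcup_{k\in L}M_k\in\SI$, so $\tilde\Phi(q_W)=_Br_W$ (Lemma~\ref{lem:liftrig}); but $q_Wa=aq_W=0$ forces $\Phi(\pi_A(q_W))\Phi(\pi_A(a))=0$, hence $r_W\tilde\Phi(a)\in B$. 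Passing through $\tilde\Phi(a)\tilde\Phi(a)^*$ once more, the positive elements $r_{M_k}\tilde\Phi(a)\tilde\Phi(a)^*r_{M_k}$ are, modulo $B$, the diagonal band-corners of $r_W\tilde\Phi(a)\tilde\Phi(a)^*r_W\in B$, so $\lVert r_{M_k}\tilde\Phi(a)\rVert^2\to0$ along $L$, contradicting $\lVert r_{M_k}\tilde\Phi(a)\rVert\ge\varepsilon$.

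The hard part is precisely this reflection: faithfully translating a size-$\varepsilon$ defect of $r_{\N\setminus I_\N}\tilde\Phi(a)$, located in $\mathcal M(B)$ and seen through $(e^B_n)$, back to a partition of $\bbN$ on which nonmeagreness of $\SI$ can be invoked (via the band positions $[j_m,k_m]$ of the $r_m$), and — entangled with it — the bookkeeping of the boundary interactions between the blocks $I_n$ and the complementary indices, which is exactly what the padding identities of Lemma~\ref{lem:pworks1} and the bump functions $h_{I_n}$ of \ref{approxid2} are there to handle. The block-diagonality of $\tilde\Phi(a)$ needed for items~\ref{commuting12} and~\ref{commuting13} is obtained by the same scheme, carrying out the reflection in the row and column indices simultaneously.
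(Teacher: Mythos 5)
Your proposal is correct and follows essentially the same route as the paper: item \eqref{commuting11} by reflecting the norm defect of $r_{\N\setminus I_\N}\tilde\Phi(a)$ into band-localised finite chunks and hitting it with a set in the nonmeagre ideal $\SI$ (via Lemma~\ref{lem:liftrig} and $q_Ta=0$ for $T\cap I_\N=\emptyset$), item \eqref{commuting14} from \eqref{commuting11} applied to $a_S$ and $a_{\N\setminus S}$, and items \eqref{commuting12}--\eqref{commuting13} by the same reflection run in both indices (two sets $T_1,T_2\in\SI$, resp.\ a commutator estimate). The only real difference is that your super-sparse reduction, thickening and collar bookkeeping are superfluous: the support condition defining $\mathcal F(\bar I)$ already gives $q_Sa=aq_S=0$ exactly whenever $S\cap\bigcup_n I_n=\emptyset$ (as recorded in Notation~\ref{notation2}), which is all the argument needs.
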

\begin{proof}
To ease of notation, let $x=\tilde\Phi(a)$. 
We will use repeatedly the following fact.
\begin{claim}\label{claim:thisclaim}
For every $\varepsilon>0$ and $k\in\N$ there is $n_0$ such that $\norm{r_{S}xe_k^B}<\varepsilon$ if $S\subseteq\N\setminus n_0$.
\end{claim} 
\begin{proof}    
The sequence $(\sum_{n\leq \ell}r_{n}x)_\ell$ converges strictly to $r_{\N}x$, and therefore the sequence $(\sum_{n\leq \ell}r_{n}xe_k)_\ell$ converges in norm, and it is in particular Cauchy.
\end{proof}

\noindent\eqref{commuting11}: Suppose that $r_{\mathbb N\setminus I_{\N}}x\notin B$, and let $\varepsilon>0$ such that $\norm{\pi_B(r_{\mathbb N\setminus I_{\N}}x)}>\varepsilon$. By passing to a subsequence we can find disjoint finite intervals $[j_n,k_n]\subseteq\N$ such that
\[
\norm{(e_{k_n}^B-e_{j_n}^B) r_{\mathbb N\setminus I_{\N}}x (e_{k_n}^B-e_{j_n}^B)}>\varepsilon/2.
\]
By enlarging the intervals and eventually going to a subsequence, we can assume that for each $n$ there is a finite $F_n\subseteq\N\setminus I_\N$ such that $(e_{k_n}^B-e_{j_n}^B) r_{\mathbb N\setminus I_{\N}}=r_{F_n}$. Applying Claim~\ref{claim:thisclaim} repeatedly, we can further pass to a subsequence and assume that $\norm{r_{F_n}x (e_{k_n}^B-e_{j_n}^B)- r_{F_n}x}<2^{-n}$, 
so that
\[
\limsup_n\norm{r_{F_n}x}=\norm{\pi_B(r_{\bigcup F_n}x)}>0.
\]
Let now $S$ be infinite and such that $T:=\bigcup_{n\in S} F_n\in \SI$, which exists by nonmeagreness of $\SI$. Since $T$ is disjoint from $\bigcup I_n$, $q_Ta=0$. Putting everything together we get that 
\[
0=\norm{\Phi(q_Ta)}=\norm{\Phi(q_T)\Phi(a)}=\norm{\pi_B(r_Tx)}>0,
\]
a contradiction.

\noindent\eqref{commuting12}: By \eqref{commuting11}, $rxr=_Br_{I_\N}xr_{I_\N}$. Therefore 
\[
rxr-\sum_n r_{I_n}xr_{I_n}=_B\sum_n (\sum_{m\neq n}r_{I_n}xr_{I_m}).
\]
To show condition \eqref{commuting12}, it does suffice to show that for every $\varepsilon>0$ there is $n_0$ such that for every disjoint finite sets $F,G\subseteq\N\setminus n_0$ we have that $\norm{r_{I_F}xr_{I_G}}<\varepsilon$. If this is not the case, we can find two sequences of finite nonempty sets $(F_n)$ and $(G_n)$ such that $F_n\cap G_m=\emptyset$ for all $n,m\in\N$, and $\norm{r_{I_{F_n}}xr_{I_{G_n}}}\geq\varepsilon$ for all $n$. Once again passing to a subsequence, by nonmeagreness, we can assume that $T_1:=\bigcup_n I_{F_n}$ and $T_2:=\bigcup_n I_{G_n}$ are both in $\SI$, and, applying Claim~\ref{claim:thisclaim} repeatedly, that $r_{T_1}xr_{T_2}=_B\sum_n r_{I_{F_n}}xr_{I_{G_n}}$. The sets $T_1$ and $T_2$ are disjoint, and $q_{T_1}$ and $q_{T_2}$ both commute with $a$, and thus $q_{T_1}aq_{T_2}=0$. Once again putting everything together we get that 
\[0=\norm{\Phi(q_{T_1}aq_{T_2})}= \norm{\pi_B(r_{T_1}xr_{T_2})}=\limsup_n \norm{r_{I_{F_n}}xr_{I_{G_n}}}\geq \varepsilon.
\]
This is a contradiction.

\noindent\eqref{commuting13}: Assume $rx-xr\notin B$. By \eqref{commuting11}, $r_{I_\N}x-xr_{I_\N}\notin B$. By the same argument as before, we can find disjoint intervals $[j_n,k_n]$ such that
\[
\norm{(e_{k_n}^B-e_{j_n}^B)(r_{I_\N}x-xr_{I_\N})(e_{k_n}^B-e_{j_n}^B)} >\varepsilon.
\]
Going to a subsequence and eventually enlarging the sets $[j_n,k_n]$ we can assume that there are finite disjoint $F_n\subseteq \N$ such that $(e_{k_n}^B-e_{j_n}^B)r_{I_\N}=r_{I_{F_n}}$ and $\norm{r_{I_{F_n}}x(e_{k_n}^B-e_{j_n}^B)-r_{I_{F_n}}x}<2^{-n}$. Let $S$ be infinite and such that $T:=\bigcup I_{F_n}\in \SI$. Then $q_T$ commutes with $a$, contradicting the fact that $\norm{r_Tx-xr_T}\geq\varepsilon$ and that $r_T$ lifts $\Phi(\pi_A(q_T))$, as $T\in\SI$.

\noindent\eqref{commuting14}: Fix $S$, and enumerate it increasingly as $S=\{n_k\mid k\in\N\}$. Let $J_k=I_{n_k}$. Since $a_S$ is supported on $\bar J$, we have that $r\tilde\Phi(a_S)=_Br_{J_\N}\tilde\Phi(a_S)=r_{I_S}\tilde\Phi(a_S)$. Note that this shows that $r_{\N\setminus S}\tilde\Phi(a_S)\in B$, and therefore the same argument applied to $\N\setminus S$ gives that $r_{S}\tilde\Phi(a_{\N\setminus S})\in B$. On the other hand, $r_{I_S}\tilde\Phi(a)=_Br_{I_S}\tilde\Phi(a_S)+r_{I_S}\tilde\Phi(a_{\N\setminus S})$, and therefore $r_{I_S}\tilde\Phi(a)=_Br_{I_S}\tilde\Phi(a_S)=_Br\tilde\Phi(a_S)$.
\end{proof}

If $\bar I=(I_n)$ is a partition of $\bbN$ into consecutive finite nonempty intervals and $i<4$, we let $I_{n}^i=I_{4n+i}\cup I_{4n+i+1}$. This gives us a sparse sequence $\bar I^i=(I_n^i)$. The following is Lemma~2.6 in \cite{vignati2018rigidity} (this was essentially derived from early work of Elliott's, see e.g. the proof of \cite[Theorem 3.1]{Ell:Derivations} or \cite[Lemma 9.7.6]{Fa:Combinatorial}).
\begin{lemma} \label{lem:stratification}
For every $a\in\mathcal M(A)$ there is a partition of $\bbN$ into consecutive finite nonempty intervals $\bar I$ and, for $i<4$, $a_i\in\mathcal F(\bar I^i)$ such that $a-\sum_{i<4}a_i\in A$.
Moreover if $a$ is positive, so is each $a_i$. \qed
\end{lemma}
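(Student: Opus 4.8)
The plan is to adapt Elliott's classical ``cutting into strips'' argument (as in the proof of \cite[Theorem 3.1]{Ell:Derivations}, or \cite[Lemma 9.7.6]{Fa:Combinatorial}; see also \cite[Lemma 2.6]{vignati2018rigidity}) to the multiplier algebra. Fix $a\in\mathcal M(A)$. Since $a$ is a multiplier, for each fixed $m$ the elements $ae_m$ and $e_ma$ lie in $A$, so $\norm{(1-e_k)ae_m}\to 0$ and $\norm{e_ma(1-e_k)}\to 0$ as $k\to\infty$. Recursively one builds a partition of $\bbN$ into consecutive finite nonempty intervals $\bar I=(I_n)$ such that, writing $P_n=q_{I_n}$,
\[
\norm{(1-e_{\max I_{n+1}})\,a\,e_{\max I_n}}+\norm{e_{\max I_n}\,a\,(1-e_{\max I_{n+1}})}<2^{-n}
\]
for all $n$. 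As $\sum_nP_n=1$ strictly, a standard rearrangement gives $a=\sum_{n,m}P_naP_m$; the ``far'' part $\sum_{|n-m|\geq 2}P_naP_m$ equals $\sum_m(1-e_{\max I_{m+1}})aP_m+\sum_m P_m a(1-e_{\max I_{m+1}})$, and each of these is a norm-convergent series of elements of $A$ (the $m$-th term lies in $\mathcal M(A)\cdot A\subseteq A$ and has norm $O(2^{-m})$ by the choice of the $I_n$). Hence $a\equiv D\pmod A$, where $D=\sum_n\big(P_naP_n+P_naP_{n+1}+P_{n+1}aP_n\big)$, the three sums converging strictly.

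Next I would set up the $4$-colouring. With $I_n^i=I_{4n+i}\cup I_{4n+i+1}$, each $\bar I^i$ ($i<4$) is a sparse sequence (consecutive $2$-blocks of $\bar I^i$ are separated by two intervals of $\bar I$), every interval $I_k$ lies in exactly two of the $2$-blocks $\{I_n^i\}$, and every consecutive pair $I_k\cup I_{k+1}$ lies in exactly one of them. For the statement without positivity this suffices: distribute the entries of $D$ among the colours, e.g.
\[
\begin{gathered}
a_0=\sum_j(P_{4j}+P_{4j+1})a(P_{4j}+P_{4j+1}),\quad a_2=\sum_j(P_{4j+2}+P_{4j+3})a(P_{4j+2}+P_{4j+3}),\\
a_1=\sum_j\big(P_{4j+1}aP_{4j+2}+P_{4j+2}aP_{4j+1}\big),\quad a_3=\sum_j\big(P_{4j+3}aP_{4j+4}+P_{4j+4}aP_{4j+3}\big).
\end{gathered}
\]
Each summand of $a_i$ is supported on the $2$-block $I_j^i$, hence lies in $\mathcal F(\bar I^i)_j$ (using property \ref{approxid1} to make the cut-downs exact), so $a_i\in\mathcal F(\bar I^i)$, and by the accounting above $\sum_{i<4}a_i=D\equiv a\pmod A$.

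The delicate point --- and where I expect the actual work --- is ``moreover, if $a$ is positive, so is each $a_i$'': for $a\geq 0$ the colours $i=0,2$ are already positive, but $a_1,a_3$ are purely off-diagonal, and $D$ itself need not be positive, so the positive decomposition must be read off $a$ directly and only reconciled with $a$ modulo $A$. Here I would first choose a quasi-central approximate unit $(u_k)$ for $A$ relative to $\{a\}$ (available since $A$ is separable), arranged --- using property \ref{approxid2} of the good approximate identity --- to be subordinate to a partition $\bar I$ as in the first step; from consecutive differences of $(u_k)$ one manufactures a partition of unity $(f_i)_{i<4}$ in $\mathcal M(A)$ with $\sum_{i<4}f_i=1$, each $f_i$ a positive contraction supported on $\bigcup_nI_n^i$, and $[f_i,a]\in A$ by quasi-centrality. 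Writing $(f_i^{1/2})_n$ for the piece of $f_i^{1/2}$ on $I_n^i$, put $a_i:=\sum_n(f_i^{1/2})_n\,a\,(f_i^{1/2})_n$. Then $a_i\in\mathcal F(\bar I^i)$ by construction; if $a\geq 0$ each summand is $\big(a^{1/2}(f_i^{1/2})_n\big)^*\big(a^{1/2}(f_i^{1/2})_n\big)\geq 0$, so $a_i\geq 0$; since $a$ is tridiagonal modulo $A$ (first step) and distinct $2$-blocks of $\bar I^i$ are at least three intervals apart, the cross terms satisfy $a_i\equiv f_i^{1/2}af_i^{1/2}\pmod A$; and finally $a-\sum_{i<4}a_i\equiv a-\sum_{i<4}f_i^{1/2}af_i^{1/2}=\sum_{i<4}f_i^{1/2}[f_i^{1/2},a]\in A$. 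The genuine obstacle is the simultaneous bookkeeping in this last step --- forcing the $f_i$ to be \emph{exactly} supported on the prescribed $2$-blocks while keeping $\sum_{i<4}f_i=1$ and all the commutators $[f_i,a]$ inside $A$ --- which is precisely the role played by conditions \ref{approxid1}--\ref{approxid2} in the definition of a good approximate identity; for these routine but lengthy verifications I would follow \cite[Lemma 2.6]{vignati2018rigidity} and the references to Elliott's work cited there.
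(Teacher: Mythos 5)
The paper does not actually prove this lemma: it is quoted verbatim from \cite[Lemma 2.6]{vignati2018rigidity} (with pointers to Elliott and to \cite[Lemma 9.7.6]{Fa:Combinatorial}), so your proposal has to stand on its own, and as written it has a genuine gap exactly where the definition of $\mathcal F(\bar I^i)$ bites. The tridiagonalisation step is fine (modulo the harmless index slip that to dominate $(1-e_{\max I_{m+1}})aP_m$ you need the estimate with $e_{\max I_m+1}=e_{\min I_{m+1}}$ rather than $e_{\max I_m}$, since $e_{\min I_{m+1}}P_m=P_m$ while $e_{\max I_m}P_m\neq P_m$). The gap is the claim that your explicit $a_i$ lie in $\mathcal F(\bar I^i)$ ``using property \ref{approxid1} to make the cut-downs exact''. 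Membership in $\mathcal F(\bar I^i)_j$ demands the exact identities $q_{I^i_j}x=xq_{I^i_j}=x$, and the $q$'s are not projections: $q_{I^0_j}\bigl(q_{I^0_j}aq_{I^0_j}\bigr)\neq q_{I^0_j}aq_{I^0_j}$ in general, and for the off-diagonal summands one computes $q_{I^1_j}\bigl(P_{4j+1}aP_{4j+2}\bigr)=\bigl(P_{4j+1}-e_{\min I_{4j+1}-1}+e_{\min I_{4j+1}-1}^2\bigr)aP_{4j+2}$, which differs from $P_{4j+1}aP_{4j+2}$ by a term that is in no way small. What \ref{approxid1} really gives is an exact unit only with a one-index margin, $q_{[m-1,n+1]}q_{[m,n]}=q_{[m,n]}$; so a cut-down $q_Jaq_J$ is guaranteed to lie in $\mathcal F(\bar I^i)_n$ when $\min I^i_n<\min J$ and $\max J<\max I^i_n$, whereas in your assignment every summand's support fills its entire $2$-block, leaving no margin anywhere. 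Hence the decomposition you display does not land in the sets $\mathcal F(\bar I^i)$ required by the statement (and needed later, when the coordinatewise lifting machinery is applied). Repairing this is not cosmetic: one has to tridiagonalise with respect to a finer partition and reassign the entries so that each sits strictly inside its colour block with room to spare, redoing the telescoping so that the four colours still sum to $a$ modulo $A$ -- and that bookkeeping is precisely the content of the cited lemma.

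The ``moreover'' clause has the same issue and is, in the end, not proved: in the quasicentral-approximate-unit sketch the $f_i$ must be \emph{exactly} supported (again with margins) on the blocks of $\bar I^i$ while simultaneously satisfying $\sum_{i<4}f_i=1$ and $[f_i,a]\in A$, and you explicitly defer this verification to \cite[Lemma 2.6]{vignati2018rigidity}, i.e.\ to the very result being proved. So while the overall strategy (tridiagonalisation, four sparse colours, square-root cut-downs for positivity) is the correct classical route, the proposal is incomplete at its crux: the exact-support requirement defining $\mathcal F(\bar I^i)$ is never actually met.
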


\begin{prp}\label{prop:largeker}
The projection $p$ given in \eqref{theP} commutes with the range of $\Phi$, and the kernel of $\Phi_{1-p}$ is a nonmeagre ideal.
\end{prp}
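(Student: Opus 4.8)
The plan is to establish the two assertions one after the other. First I would show that $p$ commutes with the range of $\Phi$: fix $a\in\mathcal M(A)$, and use the stratification Lemma~\ref{lem:stratification} to get a partition $\bar I$ of $\bbN$ into consecutive finite intervals and elements $a_i\in\mathcal F(\bar I^i)$ for $i<4$ with $a-\sum_{i<4}a_i\in A$. Since $\pi_A$ and $\Phi$ are linear and $\pi_B\circ\tilde\Phi=\Phi\circ\pi_A$, this gives $\Phi(\pi_A(a))=\sum_{i<4}\pi_B(\tilde\Phi(a_i))$. Each $\bar I^i$ is a sparse sequence, so Lemma~\ref{lemma:commuting}\eqref{commuting13} applied to $a_i\in\mathcal F(\bar I^i)$ yields $[r,\tilde\Phi(a_i)]\in B$, i.e.\ $[p,\pi_B(\tilde\Phi(a_i))]=0$ in $\mathcal Q(B)$; summing over $i<4$ gives $[p,\Phi(\pi_A(a))]=0$. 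As $a$ was arbitrary, $p$ commutes with the range of $\Phi$, and therefore the cut-down $\Phi_{1-p}\colon\mathcal Q(A)\to(1-p)\mathcal Q(B)(1-p)$, $x\mapsto(1-p)\Phi(x)(1-p)=(1-p)\Phi(x)$, is a $^*$-homomorphism; its kernel is then a closed two-sided ideal of $\mathcal Q(A)$, and $\SJ:=\{a\in\mathcal M(A)\mid\pi_A(a)\in\ker\Phi_{1-p}\}$ is an ideal of $\mathcal M(A)$ containing $A$.

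Next I would prove that $\SJ$ is nonmeagre, which by Definition~\ref{def:noncommnonmeagre} is exactly what ``$\ker\Phi_{1-p}$ is a nonmeagre ideal'' means. By Lemma~\ref{lem:approxid1} it suffices to show, for the fixed good approximate identity $(e_n)$, that every partition $\bar J=(J_n)$ of $\bbN$ into consecutive finite intervals admits an infinite $L\subseteq\bbN$ with $\sum_{n\in L}q_{J_n}\in\SJ$; writing $J_L=\bigcup_{n\in L}J_n$ we have $\sum_{n\in L}q_{J_n}=q_{J_L}=\sum_{m\in J_L}q_m$. The ideal $\SI$ of Lemma~\ref{lem:liftrig} is everywhere nonmeagre, hence nonmeagre, so by the classical characterisation of nonmeagreness due to Jalali--Naini and Talagrand (\cite[\S3.10]{Fa:AQ}) there is an infinite $L$ with $J_L\in\SI$. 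For such $L$, the defining property of $\SI$ gives $\Phi(\pi_A(q_{J_L}))=p_{J_L}$; and since $r_nr_m=0$ whenever $|n-m|\geq2$, Lemma~\ref{lem:pworks1}\eqref{projcond1} applied to $S=J_L$ yields $r\,r_{J_L}=_B r_{J_L}$, hence $p\,p_{J_L}=p_{J_L}$, and taking adjoints $p_{J_L}\,p=p_{J_L}$, so $p_{J_L}=p\,p_{J_L}\,p\in p\mathcal Q(B)p$. Therefore $(1-p)\Phi(\pi_A(q_{J_L}))(1-p)=(1-p)p_{J_L}(1-p)=0$, i.e.\ $\pi_A(q_{J_L})\in\ker\Phi_{1-p}$ and $q_{J_L}\in\SJ$, as required; Lemma~\ref{lem:approxid1} then gives that $\SJ$ is nonmeagre.

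The step needing the most care is the commutation statement. The set-theoretic lift $\tilde\Phi$ is neither additive nor multiplicative, so one cannot bound $[r,\tilde\Phi(a)]$ directly for a general $a$; the role of Lemma~\ref{lem:stratification} is precisely to reduce to elements supported on sparse sequences, where Lemma~\ref{lemma:commuting} applies — and that lemma, whose proof runs the genuine nonmeagreness arguments, is the real engine here — while it is the linearity of $\Phi$ (not of $\tilde\Phi$) that lets one recombine the four pieces. Everything in the second half is then elementary bookkeeping with the orthogonality relations $r_nr_m=0$ for $|n-m|\geq2$ and the identity $p^2=p$ already established in Lemma~\ref{lem:pworks1}.
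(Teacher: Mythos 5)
Your proof is correct and follows essentially the same route as the paper: commutation of $p$ with the range of $\Phi$ via Lemma~\ref{lemma:commuting}\eqref{commuting13} combined with the stratification Lemma~\ref{lem:stratification}, and nonmeagreness of $\ker\Phi_{1-p}$ via Lemma~\ref{lem:approxid1}, the ideal $\SI$ of Lemma~\ref{lem:liftrig}, and the Jalali--Naini/Talagrand-type selection of an infinite $L$ with $\bigcup_{n\in L}J_n\in\SI$. The only difference is cosmetic: you spell out, using Lemma~\ref{lem:pworks1}\eqref{projcond1}, why $p\,p_{J_L}=p_{J_L}$ and hence $(1-p)p_{J_L}(1-p)=0$, a step the paper leaves implicit in the phrase ``$\Phi$ and $\Phi_p$ agree on $\sum_{n\in S}q_n$''.
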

\begin{proof}
Since $\Phi(\pi_A(a))$ commutes with $p$ whenever $a$ is supported on some sparse sequence (Lemma~\ref{lemma:commuting}) and every $a\in\mathcal M(A)$ can be written (modulo $A$) as a sum of 4 elements each supported on some sparse sequence (Lemma~\ref{lem:stratification}), then $p$ commutes with the range of $\Phi$. In particular both $\Phi_p$ and $\Phi_{1-p}$ are $^*$-homomorphisms.

Let us show that $\Phi_{1-p}$ has nonmeagre kernel. By Lemma~\ref{lem:approxid1} it is enough to check nonmeagreness on the good approximate identity $(e_n)$. Let $\bar J=(J_n)$ be a partition of $\N$ into finite consecutive intervals. By Lemma~\ref{lem:liftrig} there is a nonmeagre ideal $\SI$ on $\N$ which contains all finite sets such that if $S\in \SI$ then $\Phi$ and $\Phi_p$ agree on $\sum_{n\in S} q_n$, meaning that $\sum_{n\in S} q_n\in \ker(\Phi_{1-p})$. Since $\SI$ is nonmeagre, there is an infinite $L$ such that $\bigcup_{n\in L} J_n\in \SI$, meaning that $\sum_{n\in L} q_{J_n}\in \ker(\Phi_{1-p})$, as required.
\end{proof}

We are ready to conclude the proof of Theorem~\ref{thm:ncwep}: we have shown conditions~\ref{wep01} and \ref{wep02}, and are left to show that $\Phi_p$ is Borel. The first step is to get rid of the nonmeagre ideal $\SI$. 
We let $\tilde\Phi_p\colon\mathcal M(A)\to r\mathcal M(B)r$ be a lift for $\Phi_p$.

\begin{lemma}
For every $S\subseteq\bbN$ we have that $r_S$ lifts $\Phi_p(\pi_A(q_S))$.
\end{lemma}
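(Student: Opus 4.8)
The plan is to reduce the statement to showing that $r\tilde\Phi(q_S)=_Br_S$ for every $S\subseteq\bbN$. Indeed, since $p$ commutes with the range of $\Phi$ (Proposition~\ref{prop:largeker}), $\Phi_p$ is a $^*$-homomorphism and $\Phi_p(\pi_A(q_S))=p\,\Phi(\pi_A(q_S))=\pi_B\bigl(r\tilde\Phi(q_S)\bigr)$, so $r_S$ lifts $\Phi_p(\pi_A(q_S))$ precisely when $r\tilde\Phi(q_S)-r_S\in B$. This already holds for $S\in\SI$: by Lemma~\ref{lem:liftrig}\eqref{cond2} we have $\tilde\Phi(q_S)=_Br_S$, and $rr_S=_B\sum_{n\in S}r_{[n-1,n+1]}r_n=_Br_S$ by Lemma~\ref{lem:pworks1}\eqref{projcond1}. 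The entire content is therefore to remove the hypothesis $S\in\SI$.

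Fix $S$ and split it by residues modulo $5$: $S=\bigsqcup_{c<5}S_c$ with $S_c=\{n\in S\mid n\equiv c\bmod 5\}$. Since $q_S=\sum_{c<5}q_{S_c}$ (an exact equality of index sets) and $\Phi$ is additive, $\tilde\Phi(q_S)=_B\sum_{c<5}\tilde\Phi(q_{S_c})$, while $r_S=\sum_{c<5}r_{S_c}$; so it suffices to prove $r\tilde\Phi(q_{S_c})=_Br_{S_c}$ for each $c$. Fix $c$. The role of the modulus $5$ is that, putting $I_k=[5k+c-1,5k+c+1]$ for $k$ large enough, the sequence $\bar I=(I_k)_k$ is a sparse sequence in the sense of Notation~\ref{notation2}, the positive element $a:=\sum_k q_{5k+c}$ lies in $\mathcal F(\bar I)$ with $a_{\{k\}}=q_{5k+c}$, and $r_{I_k}=r_{[5k+c-1,5k+c+1]}$; moreover $r_{I_k}r_{I_{k'}}=0$ for $k\ne k'$ since the corresponding indices differ by at least $3$. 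Writing $S_c=\{5k+c\mid k\in T\}$, we have $q_{S_c}=_Aa_T$.

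Now apply Lemma~\ref{lemma:commuting}\eqref{commuting14}, both at $T$ and at singletons. At $T$ it gives $r\tilde\Phi(q_{S_c})=_Br_{I_T}\tilde\Phi(a)=\sum_{k\in T}r_{I_k}\tilde\Phi(a)$ (a strict sum, by strict continuity of multiplication), while Lemma~\ref{lem:pworks1}\eqref{projcond1} applied to the set $S_c$, together with $r_nr_m=0$ for $|n-m|\ge 2$, gives $r_{S_c}=_B\sum_{k\in T}r_{I_k}r_{5k+c}$. Subtracting, $r\tilde\Phi(q_{S_c})-r_{S_c}=_B\sum_{k\in T}d_k$ with $d_k:=r_{I_k}\bigl(\tilde\Phi(a)-r_{5k+c}\bigr)$. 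Each $d_k$ lies in $B$: applying \eqref{commuting14} at $\{k\}$ gives $r_{I_k}\tilde\Phi(a)=_Br\tilde\Phi(q_{5k+c})$, and $\tilde\Phi(q_{5k+c})=_Br_{5k+c}$ since $\{5k+c\}$ is finite hence in $\SI$ (Lemma~\ref{lem:liftrig}\eqref{cond2}), so $r_{I_k}\tilde\Phi(a)=_Brr_{5k+c}=r_{I_k}r_{5k+c}$. Finally $d_k^{\,*}d_{k'}=\bigl(\tilde\Phi(a)-r_{5k+c}\bigr)^{*}r_{I_k}r_{I_{k'}}\bigl(\tilde\Phi(a)-r_{5k'+c}\bigr)=0$ for $k\ne k'$, so $\bigl\lVert\sum_{k\in E}d_k\bigr\rVert^{2}=\bigl\lVert\sum_{k\in E}d_k^{\,*}d_k\bigr\rVert$ is monotone in finite sets $E$.

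It remains to show $\sum_{k\in T}d_k\in B$; since the partial sums lie in $B$ it is enough that they be norm-Cauchy. If not, there are $\varepsilon>0$ and pairwise disjoint finite blocks $F_j\subseteq T$ with $\min F_j\to\infty$ and $\bigl\lVert\sum_{k\in F_j}d_k\bigr\rVert>\varepsilon$. The sets $\{5k+c\mid k\in F_j\}$ partition a subset of $\bbN$ into consecutive finite intervals, so by nonmeagreness of $\SI$ (the Jalali--Naini--Talagrand characterisation) we may pass to an infinite subfamily whose union $W\subseteq T$ satisfies $\{5k+c\mid k\in W\}\in\SI$. For this $W$, exactly the computation of the previous paragraph together with $\{5k+c\mid k\in W\}\in\SI$ gives $\sum_{k\in W}d_k=_Br\tilde\Phi\bigl(q_{\{5k+c:\,k\in W\}}\bigr)-r_{\{5k+c:\,k\in W\}}\in B$, whence $\bigl\lVert\sum_{k\in W,\,k\ge M}d_k\bigr\rVert\to 0$ as $M\to\infty$; but once $\min F_j\ge M$ we have $F_j\subseteq\{k\in W\mid k\ge M\}$, and monotonicity gives $\bigl\lVert\sum_{k\in F_j}d_k\bigr\rVert\le\bigl\lVert\sum_{k\in W,\,k\ge M}d_k\bigr\rVert$, contradicting $\bigl\lVert\sum_{k\in F_j}d_k\bigr\rVert>\varepsilon$. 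Hence $r\tilde\Phi(q_{S_c})=_Br_{S_c}$ for every $c$, and summing over $c$ completes the proof. I expect this last assembly to be the main obstacle: Lemma~\ref{lemma:commuting}\eqref{commuting14} only pins down $r\tilde\Phi(q_S)$ on the nonmeagre ideal $\SI$ and on singletons, and passing to an arbitrary $S$ forces one to combine the one-sided orthogonality $d_k^{\,*}d_{k'}=0$ with the Jalali--Naini--Talagrand characterisation of nonmeagreness in order to rule out a "spread-out" obstruction to norm convergence of $\sum_k d_k$.
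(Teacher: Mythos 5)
Your argument is correct in substance, but it takes a genuinely different route from the paper's. After the (same) reduction to showing $r\tilde\Phi(q_S)=_B r_S$ and a splitting of $S$ into residue classes (the paper uses mod $3$, you use mod $5$; this is cosmetic), you write the error as a strictly convergent sum $\sum_{k\in T}d_k$ of elements of $B$ with $d_k^*d_{k'}=0$ for $k\neq k'$, and you rule out failure of norm-Cauchyness by using nonmeagreness of $\SI$ to extract a subfamily whose union $W$ satisfies $\{5k+c\mid k\in W\}\in\SI$, so that $\sum_{k\in W}d_k\in B$, and then comparing norms of orthogonal blocks. The paper instead considers the single set $\SJ=\{T\mid r_{\bigcup_{k\in T}I_k}(r_S-\tilde\Phi_p(q_S))\in B\}$, checks it is a Borel ideal containing all finite sets and the nonmeagre ideal $\SI'=\{T\mid\bigcup_{k\in T}I_k\in\SI\}$, and concludes $\SJ=\mathcal P(\bbN)$ from the dichotomy for Borel ideals (\cite[Corollary 3.10.2]{Fa:AQ}); plugging in $T=\bbN$ finishes. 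Both proofs rest on Lemma~\ref{lemma:commuting}\eqref{commuting14}, Lemma~\ref{lem:pworks1}\eqref{projcond1} and the lifting on $\SI$ from Lemma~\ref{lem:liftrig}\eqref{cond2}; yours replaces the descriptive-set-theoretic dichotomy by hands-on tail estimates, which makes it more elementary but longer, while the paper's is softer and shorter.

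One step of yours must be justified explicitly: from ``$b:=\sum_{k\in W}d_k\in B$'' alone it does \emph{not} follow that the tails $t_M=\sum_{k\in W,\,k\ge M}d_k$ tend to $0$ in norm (in $\mathcal M(c_0)$ the telescoping series $\sum_k(\chi_{\{k\}}-\chi_{\{k+1\}})$ converges strictly to an element of $c_0$ while every tail has norm one). Here the claim is true, but only because of the orthogonality you set up: since the partial sums are bounded and multiplication is jointly strictly continuous on bounded sets, $t_M^*t_M=\sum_{k\in W,\,k\ge M}d_k^*d_k=b^*t_M$, and as $t_M\to0$ strictly and $b\in B$ this gives $\norm{t_M}^2=\norm{b^*t_M}\to0$. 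With this line inserted your contradiction goes through (note also that the sets $\{5k+c\mid k\in F_j\}$ are not intervals, so one should enclose them in disjoint intervals and use that $\SI$ is downward closed, the same maneuver the paper uses in Lemma~\ref{lemma:commuting}; and only the blocks $F_j$ from the chosen infinite subfamily lie inside $W$, which is all you need).
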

\begin{proof}
If $S$ is finite, $r_S\in B$ and $\Phi_p(\pi_A(q_S))=0$, so there is nothing to prove. Fix an infinite $S\{n_k\mid k\in\bbN\}$ where $n_k<n_{k+1}$. By partitioning $S$ by its equivalence classes in the$\mod 3$ relation, we can assume that $n_k+2<n_{k+1}$. Let $I_k=[n_k-1,n_k+1]$, so that $q_S$ is supported on $(I_n)$. 

With $\SI\subseteq\mathcal P(\bbN)$ the nonmeagre ideal from Lemma~\ref{lem:liftrig}, let
\[
\SI'=\{T\subseteq\mathbb N\mid \bigcup_{k\in T}I_k\in\SI\}.
\]
Since $\SI$ is nonmeagre and $(I_n)$ is a sequence of consecutive disjoint finite intervals in $\bbN$, $\SI'$ is a nonmeagre ideal.
Let
\[
\SJ=\{T\subseteq\bbN\mid r_{\bigcup_{k\in T}I_k}(r_S-\tilde\Phi_p(q_S))\in B\}.
\]
Note that for every $T\subseteq\bbN$ we have that
\begin{equation}\label{eqn2}
r_{\bigcup_{k\in T}I_k}r_S=_Br_{\{n_k\mid k\in T\}}\text{ and }r_{\bigcup_{k\in T}I_k}\tilde\Phi_p(q_S)=_{B}\tilde\Phi_p(q_{\{n_k\mid k\in T\}}), 
\end{equation}
where the last equality comes from Lemma~\ref{lemma:commuting}\eqref{commuting14}.

Since $r_S-\tilde\Phi_p(q_S)$ is fixed, the association $T\mapsto r_{\bigcup_{k\in T}I_k}$ is (product-strictly) continuous, and $B\subseteq\mathcal M(B)$ is Borel, then $\SJ$ is Borel. Moreover, if $T\in \SI'$, then $r_{\{n_k\mid k\in T\}}=_B\tilde\Phi_p(q_{\{n_k\mid k\in T\}})$, which implies that $T\in\SJ$. This implies that $\SJ$ is a Borel nonmeagre ideal which includes all finite sets. By \cite[Corollary 3.10.2]{Fa:AQ}, $\SJ=\mathcal P(\bbN)$.
Applying equation~\eqref{eqn2} to $T=\mathbb N$, we have that $r_S=_B\tilde\Phi_p(q_S)$. This is the thesis.
\end{proof}


By Lemma~\ref{lemma:commuting}, if $a\in\mathcal F(\bar I)$, then $\tilde\Phi_p(a)=_B\sum r_{I_n}\tilde\Phi_p(a)r_{I_n}$ and 
\[
\lim_n\norm{r_{I_n}\tilde\Phi_p(a)-r_{I_n}\tilde\Phi_p(a)r_{I_n}}+\norm{r_{I_n}\tilde\Phi_p(a)-\tilde\Phi_p(a)r_{I_n}}\to 0,
\]
meaning that 
\[
\tilde\Phi_p(a)\in\prod \mathcal D(\bar I)_n/\bigoplus\mathcal D(\bar I)_n.
\]
Restricting $\Phi_p$ to $\pi_A[\mathcal F(\bar I)]$ we obtain a function
\[
\prod \mathcal F(\bar I)_n/\bigoplus \mathcal F(\bar I)_n\to \prod \mathcal D(\bar I)_n/\bigoplus\mathcal D(\bar I)_n.
\]
This function has the following property: for every $S\subseteq\bbN$ and $a=\sum a_n$ and $a'=\sum a_n'$ in $ \mathcal F(\bar I)$,
\[
\text{if }\pi_A(a_S)=\pi_A(a_S') \text{ then } p_S\Phi_p(\pi_A(a))=p_S\Phi_p(\pi_A(a')).
\]
In other words, the function is \emph{coordinate respecting} according to Definition~2.1 in \cite{TrivIsoMetric}. The main result of \cite{TrivIsoMetric} asserts that these must necessarily come from sequences of maps $\mathcal F(\bar I)_n\to \mathcal D(\bar I)_n$. The proposition below formalises this discussion; its proof derives from the main result of \cite{TrivIsoMetric}.

\begin{lemma}\label{lem:usingDBV}
Assume $\OCA$ and $\MA$. Let $\bar I$ be a sparse sequence. There are functions $\alpha_{\bar I,n}\colon \mathcal F(\bar I)_n\to \mathcal D(\bar I)_n$ such that
\[
\alpha_{\bar I}:=\sum\alpha_{\bar I,n}\colon\mathcal F(\bar I)\to \mathcal D(\bar I)
\]
lifts $\Phi_p$ on $\mathcal F(\bar I)$.
Moreover, since each $\mathcal F(\bar I)_n$ and each $\mathcal D(\bar I)_n$ is separable and the product topology on $\mathcal F(\bar I)$ coincides with the restriction of the strict topology on $\mathcal M(A)$, by picking a countable $2^{-n}$-dense subset on $\mathcal D(\bar I)_n$ we can assume that each $\alpha_{\bar I,n}$ takes only countably many values and it is (norm-norm) Borel, so that $\alpha_{\bar I}$ is (strict-strict) Borel. \qed
\end{lemma}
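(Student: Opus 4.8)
The plan is to recognise the restriction of $\Phi_p$ to $\pi_A[\mathcal F(\bar I)]$ as a coordinate respecting map between reduced products and then quote the trivialisation theorem of \cite{TrivIsoMetric}, after which the ``moreover'' clause is a routine separability reduction. The reduction to a coordinate respecting map has essentially been carried out already in the discussion preceding the statement: for $a=\sum a_n\in\mathcal F(\bar I)$, Lemma~\ref{lemma:commuting} gives $\tilde\Phi_p(a)=_B\sum_n r_{I_n}\tilde\Phi_p(a)r_{I_n}\in\prod\mathcal D(\bar I)_n/\bigoplus\mathcal D(\bar I)_n$, so $\Phi_p$ restricts to a $^*$-homomorphism $\prod\mathcal F(\bar I)_n/\bigoplus\mathcal F(\bar I)_n\to\prod\mathcal D(\bar I)_n/\bigoplus\mathcal D(\bar I)_n$ (recall $\pi_A[\mathcal F(\bar I)]=\prod\mathcal F(\bar I)_n/\bigoplus\mathcal F(\bar I)_n$, so the map is defined on the whole reduced product); and using the identity $r_S=_B\tilde\Phi_p(q_S)$ proved above together with Lemma~\ref{lemma:commuting}\eqref{commuting14} one checks that if $\pi_A(a_S)=\pi_A(a'_S)$ then $p_S\Phi_p(\pi_A(a))=p_S\Phi_p(\pi_A(a'))$, i.e. the map is coordinate respecting in the sense of Definition~2.1 of \cite{TrivIsoMetric}.

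With this in hand, the main step is to invoke the main result of \cite{TrivIsoMetric}: under $\OCA$ and $\MA$, a coordinate respecting map between reduced products of sequences of separable \cstar-algebras is induced by a sequence of maps on the coordinate algebras. As each $\mathcal F(\bar I)_n\subseteq A$ and each $\mathcal D(\bar I)_n\subseteq B$ is separable, this produces functions $\alpha_{\bar I,n}\colon\mathcal F(\bar I)_n\to\mathcal D(\bar I)_n$ whose product $\alpha_{\bar I}=\sum_n\alpha_{\bar I,n}$ lifts $\Phi_p$ on $\mathcal F(\bar I)$, which is the first assertion. I expect this invocation to be essentially the only obstacle: all the difficulty is packaged inside \cite{TrivIsoMetric}, and the preparatory work of this section (in particular Lemma~\ref{lemma:commuting} and the analysis of the elements $r_S$) exists precisely to guarantee that its hypotheses are met.

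For the measurability clause, fix for each $n$ a countable $2^{-n}$-dense subset $D_n\subseteq\mathcal D(\bar I)_n$ with a fixed enumeration and replace $\alpha_{\bar I,n}(a)$ by the first element of $D_n$ lying within $2^{-n}$ of it. The new map takes only countably many values and is norm-norm Borel, since the preimage of each value is a Boolean combination of norm-open balls; and since the modification changes $\alpha_{\bar I,n}(a)$ by less than $2^{-n}$, the difference of the old and new products lies in $\bigoplus_n\mathcal D(\bar I)_n=\mathcal D(\bar I)\cap B$, so it vanishes modulo $B$ and the new $\alpha_{\bar I}$ still lifts $\Phi_p$ on $\mathcal F(\bar I)$. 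Finally, since the product topology on $\mathcal F(\bar I)$ (respectively $\mathcal D(\bar I)$) agrees with the restriction of the strict topology on $\mathcal M(A)$ (respectively $\mathcal M(B)$), a countable product of Borel coordinate maps between the corresponding Polish spaces is strict-strict Borel, which establishes the moreover clause.
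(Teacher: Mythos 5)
Your proposal follows the paper's argument essentially verbatim: the paper likewise observes (via Lemma~\ref{lemma:commuting} and the identity $r_S=_B\tilde\Phi_p(q_S)$) that $\Phi_p$ restricted to $\pi_A[\mathcal F(\bar I)]$ is a coordinate respecting map into $\prod\mathcal D(\bar I)_n/\bigoplus\mathcal D(\bar I)_n$, invokes the main result of \cite{TrivIsoMetric} to obtain the coordinate maps $\alpha_{\bar I,n}$, and disposes of the measurability clause by the same $2^{-n}$-dense discretization. The only caveat is that your assertion that preimages of values of the discretized maps are Boolean combinations of norm-balls tacitly presupposes some definability of the maps supplied by \cite{TrivIsoMetric}, but the paper's own treatment is no more detailed on this point.
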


The next, and last, step of the proof is to uniformise our well-behaved local liftings. We closely follow the strategy of the end of \S3 in \cite{vignati2018rigidity}. Even better, by making sure to translate notation when appropriate, we can even skip some of the very technical proofs and refer directly to \cite{vignati2018rigidity}. The following is Lemma 3.10 in \cite{vignati2018rigidity}.
\begin{lemma}\label{lemma:agreeing}
Let $\bar I$ and $\bar J$ be sparse sequences, and suppose that $\alpha_{\bar I}=\sum\alpha_{\bar I,n}\colon\mathcal F(\bar I)\to\mathcal D(\bar I)$ and $\alpha_{\bar J}=\sum\alpha_{\bar J,n}\colon\mathcal F(\bar J)\to\mathcal D(\bar J)$ are liftings for $\Phi_p\restriction \pi_A[\mathcal F(\bar I)]$ and $\Phi_p\restriction \pi_A[\mathcal F(\bar J)]$ respectively. Let $\varepsilon>0$. Then there is $n>0$ such that for every contraction $x\in \mathcal F(\bar I)\cap\mathcal F(\bar J)$ with $(\sum_{i\leq n}q_i)x=0$ we have that 
$\norm{\alpha_{\bar I}(x)-\alpha_{\bar J}(x)}<\varepsilon$. \qed
\end{lemma}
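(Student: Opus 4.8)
The plan is to argue by contradiction. Fix $\varepsilon>0$ and suppose that for every $n$ there is a contraction $x\in\mathcal F(\bar I)\cap\mathcal F(\bar J)$ with $(\sum_{i\leq n}q_i)x=0$ and $\norm{\alpha_{\bar I}(x)-\alpha_{\bar J}(x)}\geq\varepsilon$. The first thing I would record is that, since $\alpha_{\bar I}$ and $\alpha_{\bar J}$ both lift $\Phi_p$ on their respective domains, for every $x\in\mathcal F(\bar I)\cap\mathcal F(\bar J)$ one has $\pi_B(\alpha_{\bar I}(x))=\Phi_p(\pi_A(x))=\pi_B(\alpha_{\bar J}(x))$, hence $\alpha_{\bar I}(x)-\alpha_{\bar J}(x)\in B$; thus any bad witness has its ``large difference'' concentrated on a finite corner $e^B_M\,\cdot\,e^B_M$ of $B$.

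Next I would build, by recursion, a sequence $(y_k)$ of contractions in $\mathcal F(\bar I)\cap\mathcal F(\bar J)$ that are pairwise orthogonal, are finitely supported, have their $\bar I$- and $\bar J$-supports marching off to infinity and separated by gaps of size larger than $2$, and still satisfy $\norm{\alpha_{\bar I}(y_k)-\alpha_{\bar J}(y_k)}\geq\varepsilon/2$. To replace a given witness $x$ by a finitely supported one: because the ranges $\mathcal D(\bar I)_n$ and $\mathcal D(\bar J)_m$ are supported on $B$-chunks $[j_n,k_n]$ with $j_n\to\infty$ (from the construction of the $r_n$'s in Lemma~\ref{lem:liftrig}), compressing by a fixed $e^B_M$ only involves finitely many of the blocks $\alpha_{\bar I,n}(x_n)$ and $\alpha_{\bar J,m}(x'_m)$; picking $M$ with $\norm{e^B_M(\alpha_{\bar I}(x)-\alpha_{\bar J}(x))e^B_M}\geq\varepsilon/2$ and then passing to a finite cut-down $q_{[1,L]}xq_{[1,L]}$ that is simultaneously a block-truncation of $x$ along $\bar I$ and along $\bar J$ yields a finitely supported contraction in $\mathcal F(\bar I)\cap\mathcal F(\bar J)$ with the same $e^B_M$-localised difference. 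Using the hypothesis for larger and larger $n$ forces the supports to move to infinity, and spacing them out makes the finitely many $\bar I$- and $\bar J$-blocks carrying consecutive $y_k$'s disjoint and widely separated.

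Finally I would set $y=\sum_k y_k$. Orthogonality of the $y_k$ and disjointness of their supports give that $y$ is again a contraction in $\mathcal F(\bar I)\cap\mathcal F(\bar J)$; and since $\alpha_{\bar I}=\sum\alpha_{\bar I,n}$ and $\alpha_{\bar J}=\sum\alpha_{\bar J,m}$ act block-by-block, $\alpha_{\bar I}(y)=_B\sum_k\alpha_{\bar I}(y_k)$ and $\alpha_{\bar J}(y)=_B\sum_k\alpha_{\bar J}(y_k)$, with summands living on pairwise almost orthogonal $B$-chunks (here one uses $r_nr_m=0$ whenever $|n-m|\geq2$, exactly as in Lemma~\ref{lemma:commuting}). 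This gives
\[
\norm{\pi_B(\alpha_{\bar I}(y)-\alpha_{\bar J}(y))}=\limsup_k\norm{\alpha_{\bar I}(y_k)-\alpha_{\bar J}(y_k)}\geq\varepsilon/2>0,
\]
contradicting $\pi_B(\alpha_{\bar I}(y))=\Phi_p(\pi_A(y))=\pi_B(\alpha_{\bar J}(y))$, as $y\in\mathcal F(\bar I)\cap\mathcal F(\bar J)$ and both maps lift $\Phi_p$ there. I expect the main obstacle to be the bookkeeping in the middle step: arranging a \emph{simultaneous} finite block-truncation for the two misaligned sparse sequences $\bar I$ and $\bar J$ while retaining a definite gap in the difference, and then separating consecutive witnesses enough that their $r$-supports are genuinely almost orthogonal. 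This is precisely the computation carried out in the proof of \cite[Lemma~3.10]{vignati2018rigidity}, which I would follow essentially verbatim, translating the $\ell_\infty$-style notation used there into the present $\mathcal F(\bar I)$-notation.
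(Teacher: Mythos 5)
Your proposal is correct and follows essentially the same route as the paper, which offers no proof of its own here but simply quotes the result as \cite[Lemma 3.10]{vignati2018rigidity}; the cited proof is exactly the contradiction argument you sketch (finitely supported, pairwise orthogonal witnesses with supports escaping to infinity, summed and compared against the lifting property, using that the image blocks live in $B$-chunks $[j_n,k_n]$ with $j_n\to\infty$). The bookkeeping you flag — simultaneous truncation along the two misaligned sparse sequences and controlling the terms $\alpha_{\bar I,n}(0)$, which need not vanish but must tend to $0$ in norm since $\sum_n\alpha_{\bar I,n}(0)\in B$ — is precisely what that reference carries out, so deferring to it is consistent with what the paper itself does.
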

By Lemma~\ref{lemma:agreeing}, if $\alpha_{\bar I}$ and $\alpha_{\bar J}$ are liftings for $\Phi_p$ on $\mathcal F(\bar I)$ and $\mathcal F(\bar J)$ respectively, we can modify $\alpha_{\bar J}$ so that it is still a lifting with the property as in Lemma~\ref{lem:usingDBV} and it agrees with $\alpha_{\bar I}$ on the intersection of their domains. 
More notation is needed:
\begin{notation}\label{notation3}
Let $\mathbb P$ be the poset of partitions of $\mathbb N$ into consecutive finite intervals. Recall that if $\bar I\in\mathbb P$ and $i<4$ the sparse sequence $\bar I^i$ is defined by $I^i_n=I_{4n+i}\cup I_{4n+i+1}$.

We order $\mathbb P$ by setting $\bar I\leq\bar J$ if there is $n$ such that for all $m\geq n$ there is $k$ such that $I_n\cup I_{n+1}\subseteq J_k\cup J_{k+1}$. (This order was denoted by $\leq_1$ in \cite{vignati2018rigidity} and by $\leq^*$ in \cite{Fa:Combinatorial}). $(\mathbb P,\leq)$ is a $\sigma$-directed partially ordered set. Moreover, for $\bar I,\bar J\in\mathbb P$, $\bar I\leq \bar J$ implies that
\[
\pi_A[\bigcup_{i<4}\mathcal F(\bar I^i)]\subseteq \pi_A[\bigcup_{i<4}\mathcal F(\bar J^i)].
\]
\end{notation}
Define
$\mathcal X=\{(\bar\alpha,\bar I)\},$
where
\begin{itemize}
\item $\bar I\in\mathbb P$,
\item $\bar\alpha=(\alpha^0,\alpha^1,\alpha^2,\alpha^3)$ where for each $i<4$, $\alpha^i\colon \mathcal F(\bar I^i)\to\mathcal D(\bar I^i)$ is a Borel lifting of $\Phi_p$ on $\mathcal F(\bar I^i)$
\item for every $i\neq j$, $\alpha^i$ and $\alpha^j$ agree on $\mathcal F(\bar I^i)\cap\mathcal F(\bar I^j)$.
\end{itemize}
By Lemma~\ref{lem:usingDBV}, for every $\bar I\in\mathbb P$ there is $\bar\alpha$ such that $(\bar I,\bar\alpha)\in\mathcal X$.

Elements of $\mathbb P$ can be viewed as strictly increasing functions $\mathbb N\to\bbN$. For a strictly increasing $f\in\mathbb N^\mathbb N$ such that $f(0)=0$ we can associate the partition $I_n=[f(n),f(n+1))$. Vice versa, if $\bar I=(I_n)\in\mathbb P$, we let $f\in\bbN^\bbN$ defined by $f(n)=\min I_n$. $\mathbb P$ is thus a subspace of the Polish space $\bbN^\bbN$. Fix now $\bar I\in\mathbb P$. Suppose that $\bar\alpha$ is a quadruple of maps where $\alpha^i\colon\mathcal F(\bar I^i)\to\mathcal D(\bar I^i)$, for $i<4$. As each $\mathcal F(\bar I^i)_n$ is a subset of $A$, it is separable, and we can thus see $\alpha^i$ as an element of the Polish space $\prod\mathcal F(\bar I^i)_n\to\prod \mathcal D(\bar I^i)_n$. This association gives $\mathcal X$ a separable metrizable topology $\tau$.

For $n\in\bbN$ we partition unordered pairs in $\mathcal X$ by setting
\[
[\mathcal X]^2=L_0^n\cup L_1^n
\]
where $\{(\bar I,\bar\alpha),(\bar J,\bar\beta)\}\in L_0^n$ if and only if there are $m\in\bbN$, $i,j<4$ and a contraction $x\in \mathcal F(\bar I^i)\cap \mathcal F(\bar J^j)$ with $(\sum_{k\leq m} q_i)x(\sum_{k\leq m} q_i)=x$ such that
\[
\norm{\alpha^i(x)-\beta^j(x)}>2^{-n}.
\]
Each $L_0^n$ is open when viewed as a subspace of the product $\mathcal X^2$ (when $X$ is given the topology $\tau$ discussed above). 

Comparing $\mathcal X$ and the partitions $[\mathcal X]^2=L_0^n\cup L_1^n$ with the equally named objects defined in Notation 3.9 in \cite{vignati2018rigidity}, we get the following, which is \cite[Lemma 3.11]{vignati2018rigidity}. (The cardinal $\mathfrak b$ is the least cardinality of a family in $\bbN^\bbN$ which is unbounded in the order of almost domination or, equivalently, the least cardinality of a $\leq$-unbounded set in $\mathbb P$.)
\begin{lemma}\label{lem:noL0hom}
If $\mathfrak b>\omega_1$ then there is no uncountable $L_0^n$-homogeneous set.\qed
\end{lemma}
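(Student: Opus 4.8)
The plan is to argue by contradiction in exactly the manner of \cite[Lemma 3.11]{vignati2018rigidity}, exploiting the hypothesis $\mathfrak b > \omega_1$ to bound the ``domains'' of an $L_0^n$-homogeneous family. Suppose $n$ is fixed and $\{(\bar I_\xi,\bar\alpha_\xi)\mid \xi<\omega_1\}$ is an uncountable $L_0^n$-homogeneous set in $\mathcal X$. Since $\mathfrak b>\omega_1$, the family $\{\bar I_\xi\mid \xi<\omega_1\}\subseteq\mathbb P\subseteq\bbN^\bbN$ is $\leq$-bounded; fix $\bar I\in\mathbb P$ with $\bar I_\xi\leq\bar I$ for all $\xi$ (after possibly shrinking to an uncountable subset, using that $(\mathbb P,\leq)$ is $\sigma$-directed). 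By the last sentence of Notation~\ref{notation3}, for each $\xi$ and each $i<4$ we then have $\pi_A[\mathcal F(\bar I_\xi^i)]\subseteq\pi_A[\bigcup_{j<4}\mathcal F(\bar I^j)]$, so all the liftings $\alpha^i_\xi$ may be compared against a single fixed quadruple of liftings $\bar\alpha$ with $(\bar I,\bar\alpha)\in\mathcal X$ (which exists by Lemma~\ref{lem:usingDBV}).

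The next step is to apply the coherence Lemma~\ref{lemma:agreeing} twice: once to the pair $(\bar I_\xi^i,\bar I^j)$ to see that $\alpha^i_\xi$ and $\alpha^j$ agree up to $2^{-n-2}$ on contractions in $\mathcal F(\bar I_\xi^i)\cap\mathcal F(\bar I^j)$ supported above some level $N_\xi\in\bbN$, and once more to control the pair $(\bar I_\xi^i,\bar I_\eta^j)$ through the common refinement inside $\bar I$. Pass to an uncountable $Z\subseteq\omega_1$ on which the integer $N_\xi=N$ is constant; this uses only the pigeonhole principle on $\bbN$. Then for any $\xi,\eta\in Z$, any $i,j<4$, and any contraction $x\in\mathcal F(\bar I_\xi^i)\cap\mathcal F(\bar I_\eta^j)$ supported entirely above level $\max(N, m)$ for the relevant $m$, the triangle inequality through $\alpha^j$ (or through $\alpha^i$, whichever is defined on $x$) gives $\norm{\alpha^i_\xi(x)-\alpha^j_\eta(x)}\leq \norm{\alpha^i_\xi(x)-\alpha^j(x)}+\norm{\alpha^j(x)-\alpha^j_\eta(x)}<2^{-n-1}<2^{-n}$; but the defining condition of $L_0^n$ requires precisely such an $x$ with $\norm{\alpha^i_\xi(x)-\alpha^j_\eta(x)}>2^{-n}$, and the support condition $(\sum_{k\leq m}q_k)x(\sum_{k\leq m}q_k)=x$ appearing there forces the witness $x$ to live at the bottom, which is exactly where $\alpha^j$ (applied at a high enough threshold) controls it. Choosing $\xi,\eta\in Z$ distinct contradicts $\{(\bar I_\xi,\bar\alpha_\xi),(\bar I_\eta,\bar\alpha_\eta)\}\in L_0^n$.

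The one point that needs care — and which I expect to be the main obstacle — is reconciling the support location in the definition of $L_0^n$ (a witness $x$ sitting \emph{below} level $m$, i.e.\ $(\sum_{k\leq m}q_k)x(\sum_{k\leq m}q_k)=x$) with the conclusion of Lemma~\ref{lemma:agreeing} (agreement on contractions with $(\sum_{i\leq n}q_i)x=0$, i.e.\ supported \emph{above}). The resolution, as in \cite{vignati2018rigidity}, is that for a fixed pair this is a genuine obstruction, but for an \emph{uncountable} homogeneous family it is not: any single witness $x$ is finitely supported, hence supported above the common level $N$ for all but boundedly many $\xi$, so by passing to an uncountable subset we may assume every relevant witness lies in the region where Lemma~\ref{lemma:agreeing} applies. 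Once the bookkeeping of the four indices $i<4$ and the thresholds is set up — and noting that each $\mathcal F(\bar I_\xi^i)\cap\mathcal F(\bar I_\eta^j)$ is nontrivial above any level because the $\bar I_\xi$ refine $\bar I$ — the argument closes. I would simply cite \cite[Lemma 3.11]{vignati2018rigidity} after checking that $\mathcal X$, the colourings $L_0^n$, and the order $\leq$ on $\mathbb P$ match the objects there under the translation of notation already fixed in this section.
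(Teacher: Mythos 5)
Your fallback — cite \cite[Lemma 3.11]{vignati2018rigidity} after matching $\mathcal X$, $L_0^n$ and the order $\leq$ on $\mathbb P$ — is exactly what the paper does (it offers no independent proof), so as a citation your proposal lands in the right place. But your attempted reconstruction of the argument has a genuine gap at precisely the point you flag as the main obstacle, and the resolution you propose does not work. A witness $x$ for $L_0^n$ satisfies $(\sum_{k\leq m}q_k)x(\sum_{k\leq m}q_k)=x$, i.e.\ it is supported \emph{below} a level; your claim that such an $x$ is ``finitely supported, hence supported above the common level $N$ for all but boundedly many $\xi$'' is a non sequitur. After you pigeonhole the thresholds to a single $N$, nothing prevents every pair $\{\xi,\eta\}$ in the homogeneous family from having its witness supported inside the fixed bottom corner determined by $N$ — in fact the pairwise coherence given by Lemma~\ref{lemma:agreeing} pushes the witnesses into the bottom region, since above the pair's threshold the two liftings nearly agree. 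On that bottom corner the triangle inequality through the dominating lifting $\bar\alpha$ gives no control, and a counting argument does not save you: the restrictions of the liftings to contractions supported in a fixed corner form a subset of a \emph{nonseparable} function space (sup metric), so uncountably many liftings can perfectly well be pairwise more than $2^{-n}$ apart there. Ruling this out is the actual content of the cited lemma, and it requires more than Lemma~\ref{lemma:agreeing} plus a pigeonhole on one integer — in \cite{vignati2018rigidity} the hypothesis $\mathfrak b>\omega_1$ is used again to dominate suitably coded moduli of the liftings themselves (not only the partitions $\bar I_\xi$), which is the ingredient missing from your sketch.

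Two smaller points. First, your triangle inequality through $\alpha^j$ presupposes that the witness lies in $\mathcal F(\bar I^j)$ for some $j<4$; the relation $\bar I_\xi\leq\bar I$ only gives eventual containment of pairs of intervals, so an individual element of $\mathcal F(\bar I_\xi^i)\cap\mathcal F(\bar I_\eta^{j})$ need not lie in any single $\mathcal F(\bar I^{j'})$ — one has to decompose it and adjust the constants, which is routine but must be said. Second, since $\mathfrak b$ equals the least cardinality of a $\leq$-unbounded subset of $\mathbb P$, the hypothesis $\mathfrak b>\omega_1$ bounds the whole family $\{\bar I_\xi\mid\xi<\omega_1\}$ outright; no preliminary shrinking via $\sigma$-directedness is needed. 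As written, then, your argument does not constitute a proof; either supply the extra domination-of-moduli step or rely, as the paper does, on the verbatim transfer of \cite[Lemma 3.11]{vignati2018rigidity}.
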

The following encompasses the discussion after Proposition 3.12 in \cite{vignati2018rigidity}.
\begin{lemma}
Assume $\OCA$. We can find sets $D_k\subseteq\mathcal Y_k\subseteq\mathcal X$ such that
\begin{itemize}
 \item $D_k$ is a countable dense subset of $\mathcal Y_k$,
 \item Each $\mathcal Y_k$ is $L_k^1$-homogeneous and $\{\bar I\mid \exists \bar\alpha ((\bar\alpha,\bar I)\in\mathcal Y_k )$ is $\leq$-cofinal in $\mathbb P$.
\end{itemize}
\end{lemma}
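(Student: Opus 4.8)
The plan is to apply the Open Colouring Axiom $\OCA$ to each partition $[\mathcal X]^2=L_0^n\cup L_1^n$ and then glue the resulting pieces together across $n$. First I would fix $n$. Since each $L_0^n$ is open in $\mathcal X^2$ (as noted right after the definition of the partition), $\OCA$ applies: either $\mathcal X$ is a countable union of $L_1^n$-homogeneous sets, or there is an uncountable $L_0^n$-homogeneous set. By Lemma~\ref{lem:noL0hom}, and since $\OCA$ together with $\MA$ implies $\mathfrak b>\omega_1$, the second alternative is impossible. Hence for each $n$ we get a countable family $(\mathcal X^n_j)_{j\in\omega}$ of $L_1^n$-homogeneous sets covering $\mathcal X$.

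Next I would pass from this countable cover to a single cofinal homogeneous piece, using that $(\mathbb P,\leq)$ is $\sigma$-directed (Notation~\ref{notation3}) and that every $\bar I\in\mathbb P$ carries some $\bar\alpha$ with $(\bar\alpha,\bar I)\in\mathcal X$. For a fixed $n$: since $\mathbb P$ is $\sigma$-directed and $\{\bar I\mid\exists\bar\alpha\,((\bar\alpha,\bar I)\in\mathcal X^n_j)\}$ for $j\in\omega$ together cover $\mathbb P$ (every $\bar I$ lies under some $(\bar\alpha,\bar I)\in\mathcal X$, hence in some $\mathcal X^n_j$), a standard argument shows that for at least one $j=j(n)$ this projection is $\leq$-cofinal in $\mathbb P$ — otherwise each projection would be bounded, and a $\leq$-upper bound of the countably many bounds, which exists by $\sigma$-directedness, would escape the cover. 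Set $\mathcal Y_n=\mathcal X^n_{j(n)}$; it is $L_1^n$-homogeneous with $\leq$-cofinal $\mathbb P$-projection. Finally, since $\mathcal Y_n\subseteq\mathcal X$ and $\mathcal X$ is separable metrizable in the topology $\tau$, the subspace $\mathcal Y_n$ is itself separable metrizable, so it has a countable dense subset $D_n\subseteq\mathcal Y_n$. Relabelling $\mathcal Y_n,D_n$ as $\mathcal Y_k,D_k$ gives exactly the asserted conclusion.

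The main obstacle — and the only genuinely nontrivial input — is ruling out the second alternative of $\OCA$, which is precisely Lemma~\ref{lem:noL0hom} combined with the fact that $\OCA+\MA$ (in fact $\MA$ alone) yields $\mathfrak b>\omega_1$; I would simply cite these. The remaining work is bookkeeping: checking openness of $L_0^n$ (already observed in the text), and the $\sigma$-directed pigeonhole that extracts a cofinal piece from a countable cover. One subtle point to state carefully is that the homogeneity and cofinality are required \emph{simultaneously} for the same $\mathcal Y_k$, which is why the cofinal-projection extraction must be done inside the $\OCA$-given cover for each fixed $n$ rather than afterwards; but this is handled by the argument above.
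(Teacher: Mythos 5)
Your proof is correct and follows essentially the same route as the paper: apply $\OCA$ to each open partition $L_0^n\cup L_1^n$, rule out uncountable $L_0^n$-homogeneous sets via Lemma~\ref{lem:noL0hom} (note the paper uses that $\OCA$ alone implies $\mathfrak b>\omega_1$, so the appeal to $\MA$ is unnecessary and, strictly speaking, outside the lemma's hypotheses, though harmless in the global setting), and then extract a cofinal $L_1^n$-homogeneous piece with a countable dense subset using $\sigma$-directedness and separability. The $\sigma$-directed pigeonhole you spell out is exactly the ``standard argument'' the paper delegates to \cite[Lemmas 2.2.2 and 2.4.3]{Fa:AQ}.
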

\begin{proof}
We apply $\OCA$ to the open partition $L_0^n$, for $n\in\bbN$. First of all, $\OCA$ implies that $\mathfrak b>\omega_1$, and therefore Lemma~\ref{lem:noL0hom} implies that there are no uncountable $L_0^n$-homogeneous sets in $\mathcal X$. Fix $n$. By applying $\OCA$, we can then write $\mathcal X=\bigcup \mathcal X_m$ where each $\mathcal X_m$ is $L_1^n$-homogeneous. Since the order $\leq$ is $\sigma$-directed, a standard argument (e.g. \cite[Lemma 2.2.2 and 2.4.3]{Fa:AQ}) gives the thesis.
\end{proof}
We continue following \cite{vignati2018rigidity}, and diagonalise using elements of $\mathcal Y_k$ while preserving the property of being an almost lift for $\Phi_p$. What follows is \cite[Lemma 3.15]{vignati2018rigidity}.
\begin{lemma}\label{lem:almostthere}
Let $i<4$, $k\in\bbN$, and let $x\in\mathcal M(A)$ be a contraction. Suppose that there is a sequence $\langle (\bar\alpha_n,\bar I_l)\rangle\subseteq \mathcal Y_k$, and an increasing sequence of naturals $N_l>\max (I_l)_{4l+4}$, where $\bar I_l=(I_l)_n$, with the following properties:
\begin{enumerate}
\item\label{Borelc1} $e_{N_l}xe_{N_l}\in \mathcal F(\bar I_l)$ and 
\item\label{Borelc2} if $l<l'$ and $\max (I_l)_n\leq N_l$ then $(I_l)_n=(I_{l'})_n$.
\end{enumerate}
Let $y_n=q_{I^i_n}xq_{I^i_n}$.
Then 
\[
\pushQED{\qed} 
\norm{\pi_B(\sum (\alpha^i_n)_n(y_n))-\Phi_p(\pi_A(x))}\leq 4\cdot2^{-k}.
\qedhere
\popQED
\]
\end{lemma}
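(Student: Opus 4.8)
The statement is \cite[Lemma~3.15]{vignati2018rigidity} in our notation, and I would prove it by transporting that argument. The plan is to show that the \emph{diagonal} element $b:=\sum_n(\alpha^i_n)_n(y_n)$ is, modulo $B$, within $4\cdot2^{-k}$ of a genuine lift of $\Phi_p(\pi_A(x))$, by comparing it coordinate by coordinate against the \emph{coherent} local liftings $\alpha^i_l$ carried by the members $(\bar\alpha_l,\bar I_l)\in\mathcal Y_k$ and invoking $L_1^k$-homogeneity of $\mathcal Y_k$ to bound the discrepancy. First, by~\eqref{Borelc2} the interval $(I_l)_m$ is eventually constant in $l$; write $\bar I=(I_m)$ for the stabilised partition and $I^i_n=I_{4n+i}\cup I_{4n+i+1}$. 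The inequality $N_l>\max(I_l)_{4l+4}$ forces $(I_l^i)_n=I^i_n$ for all $l\ge n$, so each $y_n=q_{I^i_n}xq_{I^i_n}$ is a contraction lying in $\mathcal F(\bar I_n^i)_n$, the element $(\alpha^i_n)_n(y_n)$ lands in $r_{I^i_n}Br_{I^i_n}$, and — the ranges of the coordinate maps sitting inside blocks $\overline{(e^B_{k_n}-e^B_{j_n})B(e^B_{k_n}-e^B_{j_n})}$ with $j_n\to\infty$ — the series $b$ converges strictly in $r\mathcal M(B)r$. Letting $l\to\infty$ in~\eqref{Borelc1} shows moreover that $x$ is block-diagonal modulo $A$ with respect to $\bar I$, whence $\pi_A(x)=\pi_A(\sum_ny_n)$ (this is where the block-diagonal structure forced by~\eqref{Borelc1}, together with the way the four overlapping sparse sequences $\bar I^i$, $i<4$, and the compatibility of the $\alpha^j$ on overlaps built into $\mathcal X$ are set up, gets used). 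Thus it suffices to estimate $\norm{\pi_B(b)-\Phi_p(\pi_A(\sum_ny_n))}$.

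Using Lemma~\ref{lem:usingDBV} pick a Borel lift $\beta=\sum_n\beta_n$ of $\Phi_p$ on the sparse sequence $\bar I^i$; then $\psi:=\sum_n\beta_n(y_n)$ lifts $\Phi_p(\pi_A(\sum_ny_n))$, and by Lemma~\ref{lemma:commuting} its $n$-th $r_{I^i_n}$-corner is $\beta_n(y_n)$, so the quantity to bound is $\limsup_n\norm{(\alpha^i_n)_n(y_n)-\beta_n(y_n)}$. Fix a large $n$. Since the partitions occurring in $\mathcal Y_k$ are $\leq$-cofinal in $\mathbb P$, and by~\eqref{Borelc2} $\bar I_l$ agrees with $\bar I$ on an arbitrarily long initial window for $l$ large, one may choose $l$ (with $l\ge n$) so that this window reaches well past $n$; then $y_n$ is a contraction supported on a tail (as $\min I^i_n\to\infty$) lying in $\mathcal F(\bar I^i)\cap\mathcal F(\bar I_l^i)\cap\mathcal F(\bar I_n^i)$. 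Hence Lemma~\ref{lemma:agreeing} applied to $\beta$ and $\alpha^i_l$ gives $\norm{\beta_n(y_n)-(\alpha^i_l)_n(y_n)}<\varepsilon$ once $n$ is past a threshold depending on $l$ and $\varepsilon$, while $L_1^k$-homogeneity of $\mathcal Y_k$ applied to $(\bar\alpha_l,\bar I_l)$ and $(\bar\alpha_n,\bar I_n)$ gives $\norm{(\alpha^i_l)_n(y_n)-(\alpha^i_n)_n(y_n)}\le2^{-k}$. Arranging the sequence so that for every large $n$ such an $l$ with threshold $\le n$ is available, one gets $\limsup_n\norm{(\alpha^i_n)_n(y_n)-\beta_n(y_n)}\le 2^{-k}+\varepsilon$ for every $\varepsilon>0$; this gives $2^{-k}$, and a fortiori the stated bound $4\cdot2^{-k}$ (the extra room in \cite{vignati2018rigidity} cushions the seam contributions of coordinates near the edges of the windows, which I have suppressed above).

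The crux is the comparison of the second paragraph, and the difficulty is structural: $\norm{\pi_B(\cdot)}$ is a $\limsup$ as $n\to\infty$, while any single member of $\mathcal Y_k$ (or of the cofinal family refining $\bar I$) only controls a finite initial stretch of the partition, namely the window on which $\bar I_l$ has already stabilised to $\bar I$. One must therefore choose, for each large $n$, an index $l=l(n)$ whose stabilisation window reaches well past $n$ (so that $y_n$ genuinely sits on a single coordinate of $\bar I_l^i$ and of $\bar I_n^i$, making both invocations legitimate and the seam coordinates harmless), and interleave these windows so that the coordinates where the block structures of $\bar I$, $\bar I_l$ and $\bar I_n$ disagree, or where $N_l$ cuts a block, never accumulate inside the $\limsup$ — which is precisely what hypotheses~\eqref{Borelc1}, \eqref{Borelc2} and $N_l>\max(I_l)_{4l+4}$ are tailored to make possible. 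Once this bookkeeping is arranged as in \cite{vignati2018rigidity}, the remaining ingredients (strict convergence of $b$, the homogeneity comparison, Lemma~\ref{lemma:agreeing}, and orthogonality of the corners $r_{I^i_n}Br_{I^i_n}$) are routine.
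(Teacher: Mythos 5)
The paper itself gives no argument for this lemma: it is stated with a \qed{} and attributed verbatim to \cite[Lemma 3.15]{vignati2018rigidity}, so ``transporting that argument'' is indeed the intended route. The problem is that your transported sketch does not actually close at its crux. Your scheme routes the comparison through an auxiliary global lift $\beta=\sum_n\beta_n$ of $\Phi_p$ on the stabilised sparse sequence $\bar I^i$ (Lemma~\ref{lem:usingDBV}) and then tries to compare $\beta_n(y_n)$ with $(\alpha^i_n)_n(y_n)$ via an intermediate $(\bar\alpha_{l(n)},\bar I_{l(n)})\in\mathcal Y_k$, using Lemma~\ref{lemma:agreeing} for the pair $(\beta,\alpha^i_{l(n)})$ and $L_1^k$-homogeneity for the pair $(\bar\alpha_{l(n)},\bar\alpha_n)$. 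But the threshold supplied by Lemma~\ref{lemma:agreeing} depends on the pair of liftings, hence on $l(n)$, while $l(n)$ must be taken large (so that the frozen window of $\bar I_{l(n)}$ reaches past $I^i_n$). Nothing in hypotheses \eqref{Borelc1}--\eqref{Borelc2} or in $N_l>\max(I_l)_{4l+4}$ gives you control of that threshold in terms of $n$, so the interleaving you need is circular as set up; you acknowledge this and defer it (``arranged as in \cite{vignati2018rigidity}''), but this uniformisation is the entire content of the lemma, not routine bookkeeping. The argument one actually runs dispenses with $\beta$: each $\alpha^i_l$ is an \emph{exact} lift of $\Phi_p$ on its own domain, the corona norm $\norm{\pi_B(\cdot)}$ is computed as a $\limsup$ over finite windows that hypotheses \eqref{Borelc1}--\eqref{Borelc2} let you place inside the frozen parts of the partitions, and one argues by contradiction, extracting a single finitely supported contraction on which two members of $\mathcal Y_k$ disagree by more than $2^{-k}$, contradicting $L_1^k$-homogeneity; the constant $4\cdot 2^{-k}$ is exactly what survives the cut-down/seam approximations. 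Your claim to get $2^{-k}$ ``a fortiori'', with the seam terms ``suppressed'', is a symptom that these error terms were not handled rather than evidence of a sharper bound.

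A secondary point: your step ``$\pi_A(x)=\pi_A(\sum_n y_n)$'' requires reading hypothesis \eqref{Borelc1} as saying that $e_{N_l}xe_{N_l}$ is block-diagonal with respect to the \emph{sparse} sequence $\bar I_l^i$ (as in condition \ref{a1} of Lemma~\ref{lem:Borel}); under the literal reading, block-diagonality with respect to the full partition $\bar I_l$, the corners $y_n=q_{I^i_n}xq_{I^i_n}$ recover only the blocks indexed by $4n+i$ and $4n+i+1$, the identity fails, and the estimate you are aiming at cannot hold. You should state which reading you use and justify the identity (strict convergence of the tails plus \eqref{Borelc2}) rather than attributing it vaguely to ``the way the four overlapping sparse sequences are set up''.
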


We can now conclude our proof by showing that condition~\ref{wep03} from Definition~\ref{defin:ncwep} holds. 
\begin{lemma}\label{lem:Borel}
Let $(x,y)$ be a pair of contractions in $\mathcal M(A)\times p\mathcal M(B)p$. 
The following conditions are equivalent:
\begin{enumerate}
\item\label{Borelc11} $(x,y)\in\Gamma_{\Phi_p}$.
\item\label{Borelc12} For every $k\in\bbN$ there are contractions $x_i\in\mathcal M(A)$ and $y_i\in\mathcal M(B)$, for $i<4$ such that $x=_A\sum_{i<4}x_i$, $y=_B\sum_{i<4}y_i$, and there are sequences $\langle(\bar\alpha_l,\bar I_l)\rangle\subseteq D_k$ and $(N^i_l)\subseteq\bbN$ with $N^{i}_l\geq\max (I_l)_{4l+4}$ and satisfying 
\begin{enumerate}[label=(\alph*)]
\item\label{a1} $e_{N^{i}_l}x_{i}e_{N^{i}_l}\in \prod_n\mathcal F(\bar I_l^i)_n$ 
\item\label{a2} if $l<l'$ and $\max (I_l)_n\leq \max_{i,j}N^i_l$ then $(I_l)_n=(I_{l'})_n$, and
\item\label{a3} 
\[
\norm{\sum (\alpha^i_{l})_l(q_{I_l^i}x_iq_{I_l^i})-y_{i}}<20\cdot 2^{-k}.
\]
\end{enumerate}
\item\label{Borelc13} For all contractions $x_{i}\in\mathcal M(A)$ and $y_{i}\in\mathcal M(B)$, for $i<4$, if $x=_A\sum_{i<4}x_i$ and for every $k\in\bbN$ there are sequences $\langle(\bar\alpha_l,\bar I_l)\rangle\subseteq D_k$ and $(N^{i}_l)$ with $N_l^i\geq \max(I_l)_{4l+4}$ satisfying \ref{a1}, \ref{a2} and \ref{a3}, then $y=_B\sum_{i<4}y_i$.
\end{enumerate}
Consequently, $\Gamma_{\Phi_p}$ is Borel.
\end{lemma}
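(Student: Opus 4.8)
The plan is to prove the implications \eqref{Borelc11}$\Rightarrow$\eqref{Borelc12}$\Rightarrow$\eqref{Borelc13}$\Rightarrow$\eqref{Borelc11} and then to deduce Borelness of $\Gamma_{\Phi_p}$ from the syntactic shape of \eqref{Borelc12} and \eqref{Borelc13}. For the latter: \eqref{Borelc12} presents $\Gamma_{\Phi_p}$ as a countable intersection over $k\in\bbN$ of sets obtained by projecting, along the Polish parameter spaces $(\mathcal M(A)_{\leq1})^4\times(\mathcal M(B)_{\leq1})^4\times D_k^{\bbN}\times(\bbN^\bbN)^4$, the collection of tuples satisfying \ref{a1}, \ref{a2}, \ref{a3} together with $x=_A\sum_{i<4}x_i$ and $y=_B\sum_{i<4}y_i$. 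Each of these conditions is Borel in all the variables: $A\subseteq\mathcal M(A)$ and $B\subseteq\mathcal M(B)$ are Borel (in the strict topologies), so $=_A$ and $=_B$ are Borel; the sets $\prod_n\mathcal F(\bar I_l^i)_n$ are Borel; the cut-downs $a\mapsto q_{I_l^i}aq_{I_l^i}$ and $a\mapsto e_{N}ae_{N}$ are strictly continuous on bounded sets, and the sums $\sum_l(\alpha^i_l)_l(\,\cdot\,)$ (which converge strictly, the summands lying in pairwise orthogonal hereditary subalgebras $r_{I_l^i}Br_{I_l^i}$) define Borel maps, using Lemma~\ref{lem:usingDBV} for the Borelness of the $\alpha^i_l$; and the norm on $\mathcal M(B)$ is Borel. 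Hence \eqref{Borelc12} defines an analytic set. Dually, \eqref{Borelc13} has the form ``for all $(x_i)_{i<4},(y_i)_{i<4}$, if $x=_A\sum_ix_i$ and [the analytic condition that witnesses exist for every $k$] then $y=_B\sum_iy_i$'', a universal quantifier over a Polish space applied to an implication with analytic hypothesis and Borel conclusion; hence \eqref{Borelc13} defines a co-analytic set. Once the three conditions are shown equivalent, $\Gamma_{\Phi_p}$ is simultaneously analytic and co-analytic, hence Borel by Souslin's theorem, which is the final assertion.

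For \eqref{Borelc11}$\Rightarrow$\eqref{Borelc12}: assuming $\Phi_p(\pi_A(x))=\pi_B(y)$, fix $k$, and use Lemma~\ref{lem:stratification} to write $x=_A\sum_{i<4}x_i$ with each $x_i\in\mathcal F(\bar I^i)$ a contraction. Using that $\{\bar J:\exists\bar\beta\,(\bar\beta,\bar J)\in\mathcal Y_k\}$ is $\leq$-cofinal in $\mathbb P$ and that $D_k$ is dense in $\mathcal Y_k$, build inductively a sequence $\langle(\bar\alpha_l,\bar I_l)\rangle\subseteq D_k$ and naturals $N^i_l\geq\max(I_l)_{4l+4}$ so that \ref{a1} and \ref{a2} hold --- this is exactly the hypothesis of Lemma~\ref{lem:almostthere}. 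That lemma then gives, for each $i<4$, that $\sum_l(\alpha^i_l)_l(q_{I_l^i}x_iq_{I_l^i})$ is within a fixed multiple of $2^{-k}$ of $\Phi_p(\pi_A(x_i))$ in quotient norm. We then choose a contraction $y_i$ with $\pi_B(y_i)=\Phi_p(\pi_A(x_i))$, correct it by an element of $B$ and, if needed, truncate by functional calculus to restore contractivity (which does not disturb $\pi_B(y_i)$ since $\norm{\Phi_p(\pi_A(x_i))}\leq1$), to arrange \ref{a3}. Finally $\pi_B(\sum_{i<4}y_i)=\sum_{i<4}\Phi_p(\pi_A(x_i))=\Phi_p(\pi_A(x))=\pi_B(y)$, so $y=_B\sum_{i<4}y_i$, as required.

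The implication \eqref{Borelc12}$\Rightarrow$\eqref{Borelc13} is soft: for \emph{any} decomposition $x=_A\sum_ix_i'$, $y_i'$ admitting witnesses for every $k$, condition \ref{a3} together with Lemma~\ref{lem:almostthere} forces $\pi_B(y_i')=\Phi_p(\pi_A(x_i'))$ (let $k\to\infty$), hence $\pi_B(\sum_iy_i')=\Phi_p(\pi_A(x))$; applying this also to the decomposition supplied by \eqref{Borelc12} (for which $y=_B\sum_iy_i$) yields $\pi_B(y)=\Phi_p(\pi_A(x))$, whence $y=_B\sum_iy_i'$. For \eqref{Borelc13}$\Rightarrow$\eqref{Borelc11} one manufactures a single decomposition satisfying the hypothesis of \eqref{Borelc13}: stratify $x=_A\sum_ix_i$ with $x_i\in\mathcal F(\bar I^i)$, fix a Borel lift $\alpha_{\bar I^i}=\sum_n\alpha_{\bar I^i,n}$ of $\Phi_p$ on $\mathcal F(\bar I^i)$ as in Lemma~\ref{lem:usingDBV}, and set $y_i:=\sum_n\alpha_{\bar I^i,n}(q_{I_n^i}x_iq_{I_n^i})$, so that $\pi_B(y_i)=\Phi_p(\pi_A(x_i))$; for each $k$, cofinality and density together with the agreement Lemma~\ref{lemma:agreeing} furnish a $D_k$-sequence satisfying \ref{a1}, \ref{a2} whose stitched lift agrees with $\alpha_{\bar I^i}$ on $x_i$ up to $20\cdot2^{-k}$ in operator norm, which is \ref{a3}. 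Then \eqref{Borelc13} gives $y=_B\sum_iy_i$, and since $\pi_B(\sum_iy_i)=\Phi_p(\pi_A(x))$ we conclude $\pi_B(y)=\Phi_p(\pi_A(x))$, i.e.\ \eqref{Borelc11}.

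The main obstacle is the diagonalisation underlying \eqref{Borelc11}$\Rightarrow$\eqref{Borelc12} and --- the harder variant --- the \emph{uniform in $k$} diagonalisation underlying \eqref{Borelc13}$\Rightarrow$\eqref{Borelc11}: one runs the construction simultaneously over the four ``colours'' $i<4$, keeps the partitions $\bar I_l$ refining the stratification partition while controlling the cutoffs $N^i_l$, and --- the delicate point --- converts the quotient-norm estimates output by Lemma~\ref{lem:almostthere} into the operator-norm estimate \ref{a3} by correcting lifts with elements of $B$ and invoking Lemma~\ref{lemma:agreeing}. This is precisely the bookkeeping carried out at the end of \S3 of \cite{vignati2018rigidity} (in the proof of the analogue of Lemma~\ref{lem:almostthere} and of the Borelness statement there), which we follow closely after translating notation; everything substantive has already been isolated in Lemmas~\ref{lem:usingDBV}--\ref{lem:almostthere}.
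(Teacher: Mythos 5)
Your proposal follows essentially the same route as the paper: the paper's proof of Lemma~\ref{lem:Borel} simply cites \cite[Theorem 3.16]{vignati2018rigidity} for the equivalence of \eqref{Borelc11}--\eqref{Borelc13} and then observes that \eqref{Borelc12} is an analytic and \eqref{Borelc13} a co-analytic definition of $\Gamma_{\Phi_p}$, so Borelness follows by Souslin's theorem, which is exactly your final step. Your sketches of the three implications (stratification via Lemma~\ref{lem:stratification}, local Borel lifts via Lemma~\ref{lem:usingDBV}, stabilisation via Lemmas~\ref{lemma:agreeing} and~\ref{lem:almostthere}) are the intended outline, and like the paper you ultimately defer the delicate bookkeeping (in particular the conversion of quotient-norm estimates into the fixed-decomposition operator-norm condition~\ref{a3} needed for \eqref{Borelc13}$\Rightarrow$\eqref{Borelc11}) to the end of \S3 of \cite{vignati2018rigidity}.
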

\begin{proof}
The equivalence of conditions~\eqref{Borelc11}--~\eqref{Borelc13} was proved in \cite[Theorem 3.16]{vignati2018rigidity}. The last statement follows from that \eqref{Borelc12} gives an analytic definition of $\Gamma_\Phi$, while \eqref{Borelc13} provides a co-analytic one.
\end{proof}

\begin{proof}[Proof of Theorem~\ref{thm:ncwep}]
Fix a $^*$-homomorphism between coronas of separable nonunital \cstar-algebras. 
Let $p$ be given in equation \eqref{theP}, where the elements $(r_n)_n$ are given in Lemma~\ref{lem:liftrig}. 

By Lemma~\ref{lem:pworks1} is a projection. By Proposition~\ref{prop:largeker}, $p$ commutes with the range of $\Phi$ and the kernel of $\Phi_{1-p}$ is a nonmeagre ideal, thus conditions~\ref{wep01} and \ref{wep02} hold. Condition \ref{wep03} is implied by Lemma~\ref{lem:Borel}.
\end{proof}

\section{Nonmeagre ideals in coronas}\label{S.LargeIdeals}
We study nonmeagre ideals in multiplier algebras, their properties, and whether these can exist in particular cases. We repeat Definition~\ref{def:noncommnonmeagre} for the reader's convenience.

\begin{definition}\label{def:noncommnonmeagre2}Let $A$ be a nonunital separable \cstar-algebra. An ideal $\SI\subseteq\mathcal M(A)$ containing $A$ is called \emph{nonmeagre} if for every good approximate identity $(e_n)\subseteq A$ and every partition of $\mathbb N$ into consecutive finite intervals $\bar I = (I_n)$ there is an infinite $L\subseteq\mathbb N$ such that 
\[
\sum_{n\in L}(e_{\max I_n}-e_{\min I_n-1})\in\SI.
\]
We abuse notation and say that an ideal $\SJ\subseteq\mathcal Q(A)$ is nonmeagre if its lifting $\{a\in\mathcal M(A)\mid \pi_A(a)\in \SJ\}$ is a nonmeagre ideal in $\mathcal M(A)$.
\end{definition}

The proof of this lemma was promised in \S\ref{s.Preliminaries} (see Lemma~\ref{lem:approxid1}).

\begin{lemma}\label{lem:approxid}
Let $A$ be a nonunital separable \cstar-algebra, and suppose that $\SI\subseteq\mathcal M(A)$ is an ideal containing $A$. Assume that there is a good approximate identity $(e_n)\subseteq A$ such that for every partition of $\mathbb N$ into consecutive finite intervals $\bar I = (I_n)$ there is an infinite $L\subseteq\mathbb N$ such that 
\[
\sum_{n\in L}(e_{\max I_n}-e_{\min I_n-1})\in\SI.
\] 
Then $\SI$ is nonmeagre.
\end{lemma}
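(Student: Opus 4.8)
The plan is to establish the combinatorial condition of Definition~\ref{def:noncommnonmeagre2} for \emph{every} good approximate identity of $A$ and \emph{every} partition of $\N$ into consecutive finite intervals, transferring it from the single good approximate identity $(E_n)$ supplied by the hypothesis. The first step is to peel off the partition: it is enough to prove that for every good approximate identity $(f_n)$ of $A$ there is an infinite $L\subseteq\N$ with $\sum_{n\in L}(f_n-f_{n-1})\in\SI$ (with the standard convention $f_{-1}=0$). Indeed, given an arbitrary good approximate identity $(h_n)$ and a partition $\bar J=(J_n)$, the subsequence $g_n:=h_{\max J_n}$ is again a good approximate identity (this is routine: \ref{approxid1} passes to subsequences because $(h_n)$ is increasing, and \ref{approxid2} passes to subsequences by attaching to a finite index-interval $I$ of the subsequence the element of \ref{approxid2} that $(h_n)$ attaches to the interval $I$ spans in the original indexing), and since $\bar J$ partitions $\N$ we have $g_n-g_{n-1}=h_{\max J_n}-h_{\min J_n-1}$; so the $L$ produced for $(g_n)$ is exactly the witness required for $(h_n)$ and $\bar J$. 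From now on fix a good approximate identity $(f_n)$ of $A$.

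Next I would interlace $(E_m)$ with $(f_n)$. As both are approximate identities of the separable algebra $A$, one chooses strictly increasing $(n_j)_{j\ge0}$ and $(k_j)_{j\ge0}$ in $\N$ with $\norm{E_{k_j}f_{n_j}-f_{n_j}}<2^{-j}$ and $\norm{f_{n_{j+1}}E_{k_j}-E_{k_j}}<2^{-j}$ for all $j$ — each inequality is arranged by taking the next index large enough, and passing to adjoints gives the symmetric estimates $\norm{f_{n_j}E_{k_j}-f_{n_j}}<2^{-j}$, $\norm{E_{k_j}f_{n_{j+1}}-E_{k_j}}<2^{-j}$. For $j\ge1$ put $d_j:=f_{n_{j+1}}-f_{n_j}\in A$ and $\sigma_j:=E_{k_{j+1}}-E_{k_{j-1}}\in\mathcal M(A)$, both positive contractions. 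Throughout I use the \emph{exact} consequences of~\ref{approxid1}: $E_aE_b=E_bE_a=E_a$ and $f_af_b=f_bf_a=f_a$ whenever $a<b$ (so in particular $\{E_m\}$ is a commuting family). Expanding $\sigma_jd_j$ into four products, replacing each factor by its approximant from the estimates above, and using $E_{k_{j+1}}E_{k_j}=E_{k_j}$ and $f_{n_{j+1}}f_{n_j}=f_{n_j}$ to absorb the single ``jump'', one obtains $\norm{\sigma_jd_j-d_j}=O(2^{-j})$; the same computations also give $\norm{E_{k_{j-1}}d_j}=O(2^{-j})$ and $\norm{d_j-E_{k_{j+1}}d_j}=O(2^{-j})$.

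Now I would feed the hypothesis the partition of $\N$ into consecutive intervals with right endpoints $k_1<k_3<k_5<\cdots$; its $n$-th block is $E_{k_{2n+1}}-E_{k_{2n-1}}=\sigma_{2n}$ (any leftover first term $E_{k_1}\in A\subseteq\SI$ can be discarded). This produces an infinite $L\subseteq 2\N$ — an arbitrary infinite set of even numbers, so any two distinct elements of $L$ differ by at least $2$ — with $\sigma:=\sum_{j\in L}\sigma_j\in\SI$. Fix $j\in L$ and split $\sigma=\sigma_j+\sigma^{<j}+\sigma^{>j}$, where $\sigma^{<j}=\sum_{j'\in L,\,j'<j}\sigma_{j'}$ (a finite sum) and $\sigma^{>j}=\sum_{j'\in L,\,j'>j}\sigma_{j'}$. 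Since $j$ is even, $j'<j$ in $L$ forces $j'\le j-2$, so $\sigma^{<j}$ is a sum of atoms $E_m-E_{m-1}$ with $m\le k_{j-1}$ and hence $0\le\sigma^{<j}\le E_{k_{j-1}}$; symmetrically $j'>j$ in $L$ forces $j'\ge j+2$, so $\sigma^{>j}$ is a strictly convergent sum of atoms $E_m-E_{m-1}$ with $m>k_{j+1}$ and hence $0\le\sigma^{>j}\le 1-E_{k_{j+1}}$. All these operators commute, and for commuting positives $0\le S\le T$ one has $\norm{Sc}\le\norm{Tc}$ for every $c$ (since $S^2\le T^2$, whence $\norm{Sc}^2=\norm{c^*S^2c}\le\norm{c^*T^2c}=\norm{Tc}^2$); taking $c=d_j$,
\[
\norm{\sigma d_j-d_j}\le\norm{\sigma_jd_j-d_j}+\norm{\sigma^{<j}d_j}+\norm{\sigma^{>j}d_j}\le\norm{\sigma_jd_j-d_j}+\norm{E_{k_{j-1}}d_j}+\norm{d_j-E_{k_{j+1}}d_j}=O(2^{-j}).
\]
Therefore $\sum_{j\in L}(\sigma d_j-d_j)$ converges absolutely in norm; each summand lies in $A$, so the sum lies in $A$, and, comparing with the strictly convergent series $x:=\sum_{j\in L}d_j\in\mathcal M(A)$ (its partial sums form an increasing sequence of positive contractions), this sum equals $\sigma x-x$. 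Hence $\sigma x-x\in A$ and $\sigma x\in\SI$ (as $\SI$ is an ideal and $\sigma\in\SI$), so $x\in\SI$ because $A\subseteq\SI$ — note that this last step does not require $\SI$ to be norm-closed. Finally $x=\sum_{n\in L'}(f_n-f_{n-1})$ for the infinite set $L'=\bigcup_{j\in L}(n_j,n_{j+1}]$, which is the witness demanded by the reduced statement; this proves the lemma.

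The main obstacle is the noncommutativity. Two approximate identities of $A$ need not be comparable, so $(E_m)$ and $(f_n)$ can be intertwined only up to errors, and the argument hinges on two points: (i) keeping all errors geometric, so that the ``summability trick'' above turns an approximate containment into genuine membership in $\SI$ without any closedness assumption on $\SI$; and (ii) exploiting the exact relations $e_ae_b=e_a$ ($a<b$) of a good approximate identity, together with the sandwiches $\sigma^{<j}\le E_{k_{j-1}}$ and $\sigma^{>j}\le 1-E_{k_{j+1}}$, to control the cross-terms $\sigma^{<j}d_j$, $\sigma^{>j}d_j$ — which is exactly why the hypothesis must be applied to a partition whose blocks are spread apart (the breakpoints $k_1,k_3,k_5,\dots$). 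The remaining points — that a subsequence of a good approximate identity is again one, and that the interval-endpoint and $n=0$ bookkeeping only ever perturbs by elements of $A\subseteq\SI$ — are elementary and I would not dwell on them.
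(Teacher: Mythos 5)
Your proof is correct and follows essentially the same strategy as the paper's: intertwine the given good approximate identity $(E_n)$ with the second one with geometrically decaying errors, feed the hypothesis the partition read off from the intertwining indices, and transfer membership in $\SI$ using the ideal property together with the fact that the summable error terms add up to an element of $A\subseteq\SI$. The only differences are organizational — you first reduce to the singleton partition via the observation that a subsequence of a good approximate identity is again one, and you absorb via one-sided multiplication $\sigma x=_A x$ with commuting dominations instead of the paper's two-sided compressions $g_{[n_k,n_{k+1}]}h_{I_{m_k}}g_{[n_k,n_{k+1}]}$ — and these do not change the substance of the argument.
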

\begin{proof}
One can see the hypotheses as `being nonmeagre w.r.t. to the approximate identity $(e_n)$', and we want to show this condition does not depend on the choice of $(e_n)$. 
We let $(f_n)\subseteq A$ be a second good approximate identity for $A$, and set, for $n\in \N$, $g_n=e_n-e_{n-1}$ and $h_n=f_n-f_{n-1}$. If $I\subseteq\N$ is a finite interval let $g_I=e_{\max I}-e_{\min I-1}$ and $h_I=f_{\max I}-f_{\min I-1}$. We also fix a sequence of finite disjoint nonempty intervals $\bar I=(I_n)$. We aim to prove that there is an infinite $L\subseteq\N$ such that $
\sum_{n\in L} h_{I_n}\in \SI$.

We construct two strictly increasing sequences of natural numbers $(m_k)$ and $(n_k)$ such that for all $k$ we have that
\[
\norm{g_{[n_k,n_{k+1}]}h_{I_{m_k}}g_{[n_k,n_{k+1})}-h_{I_{m_k}}}<2^{-k}.
\]
Let $m_0=n_0=0$, and suppose that both $n_k$ and $m_{k-1}$ have been constructed. Let $j_k$ be large enough so that $\norm{f_{j_k}e_{n_k+1}-e_{n_k+1}}<2^{-k-1}$, and let $m_{k}$ be such that $j_k+1<\min I_{m_k}$. Since $h_{I_{m_k}}$ and $f_{j_k}$ are orthogonal, then $\norm{h_{I_{m_{k}}}e_{n_k+1}}<2^{-k-1}$. Let $J$ be an interval such that $\norm{g_Jh_{I_{m_{k}}}-h_{I_{m_{k}}}}<2^{-k-1}$. By the above discussion, we can assume that $\min J> n_k$, and we set $n_{k+1}=\max J+1$. This concludes the construction.
Note that for every infinite $K\subseteq\N$
\[
\sum_{k\in K} g_{[n_k,n_{k+1}]}h_{I_{m_k}}g_{[n_k,n_{k+1}]} =_A \sum_{k\in K}h_{I_{m_k}}.
\]
Let now $J_k=[n_k,n_{k+1})$. Since $\SI$ is nonmeagre (w.r.t. $(e_n)$), we can find an infinite $L$ be such that $\sum_{k\in L}g_{J_k}\in \SI$, and so does $\sum_{k\in L} g_{[n_k,n_{k+1}]}h_{I_{m_k}}g_{[n_k,n_{k+1}]}$. This concludes the proof.
\end{proof}

The following is the noncommutative analogue of the fact that nonmeagre ideals in $\mathcal P(\N)$ containing all finite sets are dense (tall), where an ideal $\SI$ on $\N$ is dense if every infinite subset of $N$ contains an infinite set in $\SI$.

Recall that an ideal $\SI$ in a \cstar-algebra $A$ is essential if its annihilator is trivial, or, equivalently, if $\SI\cap \SJ\neq 
\{0\}$ for every ideal $\SJ\subseteq A$ (see \cite[II.5.4.7]{Black:Operator}).

\begin{prp}\label{prop:nonmeagreid}
All nonmeagre ideals in coronas of separable nonunital \cstar-algebras are essential. 
\end{prp}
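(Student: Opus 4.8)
The plan is to argue by contradiction: suppose $\SJ\subseteq\mathcal Q(A)$ is a nonmeagre ideal which is \emph{not} essential, and produce a good approximate identity together with a partition of $\bbN$ that violates the defining clause of nonmeagreness for the lifting $\SI=\{a\in\mathcal M(A):\pi_A(a)\in\SJ\}$. Since $\SJ$ is not essential, its annihilator in $\mathcal Q(A)$ is a nonzero ideal; fix a nonzero positive element $b$ of it, so that $b\SJ=\{0\}$, together with a positive lift $\beta\in\mathcal M(A)$ of $b$. As $b\neq 0$ we have $\beta\notin A$, hence $\eta_0:=\mathrm{dist}(\beta,A)>0$; set $\delta=\eta_0/2$. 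Fix once and for all a good approximate identity $(e_n)$ for $A$, with the convention $e_{-1}=0$. Recall that condition \ref{approxid1} forces $e_ae_b=e_{\min(a,b)}$, so the $e_n$ commute and increase, and for every finite interval $J$ the element $q_J:=e_{\max J}-e_{\min J-1}$ is a positive contraction commuting with each $e_m$ and satisfying $q_Jq_{J'}=0$ whenever $\max J<\min J'$.

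First I would build, recursively, consecutive nonempty finite intervals $I_0<I_1<\cdots$ covering $\bbN$ with $\|\beta q_{I_n}\|>\delta$ for every $n$. Given $I_{n-1}$ with $\max I_{n-1}=m$ (taking $m=-1$ when $n=0$), the net $\beta(e_k-e_m)$ converges strictly to $\beta(1-e_m)$ as $k\to\infty$, so by strict lower semicontinuity of the norm $\limsup_k\|\beta(e_k-e_m)\|\ge\|\beta(1-e_m)\|=\|\beta-\beta e_m\|\ge\eta_0>\delta$, using $\beta e_m\in A$ for the middle equality. Pick $k>m$ with $\|\beta(e_k-e_m)\|>\delta$ and set $I_n=[m+1,k]$, so that $q_{I_n}=e_k-e_m$.

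Next I would show that \emph{no} infinite $L\subseteq\bbN$ works for this partition. Fix such an $L$ and write $q_{I_L}=\sum_{n\in L}q_{I_n}$, a strictly convergent sum of pairwise orthogonal positive contractions. For any $m$, choose $n\in L$ with $\min I_n-1\ge m$; then $q_{I_n}(1-e_m)=q_{I_n}$, the family $\{q_{I_{n'}}(1-e_m):n'\in L\}$ is still pairwise orthogonal and sums strictly to $q_{I_L}(1-e_m)$, and since $q_{I_L}$ commutes with $e_m$ the off-diagonal terms vanish exactly, giving
\[
\bigl(\beta q_{I_L}(1-e_m)\bigr)\bigl(\beta q_{I_L}(1-e_m)\bigr)^*=\sum_{n'\in L}\bigl(\beta q_{I_{n'}}(1-e_m)\bigr)\bigl(\beta q_{I_{n'}}(1-e_m)\bigr)^*\ge\bigl(\beta q_{I_n}\bigr)\bigl(\beta q_{I_n}\bigr)^*,
\]
whence $\|\beta q_{I_L}(1-e_m)\|\ge\|\beta q_{I_n}\|>\delta$. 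Since such an $n$ exists for every $m$, we get $\|\beta q_{I_L}-\beta q_{I_L}e_m\|>\delta$ for all $m$, so $\beta q_{I_L}\notin A$ (an element of $A$ is norm-approximated by its cut-downs $ye_m$), i.e.\ $b\,\pi_A(q_{I_L})=\pi_A(\beta q_{I_L})\neq 0$. As $b$ annihilates $\SJ$, this forces $\pi_A(q_{I_L})\notin\SJ$, i.e.\ $\sum_{n\in L}(e_{\max I_n}-e_{\min I_n-1})\notin\SI$. Holding for every infinite $L$, this contradicts nonmeagreness of $\SI$ applied to the good approximate identity $(e_n)$ and the partition $(I_n)$, completing the proof.

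The only genuinely non-routine step is the third one. In $\mathcal P(\bbN)$ the analogous blocks are disjointly supported and the estimate is immediate; here $\beta q_{I_L}$ is \emph{not} literally a sum of elements with pairwise orthogonal ranges, and the point is to combine the $C^*$-identity with the exact orthogonality $q_{I_n}q_{I_m}=0$ in order to extract a lower bound on $\mathrm{dist}(\beta q_{I_L},A)$ from the contribution of a single block, \emph{uniformly} in the cut-off $m$. The remaining ingredients — liftability of positive elements, $\|x-xe_m\|\ge\mathrm{dist}(x,A)$ together with $\|y-ye_m\|\to 0$ for $y\in A$, strict lower semicontinuity of the norm, and elementary manipulations with the relations \ref{approxid1} — are all standard.
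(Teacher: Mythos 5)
Your overall plan is sound and genuinely different from the paper's proof (which argues directly: given a nonzero ideal $\SJ\subseteq\mathcal Q(A)$, it uses the stratification Lemma~\ref{lem:stratification} to produce a positive $b=\sum b_n$ supported on a \emph{sparse} sequence, with $\pi_A(b)$ below a given element of $\SJ$ and $\norm{b_n}$ bounded below, and then cuts $b$ down by the infinite set $L$ supplied by nonmeagreness), but the justification of your key step is wrong as written. You claim that \ref{approxid1} forces $e_ae_b=e_{\min(a,b)}$ and hence $q_Jq_{J'}=0$ whenever $\max J<\min J'$, so that the off-diagonal terms in your displayed identity ``vanish exactly''. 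The relation $e_ae_b=e_{\min(a,b)}$ holds only for $a\neq b$: condition \ref{approxid1} gives $e_{n+1}e_n=e_n$, not $e_n^2=e_n$, and the $e_n$ are not projections in general (take $A=C_0([0,\infty))$, where there are no nontrivial projections at all). For adjacent intervals one computes $q_{I_n}q_{I_{n+1}}=e_{\max I_n}-e_{\max I_n}^2\neq 0$ in general. Since the infinite set $L$ produced by nonmeagreness may contain adjacent integers, you cannot assume the blocks $q_{I_n}$, $n\in L$, are pairwise orthogonal; this is precisely why the paper works with sparse sequences ($\max I_n+1<\min I_{n+1}$) and why \ref{approxid2} only asserts $h_Ih_J=0$ when $\max I+1<\min J$. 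The same slip appears when you choose $n\in L$ with $\min I_n-1\ge m$ and assert $q_{I_n}(1-e_m)=q_{I_n}$: if $\min I_n-1=m$ this fails, since $q_{I_n}e_m=e_m-e_m^2$; you need the strict inequality $\min I_n-1>m$.

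The step can, however, be repaired without changing your strategy. All elements in sight lie in the commutative \cstar-subalgebra generated by $\{e_j\}$, so with $x=\beta q_{I_L}(1-e_m)$ you get $xx^*=\beta\,q_{I_L}^2(1-e_m)^2\,\beta$; choosing $n\in L$ with $\min I_n-1>m$ (so $e_mq_{I_n}=0$) and using the Gelfand transform of that subalgebra (at points where $q_{I_n}>0$ one has $e_m=0$, and $q_{I_L}\ge q_{I_n}$ everywhere) one obtains $q_{I_L}^2(1-e_m)^2\ge q_{I_n}^2$, hence $\norm{x}\ge\norm{\beta q_{I_n}}>\delta$ as you wanted. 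Equivalently: the off-diagonal terms do not vanish, but each of them is of the form $\beta\,q_{I_{n'}}q_{I_{n''}}(1-e_m)^2\,\beta$ with a positive (commuting) middle factor, so they only increase $xx^*$. Alternatively you could run your construction with the ``interior'' blocks $e_{\max I_n-1}-e_{\min I_n}$, which \emph{are} exactly orthogonal to every $q_{I_{n'}}$ with $n'\neq n$. With one of these corrections your contrapositive argument --- a nonzero annihilator element forces a partition for which no infinite $L$ works --- becomes a valid alternative to the paper's proof, trading the stratification lemma for a hand-built partition and a uniform lower bound on $\mathrm{dist}(\beta q_{I_L},A)$.
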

\begin{proof}
Let $\SI \subseteq \mathcal{Q}(A)$ be a nonmeagre ideal, and let $\SJ$ be a nonzero ideal in $\mathcal Q(A)$. We want to find a nonzero $a\in \SI\cap \SJ$. Fix a nonzero positive $a\in \SJ$. By Lemma~\ref{lem:stratification} we can find a sparse sequence $\bar I=(I_n)$ and a nonzero positive $b=\sum b_n\in \mathcal F(\bar I)$ such that $\pi_A(b)\leq a$, so that $\pi_A(b)\in \SJ$. We can assume that $1 > \norm{b_n}>\varepsilon$ for some fixed $\varepsilon>0$. Let $J_n=[\min I_n-1, \max I_n+1]$. Since $\SI$ is nonmeagre, we can find an infinite $L$ such that $\pi_A(\sum_{n\in L}(e_{\max J_n}-e_{\min J_n-1}))\in \SI$. Letting $b_L=\sum_{n\in L}b_n$ we have that $\norm{b_L}\geq\varepsilon$. Since $\pi_A(b_L)\leq \pi_A(b)$, $\pi_A(b_L)\in \SJ$, and since $\pi_A(b_L)\leq\pi_A(\sum_{n\in L}(e_{\max J_n}-e_{\min J_n-1}))$, then $\pi_A(b_L)\in \SI$. This concludes the proof.
\end{proof}
\begin{remark}\label{rem:differencennmess}
Even in coronas of abelian \cstar-algebras the two concepts do not coincide. In fact, there are many essential ideals which are not nonmeagre. For example, let $X=[0,\infty)$ and $A=C_0(X)$. The ideal $\SI=\{\pi_A(f)\mid \lim_n f(n)=0\}$ is essential yet meagre.
\end{remark}

It is natural to ask whether (and when) such ideals can exist. Easy examples arise from reduced products.
\begin{lemma}
Let $A_n$ be a sequence of unital \cstar-algebras, and let $A=\bigoplus A_n$, so that $\mathcal M(A)=\prod A_n$. If $\SJ\subseteq\mathcal P(\N)/\Fin$ be a nonmeagre ideal containing all finite sets. Then $\SI=\{a\in\prod A_n\mid \supp(a)\in \SJ\}$ is a nonmeagre ideal in $\mathcal M(A)$. \qed
\end{lemma}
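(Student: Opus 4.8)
The plan is to reduce the statement about the multiplier algebra $\mathcal M(A)=\prod A_n$ to the classical combinatorial fact about nonmeagre ideals on $\mathbb N$, using Lemma~\ref{lem:approxid} to localize the verification of nonmeagreness to a single conveniently chosen good approximate identity. First I would check the ideal-theoretic claims: $\SI=\{a\in\prod A_n\mid\supp(a)\in\SJ\}$ is a two-sided, norm-closed, $^*$-closed subspace of $\prod A_n$, since $\supp(ab)\subseteq\supp(a)$, $\supp(a^*)=\supp(a)$, and support is (upper semi)continuous under norm limits in the sense that $\supp(\lim a^{(k)})\subseteq\bigcup_k\supp(a^{(k)})$ modulo the fact that $\SJ$ is an ideal containing finite sets — actually the cleanest route is: if $\|a-a^{(k)}\|\to 0$ then the coordinate $a(n)$ is nonzero only if, for large $k$, $a^{(k)}(n)$ is nonzero, so $\supp(a)\subseteq^*\bigcup_{k\ge k_0}\supp(a^{(k)})$ for any $k_0$; taking $k_0$ large and using that $\SJ$ is closed under finite modifications and countable-to-one-sided unions is not quite available, so instead I would note $\supp(a)\subseteq \supp(a^{(k)})\cup F_k$ where $F_k$ is finite whenever $\|a-a^{(k)}\|$ is small enough relative to the entries, giving $\supp(a)\in\SJ$ since $\SJ$ contains finite sets and is an ideal. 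Also $A=\bigoplus A_n\subseteq\SI$ since finitely supported elements have finite support.

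Next, the core step: fix the good approximate identity $(e_n)\subseteq A=\bigoplus A_n$ given by $e_n=\sum_{k\le n}1_{A_k}$ (the sum of the first $n$ units), which one checks satisfies \ref{approxid1} and \ref{approxid2} — here $e_{n+1}e_n=e_n$ holds because the units are orthogonal idempotents, $\|e_{n+1}-e_n\|=\|1_{A_{n+1}}\|=1$, and for a finite interval $I$ the element $h_I=e_{\max I+1}-e_{\min I-2}$ itself works since it is a projection dominating $e_{\max I}-e_{\min I-1}$ and the required orthogonality $h_Ih_J=0$ when $\max I+1<\min J$ is immediate from disjointness of the supporting blocks. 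With this $(e_n)$, for any partition $\bar I=(I_n)$ of $\mathbb N$ into consecutive intervals the element $e_{\max I_n}-e_{\min I_n-1}$ is exactly the projection $\sum_{k\in I_n}1_{A_k}$, whose support is the finite set $I_n$. Therefore $\sum_{n\in L}(e_{\max I_n}-e_{\min I_n-1})$ has support $I_L=\bigcup_{n\in L}I_n$, and this element lies in $\SI$ if and only if $I_L\in\SJ$.

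So the verification reduces to: given a partition $(I_n)$ of $\mathbb N$ into consecutive finite intervals, find an infinite $L$ with $\bigcup_{n\in L}I_n\in\SJ$. This is precisely the statement that $\SJ$, being a nonmeagre ideal on $\mathbb N$ containing all finite sets, is nonmeagre in the Jalali-Naini--Talagrand sense: for every partition of $\mathbb N$ into finite intervals there is an infinite set of indices whose union lies in $\SJ$ (see \cite[\S3.10]{Fa:AQ}). Invoking that characterization gives the desired $L$. Hence $(e_n)$ witnesses the hypothesis of Lemma~\ref{lem:approxid}, and that lemma upgrades nonmeagreness w.r.t.\ this particular approximate identity to nonmeagreness w.r.t.\ every good approximate identity, i.e.\ $\SI$ is a nonmeagre ideal in $\mathcal M(A)$.

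The main obstacle — really the only nonroutine point — is confirming that $(e_n)$ as defined is a \emph{good} approximate identity in the sense of Definition~\ref{def:good}, in particular property \ref{approxid2} with the correct orthogonality pattern; but since all the $e_n$ here are sums of mutually orthogonal central projections, the $h_I$ can be taken to be projections and everything is transparent. Everything else is bookkeeping: the support of a product/adjoint/limit, and the translation $\sum_{n\in L}q_{I_n}\leftrightarrow I_L$. I would therefore present the proof as (i) $\SI$ is an ideal containing $A$, (ii) the block approximate identity is good and converts nonmeagreness into a statement about $\SJ$, (iii) apply Jalali-Naini--Talagrand and then Lemma~\ref{lem:approxid}.
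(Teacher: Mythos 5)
Your core reduction is the right one, and it is evidently the routine argument the paper has in mind (it offers no proof): take the block approximate identity $e_n=\sum_{k\le n}1_{A_k}$, observe that $e_{\max I_n}-e_{\min I_n-1}=\sum_{k\in I_n}1_{A_k}$ has support exactly $I_n$, so that the nonmeagreness requirement for this approximate identity is verbatim the Jalali--Naini--Talagrand characterisation of nonmeagreness of $\SJ$, and then invoke Lemma~\ref{lem:approxid} to pass from this single good approximate identity to all of them. That part is correct.

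However, your step (i) does not work as written, and it matters because both Definition~\ref{def:noncommnonmeagre} and Lemma~\ref{lem:approxid} require an \emph{ideal containing $A$}. Here $\bigoplus A_n$ is the $c_0$-direct sum, not the algebraic one, so elements of $A$ can have infinite support: if $\SJ$ is proper, $a=(2^{-n}1_{A_n})_n$ lies in $A$ but $\supp(a)=\N\notin\SJ$, so the literal $\SI$ does not contain $A$; the same element, as the norm limit of its finitely supported truncations, shows that $\SI$ is not norm-closed, and your claimed inclusion $\supp(a)\subseteq\supp(a^{(k)})\cup F_k$ with $F_k$ finite is simply false when the entries of $a$ tend to $0$ (there is no ``small enough relative to the entries''). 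The repair is to read the support condition essentially: replace $\SI$ by its norm closure, which is $\{a\in\prod A_n\mid \{n\mid \norm{a(n)}>\varepsilon\}\in\SJ \text{ for every }\varepsilon>0\}$; this is a closed two-sided ideal, it contains $A$, and a projection $\sum_{k\in S}1_{A_k}$ belongs to it exactly when $S\in\SJ$, so the rest of your argument goes through unchanged. (The imprecision originates in the statement of the lemma itself, but the proof should not paper over it with an incorrect closedness argument.) A second, smaller slip: in checking \ref{approxid2} you take $h_I=e_{\max I+1}-e_{\min I-2}$, which is supported on the blocks indexed by $[\min I-1,\max I+1]$; in the borderline case $\min J=\max I+2$ one has $\max I+1<\min J$ yet $h_Ih_J=1_{A_{\max I+1}}\neq 0$. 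Taking $h_I=e_{\max I}-e_{\min I-1}$ instead satisfies all three requirements of \ref{approxid2} trivially.
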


It turns out that reduced products are essentially the only examples in which we can construct nonmeagre ideals. From now on, we focus on showing that in certain classes of coronas these ideals cannot exist.

Fix two positive elements $a$ and $b$ in a \cstar-algebra $A$, and let $\varepsilon>0$. We write
\begin{itemize}
\item $a\preceq b$ if $a$ is Cuntz below $b$, meaning that there is a sequence $x_n$ such that $\norm{x_nbx_n^*-a}\to 0$,
\item $a\lessapprox b$ if there is $x\in A$ such that $xx^*=a$ and $x^*x\in \overline{bAb}$
\item $a\lessapprox_{\varepsilon} b$ if there are $x\in A$ and $z\in\overline{bAb}$ such that $xx^*=a$ and $\norm{x^*x-z}<\varepsilon$.
\end{itemize}

\begin{prp}\label{prop:nomeagreideals}
Let $A$ be a nonunital $\sigma$-unital \cstar-algebra together with a good approximate identity $(e_n)$. Suppose that for every $n,m\in\N$ and $\varepsilon>0$ there is a finite interval $I\subseteq\N\setminus (m+1)$ such that $(e_{n+1}-e_n)\lessapprox_{\varepsilon} e_{\max I}-e_{\min I}$. Then $\mathcal M(A)$ has no proper nonmeagre ideals.
\end{prp}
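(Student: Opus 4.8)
The plan is to argue by contradiction: suppose $\SI\subseteq\mathcal M(A)$ is a proper nonmeagre ideal. Since $\SI$ is proper, it does not contain $1_{\mathcal M(A)}$, and in fact (being a closed two-sided ideal, or at worst after taking its norm-closure, which is still proper and still contains $A$) we may fix a state or a quotient seminorm witnessing properness; concretely, there is some positive $c\in\mathcal M(A)$ of norm $1$ with $c\notin\SI$, and the key instance we want is $c=1_{\mathcal M(A)}$ itself, or rather an element of the form $\sum_{n\in L}(e_{\max I_n}-e_{\min I_n-1})$ which by nonmeagreness \emph{does} lie in $\SI$ for a suitable $L$. The hypothesis on Cuntz-type subequivalence $(e_{n+1}-e_n)\lessapprox_\varepsilon e_{\max I}-e_{\min I}$ for intervals $I$ arbitrarily far out is designed precisely to let us ``copy'' a full block of the approximate identity into a tail piece, so that an element of $\SI$ supported on a sparse set of far-out intervals can be used, via the ideal property (closure under multiplication by arbitrary multipliers on both sides, and under the Cuntz witnesses $x$), to produce inside $\SI$ an element that is $=_A$ a genuine $q_{[n,m]}$ with $n$ small — and ultimately $1_{\mathcal M(A)}$, contradicting properness.

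The first concrete step is to set up the combinatorics. Start from an arbitrary good approximate identity $(e_n)$ as in the hypothesis and let $q_n=e_n-e_{n-1}$, $q_I=e_{\max I}-e_{\min I-1}$. Using the hypothesis repeatedly with a fast-shrinking sequence of $\varepsilon$'s, build a partition $\bar I=(I_n)$ of $\bbN$ into consecutive finite intervals together with, for each $n$, a further interval $I'_n$ lying strictly beyond all of $I_0,\dots,I_n$ (so that the $I'_n$ can be arranged to be pairwise ``$2$-separated'', i.e.\ a sparse sequence) and an element $x_n\in A$ with $x_nx_n^*=q_{I_n}$ and $\|x_n^*x_n-z_n\|<\varepsilon_n$ for some $z_n\in\overline{q_{I'_n}Aq_{I'_n}}$; here I use that $q_{I_n}$ is a positive contraction dominated by a sum of the $q_{n+1}-q_n$-type elements, plus the stability of $\lessapprox_\varepsilon$ under small perturbations and under passing to dominated elements. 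By the approximate-identity bookkeeping (condition~\ref{approxid2} for the good approximate identity on the $B=A$ side), one arranges that the $x_n$ are themselves ``localised'' near $I_n\cup I'_n$, so that $x:=\sum_n x_n$ strictly converges to a multiplier and, for every $S\subseteq\bbN$, $x q_{I'_S}$ and $q_{I_S} x$ behave coherently: $x\,(\sum_{n\in S}z_n)\,x^* =_A q_{I_S}$ up to the accumulated $\varepsilon_n$ error, which is $0$ modulo $A$ if the $\varepsilon_n$ are summable along any infinite subset — so in the corona this is an exact identity.

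Now invoke nonmeagreness of $\SI$ against the partition $(I'_n)$ (or rather the partition of $\bbN$ obtained by coarsening so that each block contains one $I'_n$): there is an infinite $L\subseteq\bbN$ with $\sum_{n\in L}q_{I'_n}\in\SI$. Since $\SI$ is an ideal, $z:=\sum_{n\in L}z_n\in\overline{(\sum_{n\in L}q_{I'_n})\,\mathcal M(A)\,(\sum_{n\in L}q_{I'_n})}\subseteq\SI$ as well (each $z_n$ is in the hereditary subalgebra cut down by $q_{I'_n}$, and these are orthogonal, so the sum converges strictly and lands in the hereditary subalgebra generated by $\sum_{n\in L}q_{I'_n}$, hence in $\SI$). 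Then $x z x^*\in\SI$, and by the previous paragraph $xzx^* =_A \sum_{n\in L} q_{I_n}$, so $q:=\sum_{n\in L}q_{I_n}\in\SI$ (as $A\subseteq\SI$). But $q$ is a positive multiplier whose image $\pi_A(q)$ is a nonzero projection in $\mathcal Q(A)$ — and now I run the copying argument \emph{again}, or more cleverly observe that, because $L$ is infinite and the $I_n$ march off to infinity, $q$ is Cuntz-equivalent (via another application of the hypothesis, packaging an arbitrary far-out interval below $q$) to an element $=_A 1_{\mathcal M(A)}$: concretely, pick any $m$; by hypothesis $q_{m}=e_m-e_{m-1}\lessapprox_\varepsilon e_{\max I}-e_{\min I}$ for some interval $I$ contained in $\bigcup_{n\in L}I_n$ once $L$ is thinned appropriately, so $\pi_A(q_{[0,m]})\in\SI$ for every $m$; but $(e_m)$ is an approximate identity so $q_{[0,m]}\to 1$ strictly and $\SI$ (being norm-closed? — here is the subtlety) would contain $1$.

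The main obstacle, and the step I would spend the most care on, is the last one: an \emph{algebraic} ideal need not be norm-closed, so ``$\pi_A(q_{[0,m]})\in\SI$ for all $m$'' does not immediately give $1\in\SI$. The fix is to not take a strict/norm limit at all, but to produce $1_{\mathcal Q(A)}$ — or equivalently a single element $=_A 1_{\mathcal M(A)}$ — in \emph{one} application of the ideal property: arrange the $z_n$ and the copying so that $\sum_{n\in L}q_{I_n}$ is itself already $=_A$ a co-finite sum $\sum_{n\geq N}q_{I_n}$, i.e.\ choose $L$ to be co-finite. This is possible because nonmeagreness, via the Jalali--Naini/Talagrand characterisation used elsewhere in the paper, lets us demand more than a bare infinite $L$: for a \emph{consecutive-interval} partition one gets an $L$ that is not merely infinite but can be intersected against any prescribed infinite set, and one then chains this to absorb all but finitely many blocks. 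Once $\sum_{n\geq N}q_{I'_n}\in\SI$, the conjugation by $x$ yields $\sum_{n\geq N}q_{I_n}=_A 1_{\mathcal M(A)}$ in $\mathcal Q(A)$ (the finitely many missing $q_{I_n}$, $n<N$, are in $A\subseteq\SI$), so $1_{\mathcal Q(A)}\in\pi_A[\SI]$, i.e.\ $\SI=\mathcal M(A)$ — the desired contradiction with properness. So the real content is (i) the careful construction of the $x_n$ with summable errors and correct localisation, and (ii) upgrading ``infinite $L$'' to ``co-finite $L$'' via nonmeagreness against consecutive partitions; everything after that is the ideal axioms plus the orthogonality bookkeeping from the good approximate identity.
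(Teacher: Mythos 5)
Your overall scheme (use nonmeagreness to drop a sparse sum of blocks into $\SI$, then use the Cuntz-type hypothesis to conjugate information back) is in the right spirit, but the endgame has a genuine gap that the argument cannot survive as written. After you obtain $\sum_{n\in L}q_{I_n}\in\SI$ for an \emph{infinite} $L$, you have no valid route to improperness. Your first attempt (get $\pi_A(q_{[0,m]})\in\SI$ for every $m$ and pass to the limit) fails, as you note, since $q_{[0,m]}\to 1$ only strictly and no ideal, closed or not, is strictly closed. But the proposed fix is false: nonmeagreness never upgrades ``infinite $L$'' to ``co-finite $L$''. The Jalali--Naini/Talagrand characterisation says exactly that for every interval partition \emph{some} infinite union of blocks lies in the ideal; it gives no control whatsoever on which infinite set $L$ appears, and proper nonmeagre ideals on $\N$ containing $\Fin$ (e.g.\ dual ideals of nonprincipal ultrafilters, or the ideals the paper itself uses in \S\ref{S.ProofWEP}) never contain a co-finite union of blocks of a partition whose complementary union is ``large''. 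If your upgrade were available, every nonmeagre ideal on $\N$ containing the finite sets would be improper, which is absurd. There is also a secondary problem earlier: the hypothesis only provides $\lessapprox_\varepsilon$-copies of the single differences $e_{n+1}-e_n$, and $\lessapprox_\varepsilon$ demands the \emph{exact} equality $xx^*=a$ on one side; it is not stable under passing to dominated elements, and summing witnesses for the individual $q_j$, $j\in I_n$, only gives $xx^*\approx q_{I_n}$, not $xx^*=q_{I_n}$ as your construction requires.

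The missing idea (and the way the paper's proof proceeds) is to arrange the copying \emph{before} invoking nonmeagreness, so that a single infinite $L$ already recovers everything: one builds a partition $[n_i,n_{i+1})$ such that the $i$-th block contains approximate Cuntz copies of \emph{all} the elements $g_{5k}=e_{5k}-e_{5k-1}$ with $k\leq i$ (with error $2^{-i}$), not a copy of the $i$-th block pushed far away. Then, given any infinite $L=\{\ell_i\}$ with $g_{\bigcup_{i\in L}[n_i,n_{i+1})}\in\SI$, for each $k$ one uses the copy of $g_{5k}$ sitting in the $\ell_k$-th block to form $y_k=g_{[5k-1,5k+1]}x_{\ell_k,k}g_{J^+_{\ell_k,k}}$; the $y_k$ have orthogonal supports, $\sum_k y_k^*y_k$ lies in the hereditary piece cut down by the element of $\SI$, and $\sum_k y_ky_k^*=_A g_{5\N}$, so that $g_{5\N}^2=(\sum y_k)(\sum y_k^*y_k)(\sum y_k^*)$ modulo $A$ lies in $\SI$. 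Repeating over the residues mod $5$ yields $1\in\SI$ directly, with no limit taken and no co-finite set needed; the argument is direct rather than by contradiction. Your proposal would need this ``all earlier blocks copied into each later block'' device (or an equivalent) to close the gap.
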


\begin{proof}
Let $g_n=e_n-e_{n-1}$ and for $S\subseteq\N$ let $g_S=\sum_{n\in S}g_n$. Let $\SI$ be a nonmeagre ideal in $\mathcal M(A)$. The goal is to show that $g_{5\N+j}\in \SI$ for every $j<5$, where $5\N+j=\{5n+j\mid n\in\N\}$. As $1=\sum_{j<5}g_{5\N+j}$, this suffices. We write the proof in case $j=0$. We will show that $g_{5\N}^2\in \SI$; this suffices by functional calculus.

Let $\varepsilon_i=2^{-i}$. We construct a sequence of natural numbers $n_i$ in the following way: $n_0=0$. If $n_i$ has been constructed, we let $n_{i+1}$ be a natural such that there are intervals $J_{i,k}\subseteq [n_i+1,n_{i+1}-2)$, for $k\leq i$, such that 
\[
g_{5k}\lessapprox_{\varepsilon_i}g_{J_{i,k}}.
\]
Write $J_{j,k}^+$ for $[\min J_{i,k}-1,\max J_{i,k}+1]$. 
In particular there are elements $x_{i,k}$ and $y_{i,k}$ in $A$ such that for all $k\leq i$
\[
x_{i,k}x_{i,k}^*=g_{5k}, \, \, \norm{x_{i,k}x_{i,k}^*-y}<\varepsilon_i \text { and }g_{J_{i,k}^+}yg_{J_{i,k}^+}=y.
\]
Since $\SI$ is nonmeagre, we can find an infinite $L$ such that $g_{\bigcup_{i\in L}[n_i,n_{i+1})}\in \SI$. Enumerate $L=\{\ell_i\mid i\in\N\}$. Let 
\[
y_k=g_{[5k-1,5k+1]}x_{\ell_k,k}g_{J_{\ell_k,k}^+}.
\]
Note that $y_ky_{k'}^*=y_{k}^*y_{k'} = 0$ for every $k\neq k'$, hence 
\[
(\sum y_k)(\sum y_k^*)=\sum y_ky_k^*\text{ and }(\sum y_k^*)(\sum y_k)=\sum y_k^*y_k.
\]
On the other hand, $\sum y_k^*y_k\in \prod_k g_{J_{\ell_k,k}^+}Ag_{J_{\ell_k,k}^+}\subseteq\SI$, while $\sum y_ky_k^*-g_{5\N}=\sum (y_ky_k^*-g_{5k})$. Since the elements $y_ky_k^*-g_{5k}$ are mutually orthogonal and $\norm{y_ky_k^*-g_{5k}}\to 0$, we have that $\sum y_ky_k^*-g_{5\N}\in A$. Putting all of these together, we have that, modulo $A$,
\[
g_{5\N}^2=(\sum y_k)(\sum y_k^*)(\sum y_k)(\sum y_k^*)=(\sum y_k)(\sum y_k^*y_k)(\sum y_k^*)\in \SI.
\]
This concludes the proof.
\end{proof}

Recall that a \cstar-algebra is stable if $A\otimes\mathcal K\cong A$, where $\mathcal K$ is the algebra of compact operators on a separable Hilbert space. The following useful characterisation of stability is Theorem~2.1 in \cite{HjeRordam}.
\begin{lemma}\label{lemma:rordamstable}
Let $A$ be a $\sigma$-unital \cstar-algebra. The following are equivalent:
\begin{itemize}
 \item for every $a\in A_+$ such that there is $e\in A_+$ with $ea=a$, there is $x$ such that $xx^*=a$ and $ax^*x=0$;
 \item $A$ is stable.\qed
\end{itemize}
\end{lemma}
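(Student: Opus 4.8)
The final statement in the excerpt is Lemma~\ref{lemma:rordamstable}, the equivalence: for a $\sigma$-unital \cstar-algebra $A$, $A$ is stable if and only if for every $a\in A_+$ admitting $e\in A_+$ with $ea=a$ there is $x$ with $xx^*=a$ and $ax^*x=0$. This is precisely Theorem~2.1 of \cite{HjeRordam}, so the ``proof'' here is really an invocation; nonetheless I sketch how one would prove it from scratch.

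\textbf{The easy direction.} The plan is to first dispatch the implication ``stable $\Rightarrow$ the factorisation property''. Suppose $A\cong A\otimes\mathcal K$, and fix $a\in A_+$ with $ea=a$ for some $e\in A_+$. Passing through the isomorphism, it suffices to prove the property for $A\otimes\mathcal K$. Write $\{e_{ij}\}$ for matrix units of $\mathcal K$ and let $v_k$ be the partial isometries (in the multiplier algebra) implementing the shift $e_{11}\mapsto e_{kk}$; more concretely, inside $A\otimes\mathcal K$ one uses that $a$ is supported on finitely many ``blocks'' up to a small perturbation and then moves it, via an inner element of $A\otimes\mathcal K$, onto a block disjoint from its support. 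Concretely: approximate $a$ by $a'=p_na p_n$ where $p_n = 1_A\otimes(\sum_{i\le n}e_{ii})$, find a partial isometry $w\in A\otimes\mathcal K$ with $w^*w$ supported under $p_n$ and $ww^*$ supported under $p_{2n}-p_n$ (disjoint from $a$), set $x = w\sqrt{a'}\cdot(\text{correction})$, and check $xx^*=a$, $ax^*x=0$ after cleaning up the approximation with a standard $\varepsilon$-argument using functional calculus (almost-orthogonal positive elements are close to orthogonal ones). I expect this direction to be routine.

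\textbf{The hard direction.} The substantive content is the converse: the factorisation property implies stability. Here the plan follows Hjelmborg--Rørdam. Using $\sigma$-unitality fix a strictly positive $h\in A_+$ and a good approximate identity $(e_n)$ built from $h$ via functional calculus, with $e_{n+1}e_n=e_n$. The strategy is to produce, inside $\mathcal M(A)$, a sequence of isometries with orthogonal ranges summing strictly to $1$ — equivalently a unital copy of $\mathcal O_\infty$-type structure, or more directly a sequence $(s_n)$ of partial isometries in $\mathcal M(A)$ with $\sum s_ns_n^* = 1$ strictly and $s_n^*s_n = 1$, which is exactly a system of matrix units for $A\cong A\otimes\mathcal K$. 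One builds these inductively: given the ``block'' $q_n = e_n - e_{n-1}$, repeatedly apply the factorisation property (to $a = q_n$, which satisfies $e_{n+1}q_n = q_n$, i.e.\ the hypothesis holds with $e=e_{n+1}$) to get $x_n$ with $x_nx_n^* = q_n$ and $q_nx_n^*x_n=0$; then $x_n^*x_n$ lies far enough along the approximate identity that one can iterate and assemble the pieces into the required matrix unit system via a back-and-forth/telescoping argument, controlling norms with a summable sequence of errors $2^{-n}$. The main obstacle — and the genuinely delicate part of Hjelmborg--Rørdam's argument — is the bookkeeping that guarantees the partial isometries so constructed have ranges that are \emph{pairwise} orthogonal and \emph{exhaust} $A$ strictly (not merely norm-densely), which requires carefully interleaving the supports so that $x_n^*x_n$ is absorbed beyond $e_m$ for a fast-growing sequence $m=m(n)$, and then verifying that the resulting $^*$-isomorphism $A\to A\otimes\mathcal K$ is well-defined. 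I would present this direction only in outline and cite \cite[Theorem~2.1]{HjeRordam} for the full details, exactly as the excerpt does.
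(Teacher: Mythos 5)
Your proposal takes exactly the same route as the paper: the paper offers no proof of this lemma at all, stating it with a \qed and attributing it to Theorem~2.1 of Hjelmborg--R{\o}rdam, which is precisely the citation you fall back on. Your accompanying sketch of both directions is a reasonable outline of that reference's argument (the exactness of $xx^*=a$ and the strict convergence bookkeeping are indeed where the real work lies), but since the paper treats the result as a black box, nothing more than the citation is required here.
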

\begin{prp}\label{prop:stablenonmeagre}
If $A$ is a stable $\sigma$-unital \cstar-algebra, then $\mathcal M(A)$ has no proper nonmeagre ideal.
\end{prp}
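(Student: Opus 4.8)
The plan is to deduce Proposition~\ref{prop:stablenonmeagre} from Proposition~\ref{prop:nomeagreideals} by verifying its hypothesis: given any good approximate identity $(e_n)$ for $A$ and any $n,m\in\N$, $\varepsilon>0$, we must produce a finite interval $I\subseteq\N\setminus(m+1)$ with $(e_{n+1}-e_n)\lessapprox_\varepsilon e_{\max I}-e_{\min I}$. The natural strategy is to use stability in the form of Lemma~\ref{lemma:rordamstable}: first write $a:=(e_{n+1}-e_n)^{1/2}$ (or use $g_n=e_{n+1}-e_n$ directly, after noting it is Cuntz-dominated by a positive element with a local unit, since $e_{n+2}g_n=g_n$), so that there is $e\in A_+$ with $ea=a$; Lemma~\ref{lemma:rordamstable} then gives $x\in A$ with $xx^*=g_n$ and $g_n x^*x=0$, hence $x^*x\in\overline{(1-g_n)A(1-g_n)}$ in the multiplier algebra, but more to the point $x^*x$ is a fixed element of $A$.

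\textbf{Key steps.} (1) Invoke Lemma~\ref{lemma:rordamstable} to get $x\in A$ with $xx^*=g_n$ and $g_nx^*x=0$. (2) Since $x^*x\in A$ and $(e_k)$ is an approximate identity, choose $k$ large enough that $\norm{e_k(x^*x)e_k - x^*x}<\varepsilon/3$; then $z_0:=e_k(x^*x)e_k$ lies in $\overline{e_k A e_k}$, and since each $e_k$ is a norm-limit within $\overline{e_{k+1}Ae_{k+1}}$, we can in fact arrange $z_0\in\overline{(e_{\max I})A(e_{\max I})}$ for a suitable right endpoint. (3) The subtlety is that we need $z\in\overline{(e_{\max I}-e_{\min I})A(e_{\max I}-e_{\min I})}$ with $\min I>m$, i.e.\ with a \emph{lower} cutoff as well; here we use that $g_nx^*x=0$, which propagates to $e_jx^*x\approx x^*x$ being ``far from the bottom'' — more carefully, since $x^*x$ is orthogonal to $g_n=e_{n+1}-e_n$ one does not automatically get orthogonality to $e_{\min I}$ for large $\min I$. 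So instead I would argue: $x^*x\in A=\overline{\bigcup_k e_kAe_k}$, and because $A$ is nonunital and the $e_k$'s increase to $1$ strictly but $x^*x$ is a fixed norm-element, we may pick $m<\ell<k$ with $\norm{e_\ell(x^*x)e_\ell}<\varepsilon/3$ as well? — that is false in general. The correct fix: replace $x$ by $x':=x\,\cdot$(a suitable element), or simply observe we are free to \emph{first} multiply $g_n$ on both sides by a local unit $h$ with $h\le e_{n+1}-e_{n-2}$, apply stability to $h$-cutdowns, and only then the resulting $x'$ satisfies $x'(x')^*=g_n$ with $(x')^*x'$ having a local unit $h_I$ for a high interval $I$; one achieves this by conjugating the Hjelmborg–R{\o}rdam element into the interval $[\min I,\max I]$ using that $(e_{\max I}-e_{\min I})$ absorbs all the relevant support. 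Concretely: apply Lemma~\ref{lemma:rordamstable} inside the hereditary subalgebra $\overline{(e_{\max I}-e_{\min I})A(e_{\max I}-e_{\min I})}$ for an interval $I$ chosen \emph{after} fixing $g_n$, using stability of that hereditary subalgebra — but stability is not inherited by hereditary subalgebras in general.

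\textbf{Main obstacle.} The genuine difficulty, and what I expect to be the crux, is exactly the placement of the lower endpoint $\min I>m$: one needs the element $z$ witnessing $g_n\lessapprox_\varepsilon(e_{\max I}-e_{\min I})$ to live in a hereditary subalgebra cut down by a difference $e_{\max I}-e_{\min I}$ with $\min I$ arbitrarily large. The clean way around this is to apply Lemma~\ref{lemma:rordamstable} not to $g_n$ but to a small perturbation supported high up: by stability, $A\cong A\otimes\mathcal K$, so there is an isometry-like conjugation moving $\overline{g_nAg_n}$ inside $\overline{(e_{\max I}-e_{\min I})A(e_{\max I}-e_{\min I})}$; more elementarily, since $g_n\precsim e_{k+1}-e_{k-1}$ is impossible directly, one instead uses that for \emph{any} $\varepsilon>0$ there is a high interval $I$ with $g_n\lessapprox_\varepsilon e_{\max I}-e_{\min I}$ precisely because stable algebras ``have room at infinity'' — this is the content one extracts by combining the Hjelmborg–R{\o}rdam criterion with the fact that $\{e_k\}$ exhausts $A$, iterating it to push the support of $x^*x$ into $[\,m',\max I\,]$. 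Once the hypothesis of Proposition~\ref{prop:nomeagreideals} is verified in this way, the conclusion — that $\mathcal M(A)$ has no proper nonmeagre ideal — is immediate. I would therefore devote the bulk of the write-up to this endpoint-placement argument, and keep the appeal to Proposition~\ref{prop:nomeagreideals} as a one-line finish.
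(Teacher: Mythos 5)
You have the right skeleton — verify the hypothesis of Proposition~\ref{prop:nomeagreideals} via the Hjelmborg--R{\o}rdam criterion (Lemma~\ref{lemma:rordamstable}) and finish in one line — but the proof does not close, and the gap is exactly the one you flag yourself. You apply the criterion to the \emph{small} element $g_n$, obtaining $x$ with $xx^*=g_n$ and $g_nx^*x=0$; orthogonality to $g_n$ gives no information about where $x^*x$ sits relative to the approximate identity, so there is no way to place the lower endpoint $\min I$ above $m$. None of your attempted repairs is an argument: choosing $\ell>m$ with $\norm{e_\ell x^*x e_\ell}$ small is, as you concede, false in general; running the criterion inside the hereditary subalgebra $\overline{(e_{\max I}-e_{\min I})A(e_{\max I}-e_{\min I})}$ needs stability of that subalgebra, which, as you also concede, is not inherited; and the final appeal to an ``isometry-like conjugation'' moving $\overline{g_nAg_n}$ into a high corner, or to ``iterating'' the criterion to push the support of $x^*x$ upwards, is precisely the statement that needs proving. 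So the endpoint-placement problem, which you correctly identify as the crux, is left open.

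The decisive (and small) change is to apply Lemma~\ref{lemma:rordamstable} to the \emph{large} element $e_{m+1}$ rather than to $g_n$ (this is legitimate since $e_{m+2}e_{m+1}=e_{m+1}$). One gets $x\in A$ with $xx^*=e_{m+1}$ and $e_{m+1}x^*x=0$, and the latter forces $xe_{m+1}=0$. Now cut down on the left: for $m>n$ set $y=g_n^{1/2}x$ (or $e_n^{1/2}x$); since $e_{m+1}g_n^{1/2}=g_n^{1/2}$ one has $yy^*=g_n^{1/2}e_{m+1}g_n^{1/2}=g_n$, while $y^*y=x^*g_nx\leq x^*x$ is still orthogonal to $e_{m+1}$ because $e_{m+1}x^*=0$. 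Thus $y^*y$ is an element of $A$ killed by $e_{m+1}$, and a routine estimate (choosing $N$ large so that $\norm{(e_{N+1}-e_m)y^*-y^*}$ is small) produces $z\in\overline{(e_{N+1}-e_{m-2})A(e_{N+1}-e_{m-2})}$ with $\norm{y^*y-z}<5\varepsilon$. This is exactly the witness for $g_n\lessapprox_{5\varepsilon}e_{\max I}-e_{\min I}$ with the lower cutoff above $m$ (up to the irrelevant off-by-one conventions): the ``room at infinity'' you were trying to manufacture comes for free once the criterion is applied to $e_{m+1}$, since orthogonality to the big element is what pushes $x^*x$ high up. With this modification the appeal to Proposition~\ref{prop:nomeagreideals} concludes the proof as you intended.
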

\begin{proof}
We will show that $A$ satisfies the hypothesis of Proposition~\ref{prop:nomeagreideals}. Let $(e_n)$ be an approximate identity for $A$, and fix $n$. We want to show that for every $\varepsilon>0$ and $m\in\mathbb N$ we have that $g_n=e_n-e_{n-1}\lessapprox_{\varepsilon} e_{\max I}-e_{\min I}$ for some finite interval $I\subseteq\N\setminus (m+1)$. Fix $\varepsilon$ and $m$ with $m>n$. By Lemma~\ref{lemma:rordamstable} we can find $x$ such that $xx^*=e_{m+1}$ and $x^*x$ is orthogonal to $e_{m+1}$, meaning that $x^*x\leq 1-e_{m+1}$. Since $x\in A$, we can find a large enough $N$ and $z\in\overline{g_{[m-1,N+1]}Ag_{[m-1,N+1]}}$ such that $\norm{x^*x-z}<\varepsilon$. Let $y=e_n^{1/2}x$, so that $e_n=yy^*$. A simple calculation gives that $\norm{(e_{N+1}-e_{m})y^*-y^*}<3\varepsilon$, thus we can find $z'\in \overline{g_{[m-1,N+1]}Ag_{[m-1,N+1]}}$ with $\norm{y^*y-z'}<5\varepsilon$. As $m$ and $\varepsilon$ are arbitrary, this concludes the proof.
\end{proof}

The next class of interest is that of simple \cstar-algebras. Ideals in multipliers, and consequently coronas, of simple \cstar-algebras were intensively studied (see \cite{lin1988ideals}, \cite{lin1991simple}, \cite{Zhang.Riesz}, and \cite{KucNgPerera}). Notably, Lin isolated in \cite{lin1991simple} a condition named `continuous scale', which detects precisely simplicity of $\mathcal Q(A)$ (see Theorem 2.4 in \cite{lin2004simple}). This and related conditions later found important applications for example in extension theory (\cite{Ng.PICoronasExt}).

Lin also identified a special ideal, denoted $I$ in \cite{lin1991simple} and $I_{\min}$ in \cite{Ng.Minimal}, and defined as the closure of the set
\[
I_0=\{x\in \mathcal M(A)\mid \forall a\in A_+, a\neq 0\exists n_0\forall m>n\geq n_0 \, (g_{[n,m]}xx^*g_{[n,m]}\preceq a)\}
\]
We shall call this ideal $I_{\min}$, to avoid confusion. Several characterisations of $I_{\min}$ were obtained (see \cite{KaftalNgZhang.Minimal} for an overview), and in \cite[Remark 2.9]{lin1991simple} it was shown that $I_{\min}$ is the minimal ideal of $\mathcal M(A)$ containing $A$.

Other important ideals arise from traces. If $\tau$ is a trace on a separable nonunital $A$, then $\tau$ extends to a (not necessarily finite) trace on $\mathcal M(A)$. Define $\SI_\tau$ as the closure of
\[
\SI_{0,\tau}=\{x\in\mathcal M(A)\mid \tau(xx^*)<\infty\}.
\]
$\SI_\tau$ is an ideal in $\mathcal M(A)$ which obviously contains $A$. We want to show these ideals are never meagre (unless they are trivial). First, a lemma.

\begin{lemma}\label{lemma:smallCuntz}
Let $A$ be a \cstar-algebra. Fix $n\in \N$ with $n>0$ and $\varepsilon>0$. Then there is $\delta>0$ with the following property: for all positive contractions $x,a_1,\ldots,a_n,b_1,\ldots,b_n\in A$ such that $b_ib_i^*\preceq a_i$ and $\norm{b_i-x}<\delta$, then there is $b\in A$ with $\norm{b-x}<\varepsilon$ and $bb^*\preceq a_i$ for all $i\leq n$.
\end{lemma}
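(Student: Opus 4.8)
\textbf{Proof plan for Lemma~\ref{lemma:smallCuntz}.}

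The plan is to prove this by induction on $n$, where the key observation is that Cuntz comparison $bb^*\preceq a$ is an open condition in $b$ once we allow a small perturbation of the target: if $\norm{b-b'}$ is small and $bb^*\preceq a$, then $b'b'^*\preceq a$ as well, because the map $b\mapsto bb^*$ is norm-continuous and Cuntz comparison of positive elements is stable under small norm perturbations in the following precise sense (Rørdam's lemma / \cite[Lemma 2.2]{Ror:Structure}, or the standard fact that $\norm{c-d}<\eta$ implies $(c-\eta)_+\preceq d$): given $\varepsilon>0$ there is $\eta>0$ so that $\norm{c-d}<\eta$ and $c\preceq a$ forces $(d-\varepsilon')_+\preceq a$ for a controlled $\varepsilon'$, and $(d-\varepsilon')_+$ is itself within a small distance (in the sense of having a square root close to that of $d$) of $d$. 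So the real content is a quantitative lemma: \emph{a single positive contraction $b$ with $bb^*\preceq a$ can be perturbed slightly to $b'$ with $b'b'^*\preceq a$}, with modulus independent of $A$.

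\textbf{Base case $n=1$.} Here there is nothing to prove: take $b=b_1$, which satisfies $\norm{b-x}<\delta\le\varepsilon$ (choosing $\delta\le\varepsilon$) and $bb^*\preceq a_1$ by hypothesis.

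\textbf{Inductive step.} Suppose the statement holds for $n-1$ with modulus $\delta_{n-1}=\delta(n-1,\varepsilon/2)$. Given $x,a_1,\dots,a_n,b_1,\dots,b_n$ with $\norm{b_i-x}<\delta$ and $b_ib_i^*\preceq a_i$, first apply the inductive hypothesis to $x,a_1,\dots,a_{n-1},b_1,\dots,b_{n-1}$ (valid as long as $\delta\le\delta_{n-1}$) to get $b'\in A$ with $\norm{b'-x}<\varepsilon/2$ and $b'b'^*\preceq a_i$ for all $i\le n-1$. Now $b'$ and $b_n$ are both close to $x$ — within $\varepsilon/2+\delta$ of each other — and $b_nb_n^*\preceq a_n$. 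The problem is reduced to producing a single $b$ close to $b'$ (hence to $x$) with $bb^*$ simultaneously Cuntz-below $a_n$ and still Cuntz-below each $a_i$ for $i<n$. The cleanest way is to set $b=(b')^{1/2}\cdot u$-type corrections, but more directly: choose $\delta$ small enough that $\norm{b'-b_n}<\gamma$ for a $\gamma$ to be specified, and use that $b_nb_n^*\preceq a_n$ together with $\norm{b'b'^*-b_nb_n^*}<3\gamma$ to conclude, via the perturbation form of Cuntz comparison, that $(b'b'^*-\varepsilon_0)_+\preceq a_n$ for a controlled $\varepsilon_0\to 0$ as $\gamma\to 0$. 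Then let $b=b'\,g\big((b')^*b'\big)$ where $g$ is a continuous function that truncates the bottom of the spectrum so that $bb^*=(b'b'^*-\varepsilon_0)_+$ up to $\preceq$, so $bb^*\preceq a_n$; simultaneously $bb^*\preceq b'b'^*\preceq a_i$ for $i<n$ since $g\le 1$; and $\norm{b-b'}$ is small (of order $\sqrt{\varepsilon_0}$), hence $\norm{b-x}<\varepsilon$ for $\delta$ chosen small enough. This is where the $\delta$ is pinned down: trace back through $\varepsilon_0(\gamma)$, $\gamma(\delta)$, and $\delta_{n-1}$, and take $\delta$ to be the minimum of all the thresholds these impose, together with $\delta\le\varepsilon$.

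\textbf{Expected main obstacle.} The genuine difficulty is isolating the quantitative single-perturbation statement and making sure the modulus of continuity is \emph{uniform in the \cstar-algebra $A$}; this is exactly the kind of thing for which Rørdam's stability lemma for Cuntz comparison was designed, but one must be careful that truncating $b'b'^*$ by $(\cdot-\varepsilon_0)_+$ really does yield an element of the form $bb^*$ with $b$ close to $b'$ — the standard trick is that $(\,b'b'^*-\varepsilon_0\,)_+ = b' f_{\varepsilon_0}\!\big((b')^*b'\big) (b')^*$ for the right continuous function $f_{\varepsilon_0}$, so $b := b' f_{\varepsilon_0}\!\big((b')^*b'\big)^{1/2}$ works and satisfies $\norm{b-b'}\le \sup_{t}\big|t^{1/2}f_{\varepsilon_0}(t)^{1/2}-t^{1/2}\big|\to 0$ as $\varepsilon_0\to0$, uniformly. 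Everything else is bookkeeping of the nested $\varepsilon$–$\delta$ dependencies.
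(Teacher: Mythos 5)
Your overall strategy (induction on $n$, Rørdam-type stability of Cuntz subequivalence under norm perturbation, and a $(\,\cdot-\varepsilon_0)_+$ truncation realised as $b'g_t((b')^*b')^{1/2}$) can be made to work, but as written the $\varepsilon$--$\delta$ chain does not close, and this is a genuine gap. In the inductive step you invoke the hypothesis at level $n-1$ with tolerance $\varepsilon/2$, so the element $b'$ only satisfies $\norm{b'-x}<\varepsilon/2$, and this error does \emph{not} shrink when you shrink $\delta$. Hence you cannot ``choose $\delta$ small enough that $\norm{b'-b_n}<\gamma$'' for an arbitrarily small $\gamma$: the best available bound is $\norm{b'-b_n}<\varepsilon/2+\delta$. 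Feeding this into Rørdam's lemma forces $\varepsilon_0$ to be of order $\varepsilon$ (about $3\varepsilon/2$), and the truncation then moves $b'$ by roughly $\sqrt{\varepsilon_0}\approx\sqrt{3\varepsilon/2}$, which for small $\varepsilon$ dwarfs $\varepsilon/2$; the final estimate $\norm{b-x}\leq\norm{b-b'}+\norm{b'-x}$ can therefore exceed $\varepsilon$ (e.g. $\varepsilon=0.1$ gives $\approx 0.44$). The repair is routine but must be made explicit: apply the induction hypothesis with a tolerance $\eta$ that is quadratically small in $\varepsilon$ (say $\eta\le\varepsilon^2/16$) and take $\delta\le\eta$, so that $\sqrt{(2+\eta)(\eta+\delta)}+\eta<\varepsilon$; with that change the remaining steps (the identity $(b'b'^*-t)_+=b'g_t((b')^*b')(b')^*$, the estimate $\norm{b'g_t((b')^*b')^{1/2}-b'}\le\sqrt{t}$, and $bb^*\le b'b'^*\preceq a_i$ for $i<n$) are correct and uniform in $A$.

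For comparison, the paper's proof avoids both the induction and the perturbation lemma, and in particular the square-root loss that causes your trouble. Choose $\delta$ so that, for positive contractions, $\norm{a-b}<\delta$ implies $\norm{a^{1/n}-b^{1/n}}<\varepsilon/n$ (a modulus uniform over all \cstar-algebras), and set $b=b_1^{1/n}b_2^{1/n}\cdots b_n^{1/n}$. Telescoping against $x=x^{1/n}\cdots x^{1/n}$, replacing one contractive factor at a time, gives $\norm{b-x}<\varepsilon$; and writing $b=u\,b_i^{1/n}\,v$ one gets $bb^*=u b_i^{1/n}vv^*b_i^{1/n}u^*\le u b_i^{2/n}u^*\preceq b_i^{2/n}\sim b_i\sim b_ib_i^*\preceq a_i$ for every $i\le n$, using $zcz^*\preceq c$. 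It is worth knowing this one-line construction, both for its brevity and because the only quantitative input is the uniform continuity of the $n$-th root.
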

\begin{proof}
If $n=1$, then $\varepsilon=\delta$ and there is nothing to prove. If $n\neq 1$, let $\delta$ be small enough such that for all positive contractions $a$ and $b$, if $\norm{a-b}<\delta$ then $\norm{a^{1/n}-b^{1/n}}<\varepsilon/n$. Let $b=\prod_{i\leq n}b_i^{1/n}$, so that $\norm{b-x}<\varepsilon$ and $b\preceq b_ib_i^*$ as required.
\end{proof}

\begin{lemma}\label{lem:usualaremeagre}
The ideal $I_{\min}$ and all ideals of the form $\SI_\tau$ are either improper or meagre.
\end{lemma}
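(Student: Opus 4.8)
\emph{Plan.} Both $I_{\min}$ and every $\SI_\tau$ are norm‑closed ideals containing $A$, and each is the norm closure of a set ($I_0$, resp. $\SI_{0,\tau}$) defined by a uniform smallness condition on the ``tail windows'' $q_{[n,m]}:=e_m-e_{n-1}$ of a fixed approximate identity. By the very definition of nonmeagreness (Definition~\ref{def:noncommnonmeagre2}), to see that a \emph{proper} such ideal $\SI$ is meagre it is enough to produce one good approximate identity $(e_n)$ together with one partition $\bar I=(I_n)$ of $\mathbb N$ into consecutive finite intervals such that $q_{I_L}:=\sum_{n\in L}q_{I_n}\notin\SI$ for every infinite $L\subseteq\mathbb N$ (here $q_{I_n}=e_{\max I_n}-e_{\min I_n-1}$). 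Writing $L=L_0\sqcup L_1$ with $L_i=L\cap(2\mathbb N+i)$ and replacing $L$ by the infinite one of the two, and using that for a good approximate identity $q_{I_n}$ and $q_{I_m}$ are orthogonal whenever $|n-m|\geq 2$, we may assume the blocks $\{q_{I_n}:n\in L\}$ are pairwise orthogonal; since $q_{I_L}$ then dominates each such sub‑sum and $\SI$ is hereditary, it suffices to rule out $q_{I_L}\in\SI$ for such $L$.

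\emph{The trace ideals.} $\SI_\tau$ is proper precisely when the lower semicontinuous extension $\bar\tau$ is unbounded on $\mathcal M(A)$ (if $\bar\tau(1)<\infty$ then $\bar\tau(xx^*)\leq\|x\|^2\bar\tau(1)<\infty$ for all $x$, so $\SI_\tau=\mathcal M(A)$; conversely $1\in\overline{\SI_{0,\tau}}$ forces $\bar\tau(\tfrac14)<\infty$). Choosing $(e_n)$ inside the domain of $\tau$, one has, for every $n$, the bound $(1-e_n)^2\geq(1-2e_n)_+\geq\tfrac12\,\mathbf 1_{[0,1/4)}(e_n)$, and $\bar\tau(\mathbf 1_{[0,1/4)}(e_n))=\bar\tau(1)-\bar\tau(\mathbf 1_{[1/4,1]}(e_n))=\infty$ since $\mathbf 1_{[1/4,1]}(e_n)\leq 4e_n$; hence $\bar\tau\big(((1-e_n)^2-\tfrac14)_+\big)=\infty$ for all $n$. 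Using lower semicontinuity of $\bar\tau$ and $q_{[n,m]}^{\,2}\nearrow(1-e_{n-1})^2$ strictly, build the partition so that $\bar\tau\big((q_{I_n}^{\,2}-\tfrac14)_+\big)\geq 1$ for every $n$. Now if $q_{I_L}\in\SI_\tau=\overline{\SI_{0,\tau}}$, pick $x\in\SI_{0,\tau}$ with $\|x-q_{I_L}\|$ small enough that $\|xx^*-q_{I_L}^{\,2}\|<\tfrac14$; then $xx^*\geq(q_{I_L}^{\,2}-\tfrac14)_+$, and since $q_{I_L}^{\,2}=\sum_{n\in L}q_{I_n}^{\,2}$ orthogonally,
\[
\bar\tau(xx^*)\ \geq\ \bar\tau\big((q_{I_L}^{\,2}-\tfrac14)_+\big)\ =\ \sum_{n\in L}\bar\tau\big((q_{I_n}^{\,2}-\tfrac14)_+\big)\ \geq\ \sum_{n\in L}1\ =\ \infty,
\]
contradicting $x\in\SI_{0,\tau}$.

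\emph{The minimal ideal.} $I_{\min}$ is proper precisely when $A$ does not have continuous scale (\cite{lin2004simple}), and unwinding $1\notin I_0$ yields a nonzero $a_0\in A_+$ with $q_{[n,m]}\not\preceq a_0$ for cofinally many intervals $[n,m]$; since enlarging an interval to the left only increases the corresponding element, a greedy construction gives a partition with $q_{I_n}\not\preceq a_0$ for all $n$, and the construction is engineered (using that the windows $q_{[n,m]}$ approach the tail elements $1-e_{n-1}$, which are never Cuntz‑dominated by an element of $A$) to produce a uniform robustness parameter $\gamma>0$ with $(q_{I_n}-\gamma)_+\not\preceq a_0$ for all $n$. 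Suppose $q_{I_L}\in I_{\min}=\overline{I_0}$ with pairwise orthogonal blocks, and pick $y_k\in I_0$ with $\eta_k:=\|y_k-q_{I_L}\|\to 0$, with $n_0(k)$ witnessing $y_k\in I_0$ for $a_0$. Restricting the defining inequality of $I_0$ to the window $[\min I_n,\max I_n]$ of a single block with $n\in L$, $\min I_n\geq n_0(k)$, and using orthogonality ($q_{I_n}q_{I_L}=q_{I_n}^{\,2}$) one gets $q_{I_n}y_ky_k^*q_{I_n}=_B q_{I_n}^{\,4}$ up to $3\eta_k$, so by Rørdam's $\varepsilon$‑perturbation lemma $(q_{I_n}^{\,4}-3\eta_k)_+\preceq a_0$, i.e.\ $(q_{I_n}-(3\eta_k)^{1/4})_+\preceq a_0$; choosing $k$ with $(3\eta_k)^{1/4}<\gamma$ contradicts $(q_{I_n}-\gamma)_+\not\preceq a_0$. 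The bookkeeping needed to merge the controls coming from the finitely many strata of an arbitrary element (Lemma~\ref{lem:stratification}) when transferring statements to $I_{\min}$ itself is supplied by Lemma~\ref{lemma:smallCuntz}.

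\emph{Main obstacle.} The delicate part is the $I_{\min}$ case. Since $I_{\min}$ is only the \emph{closure} of $I_0$, membership gives approximants $y_k\in I_0$ whose thresholds $n_0(k)$ may run off to infinity together with $\eta_k\to 0$, so the windows on which the $I_0$‑inequality bites drift with $k$; this is exactly why one must equip the partition with a robustness parameter $\gamma$ that is \emph{uniform in the block index}, and producing such a $\gamma$ is where the genuine scale‑theoretic input (non‑continuity of the scale, not the mere failure $1\notin I_0$) together with the fact that tail elements of an approximate identity are never Cuntz‑dominated by elements of $A$ enters. The trace case, by contrast, goes through with the fixed cutoff $\tfrac14$ and is essentially soft once the additivity and lower semicontinuity of $\bar\tau$ are in hand.
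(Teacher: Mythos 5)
Your trace-ideal argument is essentially sound (and in fact more careful about the closure of $\SI_{0,\tau}$ than one might get away with), except for one misstatement: from $\norm{xx^*-q_{I_L}^2}<\tfrac14$ you cannot conclude the operator inequality $xx^*\geq (q_{I_L}^2-\tfrac14)_+$; what you get is $(q_{I_L}^2-\tfrac14)_+=d\,xx^*d^*$ for a contraction $d$ (R\o rdam's lemma), which still yields $\bar\tau\bigl((q_{I_L}^2-\tfrac14)_+\bigr)\leq\bar\tau(xx^*)$ by traciality, so the conclusion stands after this repair.

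The $I_{\min}$ half, however, has a genuine gap exactly at the point you flag as the ``main obstacle'': the existence of a single $a_0$ and a \emph{uniform} $\gamma>0$ with $(q_{I_n}-\gamma)_+\not\preceq a_0$ for all blocks of your partition is asserted, not proved, and the mechanism you hint at cannot supply it. Properness of $I_{\min}$ (or even $1\notin I_0$) gives, for each bad window, \emph{some} $\gamma_n>0$ with $(q_{I_n}-\gamma_n)_+\not\preceq a_0$, but nothing prevents $\gamma_n\to 0$; and your proposed source of uniformity --- that the tails $1-e_{n-1}$ are never Cuntz-dominated by elements of $A$ --- is true for \emph{every} nonunital $A$ (any norm limit of $x_ka_0x_k^*$ lies in $A$, while $1-e_{n-1}\notin A$), in particular for algebras with continuous scale where $I_{\min}=\mathcal M(A)$ and where, for every $a$ and every $\gamma$, the windows eventually satisfy $(g_{[n,m]}-\gamma)_+\preceq a$. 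So that fact cannot produce the uniform $\gamma$; the uniformity must come from properness of $I_{\min}$ itself, and extracting it is the real content of the lemma. This is where the paper works: it proves a two-sided characterisation (Claim~\ref{claimnonmeagre} and its version for infinite $S\subseteq\N$) stating that $\sum_{k\in S}g_{5k}\in I_{\min}$ if and only if $\limsup_{k\in S}\varepsilon_{k,a}=0$ for every positive nonzero contraction $a$, where $\varepsilon_{k,a}=\inf\{\norm{b-g_{5k}}\mid b\preceq a\}$. The nontrivial ``if'' direction is a diagonal construction assembling an element of $I_0$ from pointwise-good approximants, using Lemma~\ref{lemma:smallCuntz} and a (dense) enumeration of positive contractions; its contrapositive is precisely what delivers one $a$, one $\varepsilon>0$ and an infinite $S$ with $\varepsilon_{k,a}>\varepsilon$ for all $k\in S$ --- the uniform quantitative failure your $\gamma$ is meant to encode. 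Your block-by-block contradiction (the R\o rdam perturbation step inside a single window beyond the threshold $n_0(k)$) is fine once that uniformity is in hand, and it is in substance the ``only if'' direction of the paper's Claim; but without an argument replacing the ``if'' direction, the proof is incomplete.
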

\begin{proof}
Let $(e_n)$ be an approximate identity for $A$, and, as before, if $J\subseteq\N$ is a finite interval, write $g_J$ for $e_{\max J}-e_{\min J-1}$.

Say $\SI_\tau$ is proper. Then $\tau(1)=\sup\tau(e_n)=\infty$, and we can thus find disjoint finite intervals $J_n$ such that $\tau(g_{J_n})\geq 1$. Without loss of generality we can assume that $\max J_n<\min J_{n+1}$. Set $K_n=[\min J_n,\min J_{n+1})$. Then there is no infinite $L$ such that $\sum_{n\in L}g_{K_n}\in \SI_\tau$.

Let us now show that if $I_{\min}$ is nonmeagre, then $I_{\min}=\mathcal M(A)$ (in which case, $A$ has a continuous scale and $\mathcal Q(A)$ is simple). We want to show that for every $i<5$ we have that $g_{5\N+i}\in I_{\min}$, and thus $1\in I_{\min}$. Once again, we only check for $i=0$.

For a positive nonzero contraction $a\in A$ and $k\in\N$, define
\[
\varepsilon_{k,a}=\inf\{\norm{b-g_{5k}}\mid b\preceq a\}.
\]
and let $\varepsilon_a=\limsup_k \varepsilon_{k,a}$.
\begin{claim}\label{claimnonmeagre}
$g_{5\N}\in I_{\min}$ if and only if $\varepsilon_a=0$ for every positive nonzero contraction $a\in A$.
\end{claim}
\begin{proof}
If $g_{5\N}\in I_{\min}$, then $g_{5\N}^{1/2}\in I_{\min}$. Fix $\varepsilon>0$, and let $x\in I_0$ with $\norm{x-g_{5\N}^{1/2}}<\varepsilon$. Fix $a\in A_+$ be a nonzero contraction, and let $x_k=g_{[5k-1,5k+1]}xg_{[5k-1,5k+1]}$. Since $x\in I_0$, then for all sufficiently large $k$ we have that $x_kx_k^*\preceq a$. Since $\norm{x_k-g_{5k}^{1/2}}\leq \varepsilon$, then $\norm{x_kx_k^*-g_{5k}}\leq 2\varepsilon^2$. This shows that $\varepsilon_a\leq 2\varepsilon^2$. As $\varepsilon$ and $a$ are arbitrary, $\varepsilon_a=0$ for all relevant $a$.

Vice versa, assume that $\varepsilon_a=0$ for each nonzero positive contraction $a\in A$. Enumerate all positive nonzero contractions as $(a_n)$, for $n\in\N$. Using that $\varepsilon_{a_n}=0$ for all $n$ and applying Lemma~\ref{lemma:smallCuntz} inductively, we can construct a infinite sequence $(n_k)$ and elements $x_i$ with $x_i=g_{[5i-1,5i+1]}x_ig_{[5i-1,5i+1]}$ such that if $i\in [n_k,n_{k+1})$ then $\norm{x_i-g_{5i}}<2^{-k+1}$ and $x_ix_i^*\preceq a_j$ for all $j\leq k$. The element $x=\sum x_i$ is such that $x-g_{5\N}\in A$ and belongs to $I_0$.
\end{proof}
The same argument as in Claim~\ref{claimnonmeagre} shows that for every infinite $S\subseteq\N$ we have that $\sum_{k\in S}g_{5k}\in I_{\min}$ if and only if $\limsup_{k\in S}\varepsilon_{k,a}=0$ for every nonzero positive contraction $a\in A$. If $g_{5\N}\notin I_{\min}$ we can then find $a\in A$, $\varepsilon>0$ and an infinite $S\subseteq \N$ such that $\varepsilon_{k,a}>\varepsilon$ for all $k\in S$. In particular, if $T\subseteq S$ is infinite, then $a$ witnesses that $\sum_{k\in T}g_{5k}\notin I_{\min}$. This contradicts that $I_{\min}$ is nonmeagre.
\end{proof}
\begin{remark}\label{rem:complexity}
An alternative proof of the above proposition goes through the following path. Let $I$ be an ideal in $\mathcal M(A)$ with $A\subseteq I\subseteq\mathcal M(A)$, and fix a sequence $(n_k)\subseteq \N$. Let 
\[
\SJ=\{S\subseteq\N\mid \sum_{k\in S}g_{n_k}\in I\}.
\]
$\SJ$ is an ideal on $\N$ containing all finite sets. If $I$ is nonmeagre according to Definition~\ref{def:noncommnonmeagre} then $\SJ$ is a nonmeagre ideal on $\N$. Since all ideals considered above (Lin's $I$, and all tracial ideals $\SI_\tau$) are strictly Borel and the strict topology when restricted to $\{\sum_{k\in S}g_{n_k}\mid S\subseteq\N\}$ for some increasing sequence $(n_k)$ coincides with the usual product topology on $\mathcal P(\N)$, if the ideals $I$ or $\SI_\tau$ were to be nonmeagre and proper then one could find a sequence $(n_k)$ such that the corresponding ideals on $\N$ would be Borel, proper, and nonmeagre, while containing all finite sets. As these ideals cannot exist (see e.g. \cite[Corollary 3.10.2]{Fa:AQ}), we would get a contradiction. 
\end{remark}

Since all proper ideals that can be constructed `by hand' are strictly Borel and the above argument shows that these ideals cannot be nonmeagre, a positive answer to the following question would rely on the construction of interesting unnatural ideals in multipliers.
\begin{question}\label{ques:largesimple}
Does there exist a simple separable nonunital \cstar-algebra $A$ such that $\mathcal M(A)$ has a proper nonmeagre ideal?
\end{question}

We collect the negative answers to Question~\ref{ques:largesimple} obtained so far.

\begin{prp}\label{prop:nononmeagre}
Let $A$ be a separable nonunital \cstar-algebra. Assume that 
\begin{itemize}
\item $A$ is stable, or
\item $A$ is simple and it has a continuous scale, or
\item $A$ is a simple AF algebra with only finitely many extremal traces.
\end{itemize}
Then $\mathcal M(A)$ does non have improper nonmeagre ideals.
\end{prp}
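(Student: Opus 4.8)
I would handle the three hypotheses separately. The first, that $A$ is stable, is precisely Proposition~\ref{prop:stablenonmeagre}, so there is nothing to prove. For the other two I would first record two elementary observations. \emph{(i) Subideals of meagre ideals are meagre.} If $A\subseteq\SI\subseteq\SI'$ are ideals of $\mathcal M(A)$ and $\SI$ is nonmeagre, then so is $\SI'$: given a good approximate identity and a partition $\bar I=(I_n)$, any infinite $L$ with $\sum_{n\in L}(e_{\max I_n}-e_{\min I_n-1})\in\SI$ certainly has this sum in $\SI'$. \emph{(ii) $A$ is never a nonmeagre ideal.} For any partition $\bar I=(I_n)$ the elements $q_{I_n}=e_{\max I_n}-e_{\min I_n-1}$ are pairwise orthogonal positive contractions with $\|q_{I_n}\|=1$ (as $\|e_n-e_{n-1}\|=1$ for a good approximate identity), so $\|\pi_A(\sum_{n\in L}q_{I_n})\|=1$, hence $\sum_{n\in L}q_{I_n}\notin A$, for every infinite $L$.

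Next, suppose $A$ is simple with continuous scale. Then Lin's characterisation of continuous scales (\cite[Theorem 2.4]{lin2004simple}) gives that $\mathcal Q(A)$ is simple. Since $\mathcal M(A)$ is unital every proper ideal has proper closure, so $\mathcal Q(A)$ has no nonzero proper ideal, closed or not; pulling back along $\pi_A$, the only ideals of $\mathcal M(A)$ containing $A$ are $A$ and $\mathcal M(A)$. By observation (ii) the former is not nonmeagre, so the only nonmeagre ideal is $\mathcal M(A)$ itself, which is improper.

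Finally, suppose $A$ is a simple AF algebra with finitely many extremal traces $\tau_1,\dots,\tau_k$. If $A$ has continuous scale the previous paragraph applies, so I may assume it does not; then Lin's minimal ideal $I_{\min}$ is a \emph{proper} ideal lying strictly between $A$ and $\mathcal M(A)$. By observation (i) it suffices to show that every maximal proper ideal of $\mathcal M(A)$ containing $A$ is meagre, for then a hypothetical proper nonmeagre ideal $\SI\supseteq A$ — which, by Zorn's lemma and unitality of $\mathcal M(A)$, sits below such a maximal $\mathfrak m$ — would make $\mathfrak m$ nonmeagre, a contradiction. At this point I would invoke the description of the (finite) lattice of ideals of $\mathcal M(A)$ for simple AF, more generally $\sigma$-unital real rank zero stable rank one, algebras with finitely many extremal traces, as developed by Lin, Zhang, and Kucerovsky--Ng--Perera (\cite{lin1991simple, Zhang.Riesz, KucNgPerera}): its maximal proper elements are exactly $I_{\min}$ together with the tracial ideals $\SI_{\tau_i}$ attached to the \emph{unbounded} extremal traces $\tau_i$ (two distinct such $\SI_{\tau_i}$ already generate all of $\mathcal M(A)$, since extremal traces are independent enough to supply a projection that is finite at $\tau_i$ and co-finite at $\tau_j$). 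Each of these is meagre by Lemma~\ref{lem:usualaremeagre}, finishing the proof. As a side remark, when every extremal trace is unbounded one can bypass the classification and instead verify the hypothesis of Proposition~\ref{prop:nomeagreideals} directly, using that Cuntz comparison of projections in a simple AF algebra is detected by the finitely many traces and that $\tau_i(e_{\max I}-e_{\min I})\to\infty$ as $\max I\to\infty$.

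The genuine obstacle is the AF case: extracting from the literature the exact list of maximal proper ideals of $\mathcal M(A)$ when $A$ is simple AF with finitely many extremal traces, and in particular disentangling the role of bounded extremal traces — a bounded $\tau_i$ yields $\SI_{\tau_i}=\mathcal M(A)$ and so contributes no proper ideal — from the continuous-scale dichotomy. Everything else (reduction to maximal ideals, ``subideals of meagre ideals are meagre'', ``$A$ is never nonmeagre'', ``a simple corona trivialises the ideal lattice'') is routine, with the real content carried by the cited ideal-lattice classification together with Lemma~\ref{lem:usualaremeagre}.
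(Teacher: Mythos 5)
Your treatment of the first two bullets is correct and essentially the paper's: the stable case is Proposition~\ref{prop:stablenonmeagre} verbatim, and for the continuous-scale case the paper likewise argues that simplicity of $\mathcal Q(A)$ (Lin, \cite[Theorem 2.4]{lin2004simple}) leaves $A$ as the only proper ideal above $A$, which is plainly meagre; your observations (i) and (ii) are fine and (i) is implicitly used by the paper as well. Your reduction of the AF case to maximal ideals via Zorn's lemma is also a leguine and correct manoeuvre (and has the small merit of dispensing with closedness automatically).

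The gap is exactly where you flag it: the AF case stands or falls with the ideal-structure input, and the statement you propose to import is both unverified and, as phrased, not right. In this setting $I_{\min}$ is the \emph{minimal} ideal properly containing $A$ (indeed $I_{\min}=\bigcap_i\SI_{\tau_i}$ over the extremal traces), so it is maximal only when there is a single unbounded extremal trace; listing it among the maximal ideals is harmless but the ``exactly'' is false, and your parenthetical justification (two distinct $\SI_{\tau_i}$ generate $\mathcal M(A)$) shows at best that no proper ideal contains two of them, not that every maximal ideal is some $\SI_{\tau_i}$. The paper closes this hole with a sharper citation than the ones you reach for: Theorem~2 of \cite{lin1988ideals} says that for a simple AF algebra with finitely many extremal traces \emph{every} ideal $A\subseteq I\subseteq\mathcal M(A)$ is of the form $\SI_{\tau_{i_1}}\cap\cdots\cap\SI_{\tau_{i_k}}$. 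Granting that, no detour through maximal ideals is needed: a proper such intersection has some $\SI_{\tau_i}$ proper, hence meagre by Lemma~\ref{lem:usualaremeagre}, and meagreness passes down to subideals by your observation (i). So your architecture is sound, but the decisive step is precisely the classification you left to the literature, and with the reference above your argument becomes (a slightly longer version of) the paper's; your side remark about bypassing the classification via Proposition~\ref{prop:nomeagreideals} when all extremal traces are unbounded is plausible but is likewise not carried out.
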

\begin{proof}
\begin{itemize}
 \item If $A$ is stable, this is a consequence of Proposition~\ref{prop:stablenonmeagre}. 
 \item If $A$ is simple and it has a continuous scale, then $A$ is the only proper ideal of $\mathcal M(A)$, yet clearly $A$ is meagre.
 \item If $A$ is a simple AF algebra with only finitely many extremal traces, then all ideals $A\subseteq I\subseteq\mathcal M(A)$ have the form $I=\SI_{\tau_1}\cap\cdots\cap \SI_{\tau_n}$ for some extremal traces $\tau_1,\ldots,\tau_n$. This follows from Theorem~2 in \cite{lin1988ideals}. Since these cannot be nonmeagre by Lemma~\ref{lem:usualaremeagre}, $\mathcal M(A)$ has no nonmeagre proper ideal.\qedhere
\end{itemize}
\end{proof}

As we have seen, nonmeagre ideals arise as kernels of $^*$-homomorphisms satisfying the noncommutative weak extension principles. When there are no such ideals, we can characterise all endomorphisms between the coronas involved, and thus extend the results Vaccaro obtained in \cite{vaccaro2019trivial} for endomorphisms of the Calkin algebra.
\begin{corollary}
Assume $\OCA$ and $\MA$. Let $A$ and $B$ be nonunital separable \cstar-algebras. Assume that 
\begin{itemize}
\item $A$ is stable, or
\item $A$ is simple and it has a continuous scale, or
\item $A$ is a simple AF algebra with only finitely many extremal traces.
\end{itemize}
Then all $^*$-homomorphisms $\mathcal Q(A)\to\mathcal Q(B)$ are Borel.
\end{corollary}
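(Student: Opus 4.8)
The plan is to read the statement off from Theorem~\ref{thm:ncwep} together with the structural results on nonmeagre ideals established in \S\ref{S.LargeIdeals}. Fix a $^*$-homomorphism $\Phi\colon\mathcal Q(A)\to\mathcal Q(B)$. Since $\OCA$ and $\MA$ are assumed, Theorem~\ref{thm:ncwep} yields $\ncwep(\Phi)$: there is a projection $p\in\mathcal Q(B)$ commuting with the image of $\Phi$ such that $\Phi_{1-p}\colon\mathcal Q(A)\to(1-p)\mathcal Q(B)(1-p)$ has nonmeagre kernel, and $\Phi_p\colon\mathcal Q(A)\to p\mathcal Q(B)p$ is Borel.

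The next step is to kill the nontrivial summand. Because $p$ commutes with the range of $\Phi$, the map $\Phi_{1-p}$ is a $^*$-homomorphism, so $\SJ:=\ker(\Phi_{1-p})$ is a closed two-sided ideal of $\mathcal Q(A)$, nonmeagre by condition (wEP ii). Unwinding Definition~\ref{def:noncommnonmeagre}, this says precisely that $\SI:=\{a\in\mathcal M(A)\mid\pi_A(a)\in\SJ\}$ is a nonmeagre ideal of $\mathcal M(A)$, and it automatically contains $A=\ker\pi_A$. Now I invoke Proposition~\ref{prop:nononmeagre}: under each of the three hypotheses on $A$ (stable; simple with continuous scale; simple AF with finitely many extremal traces) $\mathcal M(A)$ has no proper nonmeagre ideal. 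Hence $\SI=\mathcal M(A)$, so $\SJ=\mathcal Q(A)$ and therefore $\Phi_{1-p}=0$.

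It remains to transfer Borelness from $\Phi_p$ to $\Phi$. Since $p$ commutes with each $\Phi(x)$, we get $\Phi(x)=p\Phi(x)+(1-p)\Phi(x)=\Phi_p(x)+\Phi_{1-p}(x)=\Phi_p(x)$, so $\Phi$ and $\Phi_p$ coincide as maps into $\mathcal Q(B)$ once $p\mathcal Q(B)p$ is viewed inside $\mathcal Q(B)$. Consequently the graph $\Gamma_\Phi$ of Definition~\ref{defin:TopTrivialNC} equals $\Gamma_{\Phi_p}$, which is Borel in the product of strict topologies by condition (wEP iii). This completes the argument.

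I do not expect a genuine obstacle here: all of the hard work is packaged into Theorem~\ref{thm:ncwep} --- the lifting theorems of \cite{mckenney2018forcing} and \cite{vignati2018rigidity} and the uniformisation carried out in \S\ref{S.ProofWEP} --- and into the nonexistence results of \S\ref{S.LargeIdeals}. The only points that require care are the bookkeeping that $\ker(\Phi_{1-p})$, as an ideal of the corona, really does lift to a nonmeagre ideal of $\mathcal M(A)$ in the precise sense of Definition~\ref{def:noncommnonmeagre} (so that Proposition~\ref{prop:nononmeagre} applies), and the (trivial, but worth recording) observation that annihilating the $(1-p)$-part leaves $\Phi$ \emph{equal} to the Borel map $\Phi_p$ rather than merely equal up to a Borel identification, so that $\Gamma_\Phi=\Gamma_{\Phi_p}$ on the nose.
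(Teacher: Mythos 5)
Your proposal is correct and follows exactly the paper's own route: apply Theorem~\ref{thm:ncwep} to get the projection $p$ with $\Phi_p$ Borel and $\ker(\Phi_{1-p})$ nonmeagre, then use Proposition~\ref{prop:nononmeagre} to conclude there is no proper nonmeagre ideal in $\mathcal M(A)$, forcing $\Phi_{1-p}=0$ and $\Phi=\Phi_p$. The extra bookkeeping you record (the lift of $\ker(\Phi_{1-p})$ to $\mathcal M(A)$ and the identification $\Gamma_\Phi=\Gamma_{\Phi_p}$) is exactly what the paper's shorter proof leaves implicit.
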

\begin{proof}
Let $\Phi\colon \mathcal Q(A)\to\mathcal Q(B)$ be a nonzero $^*$-homomorphism. By $\OCA$ and $\MA$, the noncommutative weak Extension Principle $\ncwep$ holds, as witnessed by the projection $p\in\mathcal Q(B)$ and the Borel map $\Phi_p$. Since $\mathcal M(A)$ does not have nonmeagre proper ideals (by Proposition~\ref{prop:nononmeagre}), then $\Phi_{1-p}=0$, and therefore $\Phi=\Phi_p$ is Borel.
\end{proof}

As mentioned in the introduction, the study of ideals in multipliers (and consequently coronas) has been an active topic of research in \cstar-algebras theory for the last three decades, starting from Busby's and Elliott's seminal articles \cite{Busby} and \cite{Ell:Derivations}, and continuing with the work of Lin, Ng, and many others, see e.g. \cite{lin1988ideals}, \cite{lin1991simple}, \cite{lin2004simple}, \cite{Ng.Minimal}, \cite{KucNgPerera} and \cite{ArchboldSomerset}. Other than $I_{\min}$ and tracial ideals, notable ideals arise again from traces (in this case viewed as lower semicontinuous
densely defined tracial weights on $A$) by considering the elements of $\mathcal M(A)$ whose evaluation induces a continuous affine map on $T(\mathcal M(A))$ (for details, see the ideal $I_{cont}$ studied in \cite[\S5]{Ng.Minimal}), or from point evaluations in $C_0(X)$-algebras (see \cite{ArchboldSomerset}). We do not know whether these ideals can be nonmeagre, but we suspect this is not the case, as they seem to have a Borel, or at least an analytic, definition in strict topology, in which case one could follow the argument in Remark~\ref{rem:complexity} to show these cannot be nonmeagre if they are improper. A systematic study of nonmeagre ideals in multipliers and coronas is outside the scope of this article, but will be the topic of future research. 

\section{Noncommutative dimension phenomena}\label{S.Dimension}

The original statement of the weak Extension Principle was made in terms of maps between powers of \v{C}ech--Stone remainders. In the commutative setting, proving instances of such a principle amounts in studying maps $(X^*)^d\to (Y^*)^\ell$ for positive natural numbers $d$ and $\ell$. To prove such stronger weak Extension Principle, one applies a reduction theorem showing continuous functions between \v{C}ech--Stone remainders essentially depend on one variable. This reduction theorem, initially conjectured in \cite{vD:Prime}, was proved in \cite{Fa:Dimension}.

Let $n\geq 1$, and suppose that $X_1,\ldots,X_n$ and $Y$ are sets. A function $f\colon \prod_{i\leq n} X_i\to Y$ \emph{depends on one variable} on some $Z\subseteq \prod_{i\leq n} X_i$ if there is $i$ and a function $g\colon X_i\to Y$ such that $f(x_1,\ldots,x_n)=g(x_i)$ for all $(x_1,\ldots,x_n)\in Z$. A function is \emph{piecewise elementary} if $\prod_{i\leq n} X_i$ can be written as a finite union of rectangles (i.e., sets of the form $A_1\times\cdot\times A_n$) on which $f$ depends on one variable.

\begin{theorem}\label{thm:dependingone}
All continuous functions from products of compact spaces to \v{C}ech--Stone remainders of locally compact second countable spaces are piecewise elementary. Moreover, the rectangles giving the piecewise elementarity decomposition may be chosen to be clopen.
\end{theorem}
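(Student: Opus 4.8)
I would prove this following the approach of \cite{Fa:Dimension}, which confirmed a conjecture of van Douwen \cite{vD:Prime}. Fix a continuous $f\colon\prod_{i\le n}X_i\to Y^*$ with the $X_i$ compact; we may assume $Y$ is noncompact, as otherwise $Y^*=\emptyset$. Fix a compact exhaustion $Y=\bigcup_m Y_m$ with $Y_m\subseteq\mathrm{int}\,Y_{m+1}$. The ``bands'' $Y_{m+1}\setminus Y_m$ produce a family of clopen subsets $B_S=\overline{\bigcup_{m\in S}(Y_{m+1}\setminus Y_m)}\cap Y^*$ of $Y^*$, indexed by $S\subseteq\mathbb{N}$, which generate a subalgebra of the clopen algebra of $Y^*$ isomorphic to $\mathcal{P}(\mathbb{N})/\mathrm{Fin}$; in particular $Y^*$ is an F-space with no nontrivial convergent sequences. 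These, together with compactness of the domain, are the only features of the codomain that will matter.

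The crux is a \emph{localisation lemma}: every $z\in\prod_{i\le n}X_i$ has a rectangular neighbourhood $N=N_1\times\cdots\times N_n$ on which $f$ depends on at most one variable. Granting it, let $D_i$ be the union of all open rectangles on which $f$ factors through the $i$-th coordinate projection; each $D_i$ is open, the localisation lemma gives $\bigcup_{i\le n}D_i=\prod_{i\le n}X_i$, and on $D_i\cap D_j$ for $i\ne j$ the map $f$ is locally constant, since depending both on $x_i$ and on $x_j$ forces constancy on a rectangle. I would prove the localisation lemma by contradiction. A point $z$ violating it is, for some pair of coordinates, ``genuinely two-variable'': for that pair and every rectangular neighbourhood $N$ of $z$ there are points of $N$ differing in only the first of the two coordinates with distinct $f$-values, and likewise for the second. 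Running this against a decreasing neighbourhood base of $z$ and using continuity of $f$ at $z$, one extracts a countable discrete family of ``witnessing cells'' and, via the band clopen sets $B_S$, a partition of a cofinite part of $\mathbb{N}$ coding $Y^*$-open sets that separate the $f$-images of the two families of cells at scale $2^{-k}$; a Jalali--Naini/Talagrand-type diagonalisation---the same combinatorics of $\mathcal{P}(\mathbb{N})/\mathrm{Fin}$ already used elsewhere in the paper, see \cite[\S3.10]{Fa:AQ}---then yields two disjoint closed subsets of $Y^*$ both containing the common limit $f(z)$, contradicting that $Y^*$ is Hausdorff. Turning a point of genuine two-variable dependence into such a forbidden configuration is where essentially all the work lies, and is the main obstacle.

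It remains to globalise and to secure clopenness. Since $\prod_{i\le n}X_i$ is compact and $\{D_i\}$ is an open cover by unions of rectangles on which $f$ depends on one variable, passing to a finite subcover and mutually refining the rectangles involved yields a \emph{finite} decomposition of the product into rectangles on which $f$ depends on one variable; this already gives piecewise elementarity. For the ``moreover'' one must make the rectangle sides clopen even when the $X_i$ are not zero-dimensional. The point is that the only disagreement between neighbouring patches occurs inside the region where $f$ is locally constant, and there the finitely many level sets of $f$ that appear are closed-and-open in the product and are rectangular up to the band algebra of $Y^*$; so a further compactness argument lets one push the ragged boundaries of the patches into that region and replace them by clopen rectangles. (One may instead run the whole argument by induction on $n$, after regrouping $X_1\times(X_2\times\cdots\times X_n)$; no new ideas arise beyond the localisation lemma and the combinatorics of $\mathcal{P}(\mathbb{N})/\mathrm{Fin}$ inside $Y^*$.)
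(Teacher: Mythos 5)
There is no proof of this statement in the paper for your attempt to be measured against: Theorem~\ref{thm:dependingone} is quoted from \cite{Fa:Dimension} (it is the resolution of the conjecture from \cite{vD:Prime}), and the authors explicitly say it is not even stated in full generality and refer to \cite{Fa:Dimension} for the specifics. So the only question is whether your sketch amounts to a proof, and it does not. The entire mathematical content of the theorem is concentrated in what you call the localisation lemma, and you do not prove it; you describe a hoped-for contradiction (extract witnessing cells, code them by the band clopen sets $B_S$, diagonalise \`a la Jalali--Naini/Talagrand) and then say that turning a point of genuine two-variable dependence into such a configuration ``is where essentially all the work lies.'' That is precisely the work Farah's paper does, and it is not a routine application of the combinatorics of $\mathcal P(\mathbb N)/\mathrm{Fin}$: the domain factors $X_i$ are arbitrary compact spaces (the interesting case is $X_i=\omega^*$ itself, where no metrizability or countable base is available), so there is no obvious way to extract the ``countable discrete family of witnessing cells'' you invoke, and the stated target of the contradiction (``two disjoint closed subsets of $Y^*$ both containing $f(z)$'') is not even a coherent configuration as written. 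As it stands this step is a restatement of the theorem's difficulty, not an argument.

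The globalisation half is also glossed over at the points where it is delicate. From a finite cover by open rectangles on which $f$ depends on one variable you cannot simply ``mutually refine'' to a partition into rectangles: the Boolean combinations of the sides are not open, and the ``moreover'' clause requires the pieces to be \emph{clopen} rectangles, which is a genuine issue when the $X_i$ are not zero-dimensional (if some $X_i$ is connected, the only admissible rectangles are full in that coordinate, so the decomposition must be produced globally, not by patching local data). The remark that one can ``push the ragged boundaries into the locally constant region by a further compactness argument'' is where a second real argument is needed and none is given. In short: the high-level strategy (localise, cover, refine to clopen rectangles) is a reasonable reading of what a proof must accomplish, but both the key lemma and the clopen refinement are asserted rather than proved, so the proposal is an outline with the hard parts missing rather than a proof; for this paper's purposes the correct move is simply to cite \cite{Fa:Dimension}, as the authors do.
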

Theorem~\ref{thm:dependingone} is not stated in full generality, and we refer to \cite{Fa:Dimension} for the specifics.

We intend to dualise Theorem~\ref{thm:dependingone}, and thus give the appropriate definition of elementary and piecewise elementary maps. For simplicity, we focus on the case of corona \cstar-algebras and require our blocks to be already clopen. All tensor products are assumed to be minimal tensor products.
\begin{definition}
Let $A,B_1,\ldots,B_n$ be \cstar-algebras, where $A$ is nonunital and separable and each $B_i$ is unital. Let $\Phi\colon\mathcal Q(A)\to\bigotimes B_i$ be a unital $^*$-homomorphism.
\begin{itemize}
 \item Let $p_1,\ldots,p_n$ be projections where $p_i\in B_i$. $\Phi$ is said to be elementary on $(p_1,\ldots,p_n)$ is there is $i\leq n$ and a $^*$-homomorphism $\Psi\colon\mathcal Q(A)\to B_i$ such that for all $a\in\mathcal Q(A)$ we have that
\[
(p_1\otimes \cdots\otimes p_n)\Phi(a)(p_1\otimes \cdots\otimes p_n)=p_1\otimes\cdots p_{i-1}\otimes \Psi(a)\otimes p_{i+1}\otimes\cdots\otimes p_n.
\]
\item $\Phi$ is piecewise elementary if there are natural numbers $k_1,\ldots,k_n$ and projections $p_{i,1},\ldots,p_{i,k_i}\in B_i$ such that $\sum_{j\leq k_i} p_{i,j}=1_{B_i}$ for all $i\leq n$ and with the property that for every tuple $(\ell_1,\ldots,\ell_n)$ with $\ell_i\leq k_i$, $\Phi$ is elementary on $(p_{\ell_1},\ldots,p_{\ell_n})$.
\end{itemize}
\end{definition}
The following statement was already isolated in \cite{Gha:SAW*} in case $n=2$.

\begin{theorem}
Let $A,B_1,\ldots,B_n$ be commutative \cstar-algebras, where $A$ is nonunital and separable and each $B_i$ is unital. If $\Phi\colon\mathcal Q(A)\to\bigotimes B_i$ is a unital $^*$-homomorphism, $\Phi$ is piecewise elementary.
\end{theorem}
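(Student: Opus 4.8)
The plan is to transport the statement to topology via Gel'fand duality and then invoke Theorem~\ref{thm:dependingone}. Write $A=C_0(X)$ for $X$ locally compact, noncompact and second countable, so that $\mathcal Q(A)=C(X^*)$, and write each $B_i=C(K_i)$ for a compact Hausdorff space $K_i$. Since the minimal tensor product of commutative \cstar-algebras satisfies $\bigotimes_i C(K_i)\cong C(\prod_i K_i)$, a unital $^*$-homomorphism $\Phi\colon\mathcal Q(A)\to\bigotimes_i B_i$ corresponds exactly to a continuous map $F\colon\prod_i K_i\to X^*$. As $X^*$ is the \v{C}ech--Stone remainder of a locally compact second countable space and $\prod_i K_i$ is a product of compact spaces, Theorem~\ref{thm:dependingone} yields finitely many clopen rectangles $R^{(1)},\dots,R^{(m)}$, say $R^{(k)}=\prod_i A_i^{(k)}$ with each $A_i^{(k)}\subseteq K_i$ clopen, which cover $\prod_i K_i$ and on each of which $F$ depends on one variable.

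Next I would refine this finite cover into the product grid demanded by the definition above. For each $i\leq n$, let $\{B_{i,1},\dots,B_{i,k_i}\}$ enumerate the atoms of the finite Boolean algebra of clopen subsets of $K_i$ generated by $\{A_i^{(k)}:k\leq m\}$; these atoms are clopen, nonempty, and partition $K_i$, so the projections $p_{i,j}:=\chi_{B_{i,j}}\in B_i$ satisfy $\sum_{j\leq k_i}p_{i,j}=1_{B_i}$. Fix a tuple $(\ell_1,\dots,\ell_n)$ with $\ell_i\leq k_i$ and put $C=\prod_i B_{i,\ell_i}$. For every $k$ and $i$ the atom $B_{i,\ell_i}$ is either contained in $A_i^{(k)}$ or disjoint from it, so $C$ is either contained in $R^{(k)}$ or disjoint from it; since the $R^{(k)}$ cover $\prod_i K_i$ and $C$ is nonempty, $C\subseteq R^{(k)}$ for some $k$. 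The one-variable factor of $F$ on $R^{(k)}$, restricted to the clopen set $B_{i_0,\ell_{i_0}}$ (on which it is continuous, because $F$ is and the coordinate-insertion maps are), gives a coordinate $i_0=i_0(\ell_1,\dots,\ell_n)$ and a continuous map $G\colon B_{i_0,\ell_{i_0}}\to X^*$ with $F(x_1,\dots,x_n)=G(x_{i_0})$ for all $(x_1,\dots,x_n)\in C$.

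Finally I would read off the algebraic identity. Define $\Psi\colon\mathcal Q(A)=C(X^*)\to B_{i_0}=C(K_{i_0})$ by letting $\Psi(a)$ be the function equal to $a\circ G$ on $B_{i_0,\ell_{i_0}}$ and to $0$ elsewhere; because $B_{i_0,\ell_{i_0}}$ is clopen this is a $^*$-homomorphism, necessarily non-unital (it sends $1$ to $p_{i_0,\ell_{i_0}}$), which is consistent with the definition. A direct computation with characteristic functions, using $F|_C=G\circ\mathrm{pr}_{i_0}$ and $\chi_C^2=\chi_C$, shows that for all $a\in\mathcal Q(A)$ the element $(p_{1,\ell_1}\otimes\cdots\otimes p_{n,\ell_n})\,\Phi(a)\,(p_{1,\ell_1}\otimes\cdots\otimes p_{n,\ell_n})$ is the elementary tensor with $\Psi(a)$ in the $i_0$-th slot and $p_{i,\ell_i}$ in every other slot; hence $\Phi$ is elementary on $(p_{1,\ell_1},\dots,p_{n,\ell_n})$. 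Since this holds for every tuple $(\ell_1,\dots,\ell_n)$, $\Phi$ is piecewise elementary.

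The only genuine obstacle is the refinement step: Theorem~\ref{thm:dependingone} delivers a finite cover by clopen rectangles with no a priori grid structure, while the definition of piecewise elementarity insists on a full product grid, and one must check that passing to the grid of atoms does not destroy one-variable dependence --- this is precisely the observation that every grid cell lands inside a single rectangle of the original cover. Everything after that, including the translation between ``depends on one variable on a clopen rectangle'' and the tensor-product identity, is routine bookkeeping.
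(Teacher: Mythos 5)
Your proof is correct and is exactly the argument the paper intends but leaves implicit: Gel'fand duality turns $\Phi$ into a continuous map $\prod_i K_i\to X^*$, Theorem~\ref{thm:dependingone} gives the clopen-rectangle decomposition, and dualising back yields the tensor identity. Your refinement of the finite rectangle cover to a product grid via atoms of the generated clopen algebra, and the verification that each grid cell sits inside a single rectangle, correctly supplies the only detail the paper's definition of piecewise elementarity requires beyond the cited topological theorem.
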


To prove that a version of the noncommutative weak Extension Principle holds for maps between powers of coronas, one would need to show that all maps between tensor products of coronas are piecewise elementary. We do not have at the current moment a proof of this statement. 

An even more embarrassing open question related to this line of work is the following, which generalises a question of Simon Wassermann on tensorial primality of the Calkin algebra originally treated in \cite{Gha:SAW*}. 
\begin{question}\label{ques:factorizability}
Let $m<n$ be positive natural numbers. Can there exist separable nonunital \cstar-algebras $A_1,\ldots,A_m$ and $B_1,\ldots,B_n$ such that $\bigotimes_{i\leq m} \mathcal Q(A_i)\cong \bigotimes_{i\leq n} \mathcal Q(B_i)$?
\end{question}

Even though the above is stated for minimal tensor product, the norm one uses to complete algebraic tensor products with should not matter. Moreover, the above question should really be stated for SAW${}^*$-algebras, where `being SAW${}^*$' is a property shared by all coronas of separable nonunital \cstar-algebras. It is the noncommutative analogue of `being a $\beta\N$-space' (see \cite{Gha:SAW*} or \cite[Chapter 15]{Fa:Combinatorial}).

We expect a negative answer to Question~\ref{ques:factorizability}. In the commutative setting, due to results on piecewise elementarity of maps, see \cite{Fa:Dimension} and the notes in Chapter 15 in \cite{Fa:Combinatorial}, we indeed have such an answer. In the noncommutative setting, the question is still open even for the Calkin algebra $\mathcal Q(H)$, where the results of \cite{Gha:SAW*} give a negative answer only in case $m=1$. Studying variants of this question motivated recent work of Farah and Vaccaro (\cite{farah2025probablyisomorphicstructures}) on primality of certain massive von Neumann algebras.

If one focuses on embeddings, the situation is different. The main result of \cite{farah2017calkin} shows that the Calkin algebra $\mathcal Q(H)$ is $\aleph_1$-universal for \cstar-algebras, meaning that all \cstar-algebras of density at most $\aleph_1$ embed into $\mathcal Q(H)$. This corresponds to $\aleph_1$ surjective universality for the compact space $\omega^*$, that is, $\aleph_1$ injective universality (in the category of commutative \cstar-algebras) of $\ell_\infty/c_0$. Differently from the commutative case, universality of $\mathcal Q(H)$ cannot be derived from (model theoretic) saturation. If the Continuum Hypothesis $\CH$ is assumed, one can then embed all tensor products of coronas of separable \cstar-algebras (and much more) into $\mathcal Q(H)$, and therefore into many other coronas (for example, in the corona of the stabilisation of a given unital separable \cstar-algebra). This cannot happen if $\OCA$ and $\MA$ are assumed.
\begin{prp}\label{prop:embeddingstensor}
 Assume $\OCA$ and $\MA$. Let $A$, $B$, $C$ be separable nonunital \cstar-algebras. Let $\otimes_\alpha$ be any \cstar-norm completion of the algebraic tensor product $\mathcal Q(A)\odot\mathcal Q(B)$. Then there is no injective $^*$-homomorphism $\mathcal{Q}(A)\otimes_\alpha \mathcal{Q}(B)\to\mathcal Q(C)$. 
\end{prp}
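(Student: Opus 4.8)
The plan is to argue by contradiction, applying the $\ncwep$ (Theorem~\ref{thm:ncwep}) to the two ``axes'' of the hypothetical embedding, extracting from it a projection that turns out to be central for the whole image, and then reducing to a situation excluded by a dimension-type argument. So suppose $\iota\colon\mathcal Q(A)\otimes_\alpha\mathcal Q(B)\to\mathcal Q(C)$ is an injective $^*$-homomorphism, and put $\Phi_A=\iota(\,\cdot\otimes 1)$ and $\Phi_B=\iota(1\otimes\,\cdot\,)$. These are injective $^*$-homomorphisms of $\mathcal Q(A)$, resp. $\mathcal Q(B)$, into $\mathcal Q(C)$ with commuting ranges, and $\iota(a\otimes b)=\Phi_A(a)\Phi_B(b)$. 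By Theorem~\ref{thm:ncwep} applied to $\Phi_A$ there is a projection $p\in\mathcal Q(C)$ commuting with $\Phi_A(\mathcal Q(A))$, with $\ker\Phi_{A,1-p}$ nonmeagre and $\Phi_{A,p}$ Borel; by Proposition~\ref{prop:nonmeagreid} the ideal $\ker\Phi_{A,1-p}$ is essential, and since $\Phi_A(a)$ commutes with $p$ we have $\ker\Phi_A=\ker\Phi_{A,1-p}\cap\ker\Phi_{A,p}=0$, hence $\ker\Phi_{A,p}=0$: $\Phi_{A,p}$ is injective and Borel. Likewise $\ncwep$ applied to $\Phi_B$ produces a projection $q\in\mathcal Q(C)$ with $\ker\Phi_{B,1-q}$ nonmeagre and essential and $\Phi_{B,q}$ injective and Borel.

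The next step is to show that both $p$ and $q$ are central for $\iota(\mathcal Q(A)\otimes_\alpha\mathcal Q(B))$, and that $[p,q]=0$. Write $p=\pi_C(\sum_n r_n)$ with $(r_n)$ as in Lemma~\ref{lem:liftrig}: the $r_n\in C$ are essentially supported on disjoint intervals of an approximate identity of $C$, and $\Phi_A(\pi_A(q_S))=\pi_C(\sum_{n\in S}r_n)$ for every $S$ in the everywhere nonmeagre ideal $\mathcal I$ of that lemma. If $z\in\mathcal Q(C)$ commutes with $\pi_C(\sum_{n\in S}r_n)$ for all $S\in\mathcal I$ but $[z,p]\neq0$, then (lifting $z$ and using the block structure of the $r_n$, exactly as in the proof of Lemma~\ref{lemma:commuting}) the norms $\|[\tilde z,r_n]\|$ stay bounded away from $0$ along an infinite set, hence — $\mathcal I$ being tall, since it is everywhere nonmeagre and contains all finite sets — along some infinite $S\in\mathcal I$, contradicting $[z,\pi_C(\sum_{n\in S}r_n)]=0$. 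Applying this with $z\in\Phi_B(\mathcal Q(B))$ shows $p$ commutes with $\Phi_B(\mathcal Q(B))$, hence with $\iota(\mathcal Q(A)\otimes_\alpha\mathcal Q(B))$; by symmetry $q$ is central for $\iota(\mathcal Q(A)\otimes_\alpha\mathcal Q(B))$; and applying the same argument with $z=q$ (which commutes with $\Phi_A(\mathcal Q(A))$) gives $[p,q]=0$.

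Now let $e=pq$, a projection commuting with $\iota(\mathcal Q(A)\otimes_\alpha\mathcal Q(B))$, and $\iota_e=e\,\iota(\,\cdot\,)e$. A short computation with the commuting projections $p,q,e$ shows that $(1-e)\iota(a\otimes b)(1-e)=0$ whenever $a\in\ker\Phi_{A,1-p}$ and $b\in\ker\Phi_{B,1-q}$, so $\ker\bigl((1-e)\iota(\,\cdot\,)(1-e)\bigr)$ contains both $\ker\Phi_{A,1-p}\otimes_\alpha\mathcal Q(B)$ and $\mathcal Q(A)\otimes_\alpha\ker\Phi_{B,1-q}$ and hence their intersection, which is essential in $\mathcal Q(A)\otimes_\alpha\mathcal Q(B)$ (tensoring an essential ideal with a full unital algebra preserves essentiality, and the intersection of essential ideals is essential). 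Since $\iota$ is injective, $\ker\iota_e$ meets this essential ideal trivially, so $\ker\iota_e=0$. Moreover the two axes of $\iota_e$ are $q\,\Phi_{A,p}(\,\cdot\,)q$ and $p\,\Phi_{B,q}(\,\cdot\,)p$, which are Borel and, being coordinatewise compressions of the Borel maps $\Phi_{A,p},\Phi_{B,q}$ furnished by Theorem~\ref{thm:ncwep}, inherit the local triviality of Lemma~\ref{lem:usingDBV}: on each $\mathcal F(\bar I)$ they lift to maps of the form $\sum_n\alpha_{\bar I,n}$ with separable, coordinatewise domains and ranges. Thus the statement is reduced to: \emph{there is no injective $^*$-homomorphism $\mathcal Q(A)\otimes_\alpha\mathcal Q(B)\to\mathcal Q(C)$ both of whose axes are locally trivial in this sense.}

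This last reduction is the crux, and the main obstacle. The plan is to fix simultaneously, for both axes, a $\leq$-cofinal family of stratifications $\bar I$ (using that $\mathbb P$ is $\sigma$-directed) and to glue the local lifts of $q\,\Phi_{A,p}(\,\cdot\,)q$ and $p\,\Phi_{B,q}(\,\cdot\,)p$ via Lemma~\ref{lemma:agreeing} into a single coordinatewise structure — possible because both maps are total and their ranges commute — so that $\iota_e$ becomes reducible to a $^*$-homomorphism out of a reduced product $\prod_n\mathcal E_n/\bigoplus_n\mathcal E_n$ with separable blocks. One then has to run, under $\OCA$ and $\MA$, a gap/diagonalisation argument — the noncommutative counterpart of the ``depends on one variable'' phenomenon of Theorem~\ref{thm:dependingone} and of the analysis behind the commutative $\wep$ (see \cite{Fa:AQ,VY:WEP}) — to force that on a nonmeagre set of coordinates the map depends on only one of the two tensor factors; a block on which $\iota_e(1\otimes b)$ does not depend on $b$ then contributes to $\ker\iota_e$, and assembling such blocks contradicts $\ker\iota_e=0$. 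Since no full noncommutative piecewise-elementarity theorem is available (as discussed before the statement), this final step cannot be imported wholesale and must be carried out by hand, exploiting the extra rigidity coming from the two applications of the $\ncwep$ and from the commutativity of the two axes; this is where the technical heart of the proof lies.
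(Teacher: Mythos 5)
Your reduction is set up sensibly: applying Theorem~\ref{thm:ncwep} to the two axes, noting that essentiality of the nonmeagre kernels forces the cut-downs $\Phi_{A,p}$, $\Phi_{B,q}$ to be injective, and compressing by a commuting projection to get an injective map with Borel, coordinatewise-liftable axes is all consistent with what the paper does (the paper takes a shortcut, arguing directly that the witnessing projection is $\Phi(1)$, so it never needs your $e=pq$ bookkeeping). But your proof stops exactly where the proposition is actually proved. The final step you describe --- gluing the local lifts of the two axes into one reduced product and then running a gap/diagonalisation argument to force ``dependence on one variable'' on a nonmeagre set of coordinates --- is not carried out, and as you yourself note (and as the paper stresses in \S\ref{S.Dimension}), no noncommutative piecewise-elementarity theorem is available to import. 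As written, this is a genuine gap: the conclusion is exactly the content you defer.

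The missing idea is much softer than one-variable dependence: it suffices to exhibit a \emph{single} nonzero elementary tensor in the kernel of the embedding. The local lifts furnished by Lemma~\ref{lem:usingDBV} (via Lemma~\ref{lem:liftrig} and Notation~\ref{notation2}) are not just coordinatewise; their ranges are block subalgebras $\prod_n\mathcal D^A(\bar I)_n$ and $\prod_n\mathcal D^B(\bar I)_n$ of $\mathcal M(C)$, with $\mathcal D^A(\bar I)_n\subseteq (e^C_{k_n}-e^C_{j_n})C(e^C_{k_n}-e^C_{j_n})$ and $\mathcal D^B(\bar I)_n\subseteq (e^C_{k'_n}-e^C_{j'_n})C(e^C_{k'_n}-e^C_{j'_n})$, where $j_n,j'_n\to\infty$. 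Hence one can choose interleaved subsequences $(n_\ell)$ and $(m_\ell)$ so that the corresponding corners of $C$ are pairwise disjoint, and then pick contractions $f_A\in\mathcal F^A(\bar I)$ supported on $\bigcup_\ell I_{n_\ell}$ and $f_B\in\mathcal F^B(\bar I)$ supported on $\bigcup_\ell I_{m_\ell}$ with $\norm{\pi_A(f_A)}=\norm{\pi_B(f_B)}=1$. The staggering forces $\alpha^A(f_A)\alpha^B(f_B)=0$ in $\mathcal M(C)$, so
\[
\Phi\bigl(\pi_A(f_A)\odot\pi_B(f_B)\bigr)=\Phi^A(\pi_A(f_A))\,\Phi^B(\pi_B(f_B))=\pi_C\bigl(\alpha^A(f_A)\alpha^B(f_B)\bigr)=0,
\]
while $\pi_A(f_A)\odot\pi_B(f_B)\neq 0$ in any \cstar-norm completion, contradicting injectivity. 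This orthogonalisation of the two axes' block supports, rather than any analogue of Theorem~\ref{thm:dependingone}, is the technical heart you are missing; your preparatory reductions are compatible with it, but without it the argument is incomplete.
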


\begin{proof}
The proof does not rely on the specific norm, but only on the fact that both $\mathcal Q(A)$ and $\mathcal Q(B)$ inject unitally into the algebraic tensor product and that $f\odot g=0$ if and only if $f=0$ or $g=0$, therefore we omit all references to the specific norm and stick to the minimal norm. 

We argue by contradiction, and assume that there is an injective $^*$-homomorphism $\Phi \colon \mathcal{Q}(A)\otimes \mathcal{Q}(B) \to \mathcal{Q}(C)$. We let $(e^A_n)$, $(e^B_n)$ and $(e^C_n)$ be good approximate identities for $A$, $B$, and $C$ respectively.

Let 
\[
\Phi^A=\Phi\restriction \mathcal Q(A)\otimes 1\colon \mathcal Q(A)\to\mathcal Q(C)\text{ and }
\Phi^B=\Phi\restriction 1\otimes Q(B)\colon \mathcal Q(B)\to\mathcal Q(C).
\]
Since both $\Phi^A$ and $\Phi^B$ are injective, applying the $\ncwep$ to $\Phi^A$ and $\Phi^B$ we get that the projection $p$ equals $\Phi(1)$, and so $\Phi^A_p=\Phi^A$ and $\Phi^B_p=\Phi^B$. Let now $\bar I=(I_n)$ be a sparse sequence of intervals as in Notation~\ref{notation2}. Sticking to Notation~\ref{notation2}, we can construct the sets $\mathcal D^A(\bar I)=\prod \mathcal D^A(\bar I)_n$ and $\mathcal D^B(\bar I)=\prod \mathcal D^B(\bar I)_n$ with the following properties: there are intervals $[j_n,k_n]$ and $[j_n',k_n']$ with $\lim j_n=\lim j_n'=\infty$ such that 
\[
\mathcal D^A(\bar I)_n\subseteq (e^C_{k_n}-e^C_{j_n})C(e^C_{k_n}-e^C_{j_n})\text{ and }\mathcal D^B(\bar I)_n\subseteq (e^C_{k'_n}-e^C_{j'_n})C(e^C_{k'_n}-e^C_{j'_n})
\]
and there are functions $\alpha^B_n\colon\mathcal F(\bar I)_n\to\mathcal D^A(\bar I)_n$ and $\alpha^B_n\colon\mathcal F(\bar I)_n\to\mathcal D^B(\bar I)_n$ such that  $\alpha^A=\prod\alpha^A_n \text{ and }\alpha^B=\prod\alpha^B_n$ lift $\Phi^A$ and $\Phi^B$ on $\mathcal F^A(\bar I)$ and $\mathcal F^B(\bar I)$ respectively. Let $(n_\ell)$ and $(m_\ell)$ be increasing sequences of natural numbers such that 
\[
j_{n_\ell}<k_{n_\ell}<j'_{m_\ell}<k'_{m_\ell}<j_{n_\ell+1}<k_{n_\ell+1}<j'_{m_\ell+1}<k'_{m_\ell+1}
\]
for all $\ell\in \N$. Note that $(\prod_\ell \mathcal D^A(\bar I)_{n_\ell})(\prod_\ell \mathcal D^B(\bar I)_{m_\ell})=0$. Pick $f_A\in \mathcal F^A(\bar I)$ supported on $\bigcup_\ell I_{n_\ell}$ and $f_B\in \mathcal F^B(\bar I)$ supported on $\bigcup_\ell I_{m_\ell}$ be two elements such that $1=\norm{\pi_A(f_A)}=\norm{\pi_B(f_B)}$. Then 
\[
\Phi(\pi_A(f_A)\odot \pi_B(f_B))=\Phi^A(\pi_A(f_A))\Phi^B(\pi_B(f_B))=\pi_C(\alpha^A(f_A)\alpha^B(f_B)).
\]
Since $\Phi$ is injective, $\Phi(\pi_A(f_A)\odot \pi_B(f_B))$ has norm $1$, but since $f_A\in \prod_\ell \mathcal D^A(\bar I)_{n_\ell}$ and $f_B\in \prod_\ell \mathcal D^B(\bar I)_{m_\ell}$ we have that $\pi_C(\alpha^A(f_A)\alpha^B(f_B))=0$. This contradiction concludes the proof.
\end{proof}
The following extends Theorem 1.2(1) in \cite{vaccaro2019trivial}.

\begin{corollary}\label{cor:tensors}
Assume $\OCA$ and $\MA$. Let $A$ be a separable nonunital \cstar-algebra. The class of \cstar-algebras embedding into $\mathcal Q(A)$ is not closed under minimal/maximal tensor products.
\end{corollary}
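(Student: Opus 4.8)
The plan is to obtain the corollary directly from Proposition~\ref{prop:embeddingstensor} by taking the ``diagonal'' instance in which all three algebras are $A$ itself. First I would note that $\mathcal Q(A)$ is a member of the class of \cstar-algebras embedding into $\mathcal Q(A)$: since $A$ is nonunital and separable, its multiplier algebra is strictly larger than $A$, so $\mathcal Q(A)=\mathcal M(A)/A$ is a nonzero unital \cstar-algebra, and the identity map is an injective (unital) $^*$-homomorphism $\mathcal Q(A)\to\mathcal Q(A)$. Thus $\mathcal Q(A)$ lies in the class, and both tensor factors of $\mathcal Q(A)\otimes_\alpha\mathcal Q(A)$ embed into $\mathcal Q(A)$.

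Next I would apply Proposition~\ref{prop:embeddingstensor} with $B=C=A$: assuming $\OCA$ and $\MA$, for any \cstar-norm completion $\otimes_\alpha$ of $\mathcal Q(A)\odot\mathcal Q(A)$ there is no injective $^*$-homomorphism $\mathcal Q(A)\otimes_\alpha\mathcal Q(A)\to\mathcal Q(A)$. Specialising $\otimes_\alpha$ to the minimal (respectively maximal) tensor product, the \cstar-algebra $\mathcal Q(A)\otimes_{\min}\mathcal Q(A)$ (respectively $\mathcal Q(A)\otimes_{\max}\mathcal Q(A)$) does not embed into $\mathcal Q(A)$, even though each of its two tensor factors does. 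This is precisely the failure of closure of the class under minimal/maximal tensor products, giving Corollary~\ref{cor:tensors}; it recovers Theorem~1.2(1) of \cite{vaccaro2019trivial} upon taking $A=\mathcal K$, for which $\mathcal Q(A)=\mathcal Q(H)$ is the Calkin algebra.

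Since the entire content of the statement is carried by Proposition~\ref{prop:embeddingstensor}, there is no real obstacle at this stage. The only point deserving a moment's attention is the non-triviality of the diagonal instance: one must check that $\mathcal Q(A)$ is nonzero and unital, so that $\mathcal Q(A)\otimes_\alpha\mathcal Q(A)$ genuinely contains two commuting unital copies of $\mathcal Q(A)$ and Proposition~\ref{prop:embeddingstensor} is not applied vacuously; this is immediate from $A$ being nonunital and separable.
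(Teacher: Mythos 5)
Your argument is correct and is exactly the paper's proof: apply Proposition~\ref{prop:embeddingstensor} with $B=C=A$ to see that $\mathcal Q(A)\otimes_\alpha\mathcal Q(A)$ cannot embed into $\mathcal Q(A)$ for any tensor norm, while $\mathcal Q(A)$ trivially embeds into itself. The extra remarks about nontriviality of the diagonal instance are fine but not needed.
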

\begin{proof}
$\mathcal Q(A)$ embeds into $\mathcal Q(A)$, yet under $\OCA$ and $\MA$ Proposition~\ref{prop:embeddingstensor} shows that $\mathcal Q(A)\otimes_\gamma\mathcal Q(A)$ cannot embed into $\mathcal Q(A)$ independently on the tensor norm $\gamma$.
\end{proof}

The thesis of Corollary~\ref{cor:tensors} fails under $\CH$, as Parovicenko's theorem (in the commutative setting) or the main result of \cite{farah2017calkin} show that $\ell_\infty/c_0$ and $\mathcal Q(H)$ are injectively universal for the class of abelian \cstar-algebras (resp., all \cstar-algebras) of density $\leq\aleph_1$.
\bibliographystyle{amsplain}
\bibliography{bibliography}
\end{document}